\newcounter{stepnb}
\newtheorem{theorem}{Theorem}%[section]
\newtheorem{lemma}[theorem]{Lemma}
\newtheorem{proposition}[theorem]{Proposition}
\theoremstyle{plain} \newtheorem{corollary}[theorem]{Corollary}
\newtheorem{definition}{Definition}[section]
\theoremstyle{remark}
\newtheorem{remark}[theorem]{Remark}
\theoremstyle{plain} 
\newcommand{\N}{\mathbb{N}}
\newcommand{\Z}{\mathbb{Z}}
\newcommand{\R}{\mathbb{R}}
\newcommand{\Q}{\mathbb{Q}}
\newcommand{\M}{{\mathcal M}}
\newcommand{\D}{{\mathcal D}}
\newcommand{\G}{{\mathcal G}}
\newcommand{\F}{\mathscr{F}}
\newcommand{\Id}{{\mathrm I}}
\renewcommand{\H}{{\mathscr H}}
\renewcommand{\L}{{\mathscr L}}
\renewcommand{\S}{\mathbb{S}}
\renewcommand{\P}{\mathcal{P}}
\DeclareMathOperator{\BV}{BV}
\newcommand{\e}{\varepsilon}
\newcommand{\TV}{\text{\rm TotVar}}
\newcommand{\Lip}{\mathrm{Lip}}
\newcommand{\be}{\begin{equation}}
\newcommand{\eq}{\end{equation}}
\renewcommand{\div}{{\rm div}\,}
\newcommand{\loc}{\mathrm{loc}}
\newcommand{\dist}{\mathrm{dist}}
\newcommand{\diam}{\mathrm{diam}}
\newcommand{\ac}{\mathrm{ac}}
\newcommand{\n}{\mathbf{n}}
\title{Rectifiability of entropy defect measures in a micromagnetics model}
\author[E.~Marconi]{Elio Marconi}
\address{Elio Marconi, EPFL B, Station 8, CH-1015 Lausanne, CH.}
\email{elio.marconi@epfl.ch}
\thanks{The author has been supported by the SNF Grant 182565.}
\begin{document}
\maketitle

%\tableofcontents

\begin{abstract}
We study the fine properties of a class of weak solutions $u$ of the eikonal equation arising as asymptotic domain of a family of energy functionals  introduced in (Rivi\`ere T, Serfaty S. Limiting domain wall energy
for a problem related to micromagnetics. Comm Pure Appl Math 2001; 54(3):294-338).
In particular we prove that the entropy defect measure associated to $u$ is concentrated on a 1-rectifiable set, which detects the jump-type discontinuities of $u$.
\end{abstract}

\section{Introduction}
We consider a bounded simply connected domain $\Omega \subset \R^2$ and we investigate the fine properties of the following class of divergence free unit vector fields:
\begin{definition}\label{D_M_div}
We denote by $\mathcal M_\div (\Omega)$ the set of vector fields $u: \Omega \to \mathbb{C}$ for which the following conditions hold:
\begin{enumerate}
\item $\div u = 0$ in the sense of distributions;
\item there exists $\phi \in L^\infty (\Omega)$ such that $u = e^{i\phi}$ and 
\begin{equation*}
\langle U_\phi, \psi(x,a) \rangle := \int_{\Omega \times \R} e^{i(\phi(x) \wedge a)}\cdot \nabla_x \psi(x,a) dxda \in \M(\Omega\times \R),
\end{equation*}
where $\M(\Omega\times \R)$ denotes the set of finite Radon measures on $\Omega \times \R$.
\end{enumerate}
\end{definition}

The space $\M_\div(\Omega)$ is the conjectured asymptotic domain as $\e\to 0$ of the following family of energy functionals introduced in \cite{RS_magnetism} in the context of micro-magnetics:
\begin{equation*}
E_\e (u) := \e \int_\Omega |\nabla u|^2 + \frac{1}{\e}\int_{\R^2}|H_u|^2,
\end{equation*}
where $u \in W^{1,2}(\Omega,\S^1)$ and the so-called demagnetizing field $H_u \in L^2(\R^2;\R^2)$ is such that
$\mathrm{curl}\, H_u=0$ and $\div (\tilde u + H_u)=0$ in $\mathcal D'(\R^2)$, where 
\begin{equation*}
\tilde u (x) =
\begin{cases}
u (x) & \mbox{if }x \in \Omega ; \\
0 & \mbox{otherwise}.
\end{cases}
\end{equation*}
The following compactness result was proven in \cite{RS_magnetism2}: let $\phi_{\e_n}$ be a bounded sequence in $L^\infty(\Omega)$ such that $E_{\e_n} (u_{\e_n})$ is uniformly bounded, where $u_{\e_n}=e^{i\phi_{\e_n}}$ and $\e_n\to 0$;
then $\phi_{\e_n}$ is relatively compact in $L^p(\Omega)$ for every $p\in [1,\infty)$ and for every limit point $\bar \phi$ it holds
\begin{equation}\label{E_liminf}
e^{i\bar \phi}\in \M_\div(\Omega)\qquad \mbox{and} \qquad |U_{\bar \phi}|(\Omega \times \R)\le \liminf_{n\to \infty}E_{\e_n}(u_{\e_n}).
\end{equation}
Although the $\Gamma$-$\liminf$ inequality \eqref{E_liminf} was proved in full generality, the corresponding $\Gamma$-$\limsup$ inequality was obtained only in special cases. In particular the energy-minimizing configurations were characterized by the results in \cite{RS_magnetism2,ALR_viscosity}.
It is expected that the energy $E_\e$ is concentrated on lines at a scale $\e>0$ around the lines, allowing for sharper and sharper jumps as $\e\to 0$; the latters correspond in three dimensions to jumps across surfaces, called \emph{domain walls} in the theory of micromagnetism (see \cite{RS_magnetism}).
These lines are detected by the measure $U_\phi$: in particular if we denote by $p_x:\Omega\times \R$ the standard projection on the first component and if $\phi \in \BV(\Omega)$, then the measure 
\begin{equation*}
\nu:= (p_x)_\sharp |U_\phi|
\end{equation*}
is concentrated on the 1-rectifiable jump set of $\phi$.

However, vector fields in $\M_\div(\Omega)$ do not have necessarily bounded variation and a study of their fine properties must therefore
be independent of the theory of $\BV$ functions. This program was announced in \cite{ALR_viscosity} and carried on in \cite{AKLR_rectifiability} leading to the following result:

\begin{theorem}\label{T_intro}
Let $\phi$ be a lifting of $u\in \M_\div(\Omega)$ as in Definition \ref{D_M_div}. Then 
\begin{enumerate}
\item The jump set $J$ of $\phi$ is countably $\H^1$-rectifiable and coincides, up to $\H^1$-negligible sets, with
\begin{equation}\label{E_Sigma}
\Sigma:= \left\{ x \in \Omega: \limsup_{r\to 0}\frac{\nu(B_r(x))}{r}>0 \right\}.
\end{equation}
Moreover for every $a\in \R$ it holds
\begin{equation}\label{E_formula}
\div (e^{i\phi\wedge a})\llcorner J = \mathbf{1}_{\phi^-<a<\phi^+}\left(e^{ia}- e^{i\phi^-}\right)\cdot \n_J \H^1\llcorner J,
\end{equation}
where $\n_J$ denotes the normal to $J$.
\item Every $x \in \Omega \setminus \Sigma$ is a vanishing mean oscillation point of $\phi$, namely
\begin{equation*}
\lim_{r \to 0}\frac{1}{r^2}\int_{B_r(x)}|\phi- \phi_r(x)| = 0,
\end{equation*}
where $\phi_r(x)$ is the average of $\phi$ on $B_r(x)$.
\item The measure $\nu \llcorner (\Omega \setminus J)$ is orthogonal to $\H^1$, namely
\begin{equation*}
B \subset (\Omega \setminus J) \mbox{ Borel with }\H^1(B)< \infty \quad \Longrightarrow \quad \nu(B)=0.
\end{equation*}
\end{enumerate}
\end{theorem}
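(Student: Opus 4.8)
The plan is to run a blow-up analysis for $\phi$ and for the measure $\nu$ simultaneously. As a preliminary, I would record the elementary consequences of the finiteness of $\nu$: by the Besicovitch covering theorem, for every $t>0$ the set $\Sigma_t:=\{x\in\Omega:\limsup_{r\to0}\nu(B_r(x))/r\ge t\}$ satisfies $\H^1(\Sigma_t)\le C\,t^{-1}\nu(\Omega)$, so that $\Sigma=\bigcup_{k\ge1}\Sigma_{1/k}$ is $\sigma$-finite with respect to $\H^1$, and at $\H^1$-a.e.\ $x_0\in\Sigma$ the upper and lower $1$-densities of $\nu$ are positive, finite and comparable, with all tangent measures of $\nu$ multiples of $\H^1\llcorner\ell$ for a line $\ell$ through the origin. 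The analytic input is a compactness property for $\M_\div$ under the natural rescaling $\phi_{x_0,r}(y):=\phi(x_0+ry)$: a change of variables gives $|U_{\phi_{x_0,r}}|(B_1\times\R)=\nu(B_r(x_0))/r$, which is bounded along suitable sequences $r_k\downarrow0$; combining this bound with the $L^1_{\loc}$-compactness for $\M_\div$ available from \cite{RS_magnetism2,AKLR_rectifiability} one extracts a limit $\bar\phi$, with $\phi_{x_0,r_k}\to\bar\phi$ in $L^1_{\loc}(\R^2)$, $\bar\phi$ in $\M_\div$ on every ball, and $|U_{\bar\phi}|\le\liminf|U_{\phi_{x_0,r_k}}|$ (a direct consequence of $L^1_{\loc}$-convergence and weak-$*$ lower semicontinuity of the total variation).

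The core of the proof is a rigidity statement for these blow-up limits: \emph{at $\H^1$-a.e.\ $x_0\in\Sigma$ the blow-up is unique and equals a single planar jump}, i.e.\ there are a unit vector $\n$ and constants $\bar\phi^-\ne\bar\phi^+$ with $\bar\phi=\bar\phi^-$ on $\{y\cdot\n<0\}$ and $\bar\phi=\bar\phi^+$ on $\{y\cdot\n>0\}$. I would prove this through the kinetic reformulation: writing $h(x,a):=e^{i(\phi(x)\wedge a)}$, one has $h(x,a)=e^{ia}$ on $\{\phi>a\}$ and $h(x,a)=e^{i\phi(x)}=u(x)$ on $\{\phi<a\}$, so that $\div_x h$ vanishes on both regions and $U_\phi=\div_x h$ is concentrated on the graph $\{a=\phi(x)\}$; it is thus a family, indexed by the kinetic variable $a$, of jump-type divergences travelling along the characteristic directions $e^{ia}$. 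In the blow-up, $U_{\bar\phi}$ is nonzero and its $x$-marginal is a tangent measure of $\nu$ at $x_0$, hence a multiple of $\H^1\llcorner\ell$ for a line $\ell$ through the origin; the kinetic structure then forces $\bar h(\cdot,a)$ to be constant on each half-plane cut out by $\ell$ for a.e.\ $a$, which is exactly the claimed single jump. The delicate point — and the step I expect to be the main obstacle — is precisely this passage to the limit: one must show that the finiteness of the defect measure prevents the emergence of new concentration or oscillation (a fan of infinitely many jumps accumulating at $x_0$, or a vortex-type singularity, the latter confined to an $\H^1$-negligible set), and one needs the transversal rigidity of the eikonal equation to upgrade ``the defect lives on the line $\ell$'' to ``$\bar\phi$ is constant on either side of $\ell$''. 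Symmetrically, a point $x_0\in\Omega$ at which \emph{all} blow-ups are constant is, by definition, a point of $\Omega\setminus\Sigma$: there $\nu(B_r(x_0))/r\to0$, so $|U_{\bar\phi}|(B_R)=0$ for every $R$, hence $\bar\phi$ is an entire zero-defect solution, hence constant by the Jabin–Otto–Perthame-type classification. A covering and contradiction argument then converts ``constant blow-up'' into the quantitative vanishing-mean-oscillation bound in item (2), and into the orthogonality in item (3): if $\nu$ charged a Borel set $B\subset\Omega\setminus J$ with $\H^1(B)<\infty$, a point of positive $\nu\llcorner B$-density would have a non-constant blow-up and hence would lie in $J$.

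Granted the rigidity, the remaining assertions follow. At $\H^1$-a.e.\ $x_0\in\Sigma$ the line $\ell(x_0)$ produced by the blow-up is an approximate tangent line to $\Sigma$; since $\H^1\llcorner\Sigma$ is $\sigma$-finite, the classical rectifiability criterion (a $\sigma$-finite $\H^1$-set possessing an approximate tangent line $\H^1$-a.e.\ is countably $1$-rectifiable) shows that $\Sigma$, and hence $J$, is rectifiable. The equality $J=\Sigma$ modulo $\H^1$-null sets is then immediate: at a point of $\Sigma$ the unique single-jump blow-up provides the one-sided $L^1$-traces $\phi^\pm$ and the normal $\n_J$, so $x_0\in J$, while at a jump point the blow-up is genuinely non-constant and one computes directly that its defect has positive mass on the tangent line, so $\nu$ has positive $1$-density there. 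Finally, formula \eqref{E_formula} is obtained by evaluating $\div(e^{i\phi\wedge a})$ near a point of $J$ by means of the blow-up: for $a\le\phi^-$ the field $e^{i(\phi\wedge a)}$ is, near $J$, the constant $e^{ia}$ and contributes nothing; for $a\ge\phi^+$ it coincides with $e^{i\phi}=u$, which is divergence free, and again contributes nothing (here one uses that $\div u=0$ forces $(e^{i\phi^+}-e^{i\phi^-})\cdot\n_J=0$ on $J$); and for $\phi^-<a<\phi^+$ it jumps from $e^{i\phi^-}$ to $e^{ia}$ across $J$, producing the term $(e^{ia}-e^{i\phi^-})\cdot\n_J\,\H^1\llcorner J$. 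Collecting the three cases yields the factor $\mathbf{1}_{\phi^-<a<\phi^+}$ in \eqref{E_formula}.
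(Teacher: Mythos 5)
The statement you are proving is Theorem~\ref{T_intro}, which the paper does \emph{not} prove: it is quoted verbatim from Ambrosio--Kirchheim--Lecumberry--Rivi\`ere \cite{AKLR_rectifiability} and then used as a black box (for instance to infer $\nu(\partial B_R)=0$ in Lemma~\ref{L_chi12}, and to pass from non-VMO to $\bar x\in\Sigma$ in Proposition~\ref{P_j}). So there is no ``paper's own proof'' to compare against. What can be said is that your blow-up/rigidity strategy is indeed the one followed in \cite{AKLR_rectifiability} and in the parallel eikonal analysis of De Lellis--Otto \cite{DLO_JEMS}, with the kinetic reformulation \eqref{E_kinetic} in the role you assign it; so as a strategy outline it points in the right direction.

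That said, two of your ``preliminary elementary consequences'' claim more than Besicovitch gives, and this is not a cosmetic issue. From $\limsup_{r\to0}\nu(B_r(x))/r>0$ and $\nu$ finite, the covering argument gives only $\sigma$-finiteness of $\H^1\llcorner\Sigma$ and the upper density bound $\H^1(\Sigma_t)\lesssim t^{-1}\nu(\Omega)$. It does \emph{not} give positivity of the lower $1$-density at $\H^1$-a.e.\ point of $\Sigma$, nor that tangent measures are multiples of $\H^1$ restricted to a line: flatness of tangent measures is precisely what the PDE rigidity has to deliver, and lower density bounds in \cite{AKLR_rectifiability} come from a quantitative geometric lemma (essentially a no-crossing/monotonicity argument for characteristics), not from measure-theoretic generalities. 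As written, your sketch uses these unproved facts at the very start and then again when deducing that the $x$-marginal of $U_{\bar\phi}$ is $\H^1\llcorner\ell$, which is circular.

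The second gap is the one you yourself flag: the rigidity step ``the defect lives on a line $\Rightarrow$ $\bar\phi$ is a single planar jump'' is stated but not proved, and it is the mathematical core of the theorem. In particular you need (i) a classification of entire zero-defect solutions of the kinetic equation (the Liouville/Jabin--Perthame input, which gives item~(2) \emph{pointwise}, not merely a.e., since $x\notin\Sigma$ makes the rescaled defects vanish), and (ii) a transversality/no-crossing argument along characteristics to preclude fan-like or vortex-like accumulation, so that the blow-up limit has at most one jump line. Neither is supplied. Without them the passage from ``tangent measure is flat'' to ``$J=\Sigma$ up to $\H^1$-null sets and \eqref{E_formula} holds'' does not close, and item~(3) --- which is not just a.e.\ orthogonality but orthogonality of $\nu\llcorner(\Omega\setminus J)$ to \emph{every} $\sigma$-finite $\H^1$-set --- needs the quantitative density lower bound on $\Sigma$ that you have not established. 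So: right strategy, incomplete execution, with the two named gaps being exactly where \cite{AKLR_rectifiability} invests its technical work.
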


We observe that for functions $\phi\in \BV_\loc(\Omega)$ the above properties (2) and (3) can be improved to
\begin{enumerate}
\item[(2')] $\H^1$-a.e. point in $\Omega\setminus J$ is a Lebesgue point of $\phi$;
\item[(3')] the measure $\nu\llcorner (\Omega \setminus J)$ is identically 0.
\end{enumerate} 
In \cite{AKLR_rectifiability} it was conjectured that both (2') and (3') hold for every $u\in \M_\div(\Omega)$.
The following weaker version of (2') was recently obtained  in \cite{LO_Burgers} in the close setting of weak solutions $u$ with finite entropy production of the Burgers equation:
\begin{enumerate}
\item[(2*)] the set of non Lebesgue point of $u$ has Hausdorff dimension at most 1.
\end{enumerate}
This result was extended for general conservation laws in \cite{M_Lebesgue}, implying in particular that Property (2*) holds in the setting of this paper, namely for functions $\phi \in L^\infty$ corresponding to vector fields $u\in \M_\div(\Omega)$.

The main result of this paper is the proof of property (3') for general vector fields $u \in \M_\div(\Omega)$.

\begin{theorem}\label{T_main}
Let $\phi$ be a lifting of $u\in \M_\div(\Omega)$ as in Definition \ref{D_M_div}. Then the measure $\nu$ is concentrated on 
the countably $\H^1$-rectifiable set $\Sigma$ defined in \eqref{E_Sigma}. In particular for every $a\in \R$ it holds
\begin{equation*}
\div (e^{i\phi\wedge a}) = \mathbf{1}_{\phi^-<a<\phi^+}\left(e^{ia}- e^{i\phi^-}\right)\cdot \n_J \H^1\llcorner J.
\end{equation*}
\end{theorem}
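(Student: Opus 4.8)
The plan is to show that the measure $\nu = (p_x)_\sharp|U_\phi|$ gives no mass to any Borel set $B \subset \Omega \setminus J$ with $\H^1(B) < \infty$, which by Theorem \ref{T_intro}(1) and (3) is precisely the content of (3'). Since Theorem \ref{T_intro} already identifies $\Sigma$ with $J$ up to $\H^1$-null sets and establishes the structure formula \eqref{E_formula} on $J$, the remaining task is purely the "diffuse" part: ruling out a singular, $\H^1$-absolutely-continuous-scale contribution of $\nu$ on $\Omega\setminus J$. The natural strategy is a blow-up / density argument at $\H^1$-a.e. point of such a set $B$, combined with the rigidity of entropy-admissible vector fields.

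\medskip

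First I would set up the local structure at a point $x_0 \in \Omega\setminus\Sigma$. By Theorem \ref{T_intro}(2), every such point is a vanishing-mean-oscillation point of $\phi$, so along a blow-up sequence $r_k \to 0$ the rescaled liftings $\phi_{x_0,r_k}(y) := \phi(x_0 + r_k y)$ converge in $L^1_{\loc}(\R^2)$ to the constant $\bar\phi(x_0)$. Correspondingly, using the entropy-measure bound $|U_\phi| \in \M(\Omega\times\R)$, one extracts a limiting vector field that still lies (after rescaling) in a suitable local version of $\M_\div$, but whose lifting is constant — hence the limiting entropy defect measure vanishes. The key quantitative input is that the rescaled entropy productions $r_k^{-1}|U_\phi|(B_{r_k}(x_0)\times\R)$ control, via the kinetic/entropy formulation used in \cite{AKLR_rectifiability,M_Lebesgue}, the oscillation of $\phi$ at scale $r_k$; so if $x_0$ were a point where $\limsup_k r_k^{-1}\nu(B_{r_k}(x_0)) > 0$ one would contradict the VMO property unless the mass sits on a genuine jump, which is excluded since $x_0 \notin \Sigma$. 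Thus at $\H^1$-a.e. $x_0 \in B$ one obtains $\lim_{r\to 0} \nu(B_r(x_0))/r = 0$.

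\medskip

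Next I would upgrade this pointwise density bound to the measure-theoretic conclusion $\nu(B) = 0$. This is the standard step: one covers $B$ efficiently by small balls using the Besicovitch or Vitali covering lemma, and invokes the comparison between a measure and Hausdorff measure via upper densities — namely, if $\limsup_{r\to 0}\nu(B_r(x))/r = 0$ for $\nu$-a.e. $x \in B$ and $\H^1(B) < \infty$, then $\nu(B) = 0$ (a Radon–Nikodym / density-bound argument, e.g. as in the differentiation theory for measures). Since by Theorem \ref{T_intro}(3) we already know $\nu\llcorner(\Omega\setminus J)$ is orthogonal to $\H^1$, the content added here is that the \emph{diffuse} part has vanishing upper $\H^1$-density everywhere off $\Sigma$, and this forces it to vanish. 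Combining $\nu\llcorner J = \H^1$-rectifiable part (from Theorem \ref{T_intro}(1)) with $\nu\llcorner(\Omega\setminus J) = 0$ gives that $\nu$ is concentrated on the rectifiable set $\Sigma$, and then \eqref{E_formula} globalizes to the stated identity on all of $\Omega$.

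\medskip

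The main obstacle I anticipate is the blow-up step: unlike the $\BV$ setting, a vector field in $\M_\div(\Omega)$ has no a priori modulus of continuity and the entropy defect measure can, in principle, concentrate along fractal-like sets, so one must carefully exploit the \emph{structure} of admissible entropies (the family $e^{i(\phi\wedge a)}$, equivalently the kinetic formulation with the $\mathbf{1}_{\phi^-<a<\phi^+}$ indicator) rather than mere $BV$-type compactness. Concretely, the difficulty is to show that any limiting defect measure of the blow-up is supported on a single line through the origin (a planar "shock") and that a non-jump point cannot sustain such a limit — this likely requires the fine analysis of \cite{M_Lebesgue} on the traces $\phi^{\pm}$ along Lipschitz curves and a quantitative Lipschitz-regularity estimate for the good part of $\phi$ away from $\Sigma$. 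Everything else — the covering argument, the passage from density bounds to $\nu(B)=0$, and the final globalization of \eqref{E_formula} — is routine once the blow-up rigidity is in hand.
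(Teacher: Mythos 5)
Your proposal, as stated, only reproves part (3) of Theorem~\ref{T_intro}, which is already known, and does not reach the target statement. The plan is to show $\nu(B)=0$ for every Borel $B\subset\Omega\setminus J$ with $\H^1(B)<\infty$; but this is \emph{verbatim} the orthogonality property (3) in Theorem~\ref{T_intro}, and is available as a hypothesis. What must be proved is the strictly stronger assertion $\nu(\Omega\setminus J)=0$, i.e.\ the measure gives no mass to $\Omega\setminus J$ at all, with no $\H^1$-finiteness restriction. The gap between the two is not cosmetic: a measure can be orthogonal to $\H^1$ in the sense of (3) and still be nonzero on a set of Hausdorff dimension strictly between $1$ and $2$, and the whole point of the theorem is to exclude exactly this kind of ``fractal'' concentration of the entropy defect.

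There is a second, related problem with the blow-up step. At every $x_0\in\Omega\setminus\Sigma$ the conclusion you aim for, $\limsup_{r\to0}\nu(B_r(x_0))/r=0$, is true \emph{by definition} of $\Sigma$ in \eqref{E_Sigma} — no blow-up is needed, and no new information is extracted. The genuine obstruction is that vanishing upper $1$-density of $\nu$ at every point of a set $E$ does \emph{not} imply $\nu(E)=0$ unless $E$ is $\H^1$-$\sigma$-finite (otherwise there is no Vitali/Besicovitch comparison with a $\sigma$-finite reference measure to invoke). Your density-upgrading step therefore cannot close the argument. The paper's proof circumvents this by not arguing via densities and Hausdorff comparison at all: it constructs a Lagrangian representation of the hypo- and epigraph (Section~\ref{S_Lagrangian}), pairs the two representations into a transport plan $\pi^-$ concentrated on a coincidence set $\G$ (Lemma~\ref{L_pairing}), splits $\G$ into a ``jump'' part and finitely many ``localized'' parts $\G^-_{h,l}\times\G^+_{e,l}$, and shows (i) the localized parts push forward onto a countable union of Lipschitz shock graphs $C_{f_{\bar x,l}}$ via a non-crossing principle (Proposition~\ref{P_no_crossing}, Corollary~\ref{C_no_crossing}, Proposition~\ref{P_l}), hence are $1$-rectifiable, and (ii) at $\nu^-_{\mathrm{jump}}$-a.e.\ point the dichotomy of Lemma~\ref{L_density}--\ref{L_density2} contradicts VMO unless the point already lies in $\Sigma$ (Proposition~\ref{P_j}). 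In this way concentration on a rectifiable set is produced \emph{directly}, never passing through a density-zero-implies-measure-zero step. To salvage your approach you would at minimum need a mechanism that excludes intermediate-dimensional mass, which is precisely the extra structure the paper extracts from the Lagrangian representation.
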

Theorem \ref{T_main} establishes that the concentration property expected for the $\Gamma$-limit functional of $E_\e$ as $\e\to 0$ holds for the candidate $\Gamma$-limit; this property is also considered as a fundamental step to complete the $\Gamma$-$\limsup$ analysis (see \cite{Lecumberry_magnetic}).

\subsection{Main tool and strategy of the proof}
The strategy of the proof of Theorem \ref{T_main} was introduced in \cite{M_Burgers} to prove the analogous result for weak solutions with finite entropy 
production of Burgers equation (or more in general 1d scalar conservation laws with uniformly convex flux). Indeed there is a strong analogy between weak solutions to conservation laws with finite entropy 
production and the solutions to the eikonal equation arising in this model or the related model introduced by Aviles and Giga in \cite{AG_conjecture}. 
In particular Theorem \ref{T_intro} has an analogous version for scalar conservation laws (see \cite{Lecumberry_PhD,DLW_structure}) and for the model by 
Aviles and Giga \cite{DLO_JEMS}. In order to compare the setting of this paper and the one of conservation laws we observe that for $u=e^{i\phi}\in \M_\div(\Omega)$ it holds
\begin{equation*}
\partial_{x_1}cos \phi + \partial_{x_2}\sin \phi=0.
\end{equation*}
Let us assume that $\phi$ takes values in $(0,\pi)$ so that the cosine is invertible in the range of $\phi$ and $v=\cos \phi$ satisfies the equation
\begin{equation*}
\partial_{x_1}v + \partial_{x_2}(\sin(\cos^{-1}(v)))=0.
\end{equation*}
Being the map $\sin \circ \cos^{-1}$ convex on $(-1,1)$, it is possible to transfer the results obtained for conservation laws with convex 
fluxes to solutions of the eikonal equation taking values in $(0,\pi)$. 
When instead the oscillation of $\phi$ is larger than $\pi$, the approach above fails and more refined arguments are needed.
 
The main tool used to prove Theorem \ref{T_main} is the so called Lagrangian representation, which was introduced in \cite{BBM_multid} for entropy
solutions to general conservation laws and then extended in \cite{M_Lebesgue} to weak solutions with finite entropy production.
This Lagrangian representation (see Definition \ref{D_Lagr}) is an extension of the classical method of characteristics to this non-smooth setting and it is strongly inspired
by the Ambrosio's superposition principle in the context of positive measure valued solutions to the linear continuity equation.
Roughly speaking the evolution of the solution is obtained as superposition of single trajectories traveling with characteristic speed.
This tool is well suited for our purposes since also the kinetic measure $U_\phi$ can be decomposed along the characteristic trajectories detected by the Lagrangian representation.
In Section \ref{S_Lagrangian} we prove the existence of a Lagrangian representation for vector fields in $\M_\div(\Omega)$ building on the following kinetic formulation obtained in \cite{RS_magnetism2} (see also \cite{JP_kinetic} in the study of the model by Aviles and Giga and the fundamental paper \cite{LPT_kinetic} in the setting of entropy solutions to scalar conservation laws):
setting $\chi(x,a):=\mathbf{1}_{\phi(x)\ge a}$ it holds
\begin{equation}\label{E_kinetic}
i e^{ia}\cdot \nabla_x \chi = -\partial_a U_\phi \qquad \mbox{in }\mathcal D'(\Omega \times \R).
\end{equation}
The proof of the existence of a Lagrangian representation follows the strategy of \cite{M_Lebesgue}, but additional work is required since we consider here solutions on bounded domains instead of the whole $\R^2$.

Once a Lagrangian representation is available for vector fields in $\M_\div(\Omega)$, we implement the strategy introduced in \cite{M_Burgers} to prove Theorem \ref{T_main}. Being the oscillation of $\phi$ bigger than $\pi$ the argument does not apply straightforwardly. 
Still a partial result is obtained in Section \ref{Ss_nu_l} by covering the image of $\phi$ with finitely many intervals $(I_l)_{l=1}^L$ of length less than $\pi$ and appropriately localizing the argument of \cite{M_Burgers}.
A new regularity estimate is proven in Section \ref{Ss_nu_j} and this allows to conclude the proof of Theorem \ref{T_main}, relying on Theorem
\ref{T_intro}.

%This constitutes the main novelty of this work and is carried on in Section  
%The Lagrangian representation is efficient to describe the structure of the measure $U_\phi$ since the latter can be decomposed along the 
%characteristics detected by the Lagrangian representation. 
%

\section{Preliminaries}

\subsection{Duality for $L^1$-optimal transport}
In this section we recall a few facts about $L^1$-optimal transport. We state the results in the form that we will need in Section \ref{S_Lagrangian}. 
\begin{definition}
Let $(X,d)$ be a complete and separable metric space and let $\mu_1,\mu_2 \in \M_+(X)$ be such that $\mu_1(X)=\mu_2(X)$. The Wasserstein distance of order 1 between
$\mu_1$ and $\mu_2$ is defined by
\begin{equation}\label{E_W1}
W_1(\mu_1,\mu_2):=\inf_{\pi\in \Pi(\mu_1,\mu_2)}\int_Xd(x,y)d\pi(x,y),
\end{equation}
where $\Pi(\mu_1,\mu_2)$ is the set of transport plans from $\mu_1$ to $\mu_2$, i.e.
\begin{equation*}
\Pi(\mu_1,\mu_2):=\{\omega\in \mathcal \M_+(X^2): {\pi_1}_\sharp \omega = \mu_1, {\pi_2}_\sharp \omega = \mu_2 \},
\end{equation*}
denoting by $\pi_1,\pi_2:X^2 \to X$ the two natural projections.
\end{definition}

Notice that $W_1$ can take value $+\infty$.
%Let moreover
%\begin{equation}
%\mathcal P_1(X):=\left\{\mu \in \mathcal P(X):\exists x_0\in X\left(\int_X d(x,x_0)d\mu<\infty\right)\right\}
%\end{equation}
%be the set of probability with finite first moment.
%\end{definition}
%\begin{proposition}
%The function $W_1$ is a distance on $\mathcal P_1(X)$.
%\end{proposition}

In order to prove the existence of a Lagrangian representation for vector fields in $\M_\div(\Omega)$ we will take advantage of the dual formulation of the $L^1$-optimal transport. The following duality formula can be found for example in \cite{V_oldnew}.
\begin{proposition}\label{P_duality}
For any $\mu_1,\mu_2 \in \M_+(X)$ with $\mu_1(X)=\mu_2(X)$, it holds
\begin{equation*}
W_1(\mu_1,\mu_2)= \sup_{ \psi \in L^1(\mu_1), \|\psi\|_\Lip \le 1} \left( \int_X\psi d\mu_1-\int_X \psi d\mu_2 \right).
\end{equation*}
\end{proposition}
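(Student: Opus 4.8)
The plan is to prove the two inequalities separately: the bound $\mathrm{RHS}\le W_1$ is elementary, while $W_1\le\mathrm{RHS}$ is obtained from the Kantorovich duality for a metric cost together with a $c$-transform reduction.

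\emph{The inequality $\mathrm{RHS}\le W_1$.} Let $\psi\in L^1(\mu_1)$ with $\|\psi\|_\Lip\le1$, and let $\pi\in\Pi(\mu_1,\mu_2)$ with $\int_{X^2}d(x,y)\,d\pi<\infty$ (if no such $\pi$ exists, $W_1=+\infty$ and there is nothing to prove). From $\int d(x,y)\,d\pi<\infty$ and $\psi\in L^1(\mu_1)$, the bound $|\psi(y)|\le|\psi(x)|+d(x,y)$ gives $\psi\in L^1(\mu_2)$; since $|\psi(x)-\psi(y)|\le d(x,y)$ pointwise,
\[
\int_X\psi\,d\mu_1-\int_X\psi\,d\mu_2=\int_{X^2}\bigl(\psi(x)-\psi(y)\bigr)\,d\pi(x,y)\le\int_{X^2}d(x,y)\,d\pi(x,y).
\]
Taking the infimum over $\pi$ and then the supremum over $\psi$ yields $\mathrm{RHS}\le W_1$.

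\emph{The inequality $W_1\le\mathrm{RHS}$.} First I would establish the unconstrained Kantorovich duality for the cost $c(x,y)=d(x,y)$:
\[
W_1(\mu_1,\mu_2)=\sup\Bigl\{\int_X\varphi\,d\mu_1+\int_X\widetilde\varphi\,d\mu_2:\ \varphi,\widetilde\varphi:X\to\R\ \text{bounded continuous},\ \varphi(x)+\widetilde\varphi(y)\le d(x,y)\ \ \forall\,x,y\Bigr\}.
\]
This I would obtain by the Lagrangian/minimax argument: writing the marginal constraints in $W_1=\inf_{\pi\ge0}\int c\,d\pi$ through multipliers $\varphi,\widetilde\varphi$, exchanging $\inf_\pi\sup_{\varphi,\widetilde\varphi}$ with $\sup_{\varphi,\widetilde\varphi}\inf_\pi$, and noting that after the exchange the inner infimum over all nonnegative measures $\pi$ equals $0$ when $\varphi\oplus\widetilde\varphi\le c$ and $-\infty$ otherwise. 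Next I would reduce an admissible pair $(\varphi,\widetilde\varphi)$ to a single $1$-Lipschitz potential by the $c$-transform: set
\[
u(x):=\inf_{y\in X}\bigl(d(x,y)-\widetilde\varphi(y)\bigr).
\]
Since $d(x,y)-\widetilde\varphi(y)\ge-\sup_X\widetilde\varphi$, the infimum is finite and $u$, as an infimum of $1$-Lipschitz functions, is $1$-Lipschitz; from $\varphi(x)\le d(x,y)-\widetilde\varphi(y)$ for all $y$ one gets $\varphi\le u$, while the choice $y=x$ gives $u(x)\le-\widetilde\varphi(x)$. Hence $\varphi\le u\le-\widetilde\varphi$ pointwise, so $u$ is bounded, in particular $u\in L^1(\mu_1)$ ($\mu_1$ being finite), and
\[
\int_X\varphi\,d\mu_1+\int_X\widetilde\varphi\,d\mu_2\le\int_X u\,d\mu_1-\int_X u\,d\mu_2\le\sup_{\psi\in L^1(\mu_1),\,\|\psi\|_\Lip\le1}\Bigl(\int_X\psi\,d\mu_1-\int_X\psi\,d\mu_2\Bigr).
\]
Taking the supremum over admissible pairs and invoking the Kantorovich duality gives $W_1\le\mathrm{RHS}$, including when $W_1=+\infty$ (then the pair-supremum, hence the Lipschitz-supremum, is infinite). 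Combining both inequalities proves the proposition.

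\emph{Main obstacle.} The only genuinely non-trivial step is the inf–sup exchange in the proof of the Kantorovich duality: I would justify it by a minimax theorem, exploiting that the functional is affine in both variables and that the competitor plans form a weak-$*$ compact set, reducing first to compact $X$ via tightness of the finite Borel measures $\mu_1,\mu_2$ on the Polish space $X$ and then passing to the limit; alternatively this duality may simply be quoted from \cite{V_oldnew}. Everything else ($c$-transform reduction, integrability bookkeeping on possibly unbounded $X$, the degenerate case $W_1=+\infty$) is routine once one works with bounded potentials.
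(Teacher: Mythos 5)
The paper does not prove this proposition at all — it is stated as a known fact with a pointer to \cite{V_oldnew}, so there is no ``paper's proof'' to compare against line by line. What you have written out is the standard Kantorovich--Rubinstein duality argument, which is precisely what one finds in Villani's treatment: first the general Kantorovich duality for the cost $c=d$, then the $c$-transform reduction to a single $1$-Lipschitz potential. So the route is the same as the one the paper silently invokes; you have simply unpacked it.

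The details of your sketch are sound. The easy inequality with the integrability bookkeeping ($\psi\in L^1(\mu_2)$ via $|\psi(y)|\le|\psi(x)|+d(x,y)$ and a finite-cost plan) is handled correctly. The $c$-transform step is clean: $u=\inf_y\bigl(d(\cdot,y)-\widetilde\varphi(y)\bigr)$ is $1$-Lipschitz as an infimum of $1$-Lipschitz functions, $\varphi\le u\le-\widetilde\varphi$, hence $u$ is bounded and the pair $(\varphi,\widetilde\varphi)$ is dominated by $(u,-u)$; this is exactly the standard argument. You also correctly identify that the inner infimum over nonnegative $\pi$ in the Lagrangian reformulation is $0$ when $\varphi\oplus\widetilde\varphi\le c$ and $-\infty$ otherwise.

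One small imprecision worth flagging: when you motivate the inf--sup exchange by saying ``the competitor plans form a weak-$*$ compact set,'' that refers to $\Pi(\mu_1,\mu_2)$, but in the Lagrangian reformulation the inner infimum ranges over \emph{all} nonnegative measures, which is certainly not compact. The actual justification (Fenchel--Rockafellar, or a tightness/compact-exhaustion argument for Polish $X$) is more delicate than an off-the-shelf minimax theorem; this is exactly the reason the paper simply cites \cite{V_oldnew} rather than reproving it. You acknowledge this as ``the only genuinely non-trivial step'' and offer to quote it, which is the right call. With that caveat the proposal is correct.
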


Since it will be convenient to allow that the two measures $\mu_1,\mu_2$ have different masses,
we deduce from Proposition \ref{P_duality} the following result.
\begin{corollary}\label{C_different_masses}
Let $(X,d)$ be bounded and let $\mu_1,\mu_2\in \M_+(X)$. Assume that there exist $C_1,C_2>0$ such that for every $\psi \in \Lip(X)$ it holds
\begin{equation*}
\left| \int_X \psi d\mu_1 - \int_X \psi d \mu_2 \right| \le C_1 |\psi|_\Lip + C_2 \|\psi\|_{L^\infty}.
\end{equation*}
Then there exist $\tilde \mu_1\le \mu_1, \tilde \mu_2\le \mu_2$ such that 
$\|\mu_1-\tilde \mu_1\|\le C_2, \|\mu_2-\tilde \mu_2\|\le C_2$ and 
\begin{equation*}
W_1(\tilde \mu_1,\tilde \mu_2)\le C_1 + C_2 \diam(X).
\end{equation*}
\end{corollary}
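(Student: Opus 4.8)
The plan is to deduce Corollary \ref{C_different_masses} from Proposition \ref{P_duality} by an argument in the spirit of the Hahn--Banach theorem, compensating the mass defect between $\mu_1$ and $\mu_2$ before invoking the $L^1$-duality.

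\medskip

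\textbf{Step 1: Reduce to the equal-mass case by a signed-measure splitting.} Set $\sigma := \mu_1 - \mu_2 \in \M(X)$ and let $\sigma = \sigma^+ - \sigma^-$ be its Jordan decomposition. Testing the hypothesis with $\psi \equiv 1$ gives $|\sigma(X)| = |\mu_1(X) - \mu_2(X)| \le C_2$, i.e. $|\|\sigma^+\| - \|\sigma^-\|| \le C_2$. Without loss of generality assume $\|\sigma^+\| \ge \|\sigma^-\|$ (otherwise swap the roles of $\mu_1$ and $\mu_2$). The idea is to remove a piece of mass at most $C_2$ from $\mu_1$ so that the two remaining measures have equal mass; the natural candidate is to cut along the part of $\mu_1$ where $\sigma^+$ lives.

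\medskip

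\textbf{Step 2: Construct $\tilde\mu_1, \tilde\mu_2$.} Since $\mu_1 \ge \sigma^+$ (because $\sigma^+ \le \sigma^+ + \mu_2 \wedge \mu_1 \le \mu_1$, more directly $\sigma^+ = (\mu_1-\mu_2)^+ \le \mu_1$), we may write $\sigma^+ = \rho\, \mu_1$ with a Borel density $0 \le \rho \le 1$. Let $t \in [0,1]$ be chosen so that $\|t\, \sigma^+\| = \|\sigma^+\| - \|\sigma^-\| \le C_2$ (possible since the left side is continuous in $t$ and ranges over $[0,\|\sigma^+\|]$); in fact take $t\sigma^+$ of total mass exactly $\|\sigma^+\|-\|\sigma^-\|$. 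Define
\begin{equation*}
\tilde\mu_1 := \mu_1 - t\,\sigma^+ \le \mu_1, \qquad \tilde\mu_2 := \mu_2 \le \mu_2.
\end{equation*}
Then $\tilde\mu_1 \ge (1-t)\sigma^+ + (\mu_1 - \sigma^+) \ge 0$, the bounds $\|\mu_1 - \tilde\mu_1\| = \|t\sigma^+\| \le C_2$ and $\|\mu_2 - \tilde\mu_2\| = 0 \le C_2$ hold, and by construction $\tilde\mu_1(X) - \tilde\mu_2(X) = \sigma(X) - \|t\sigma^+\| = (\|\sigma^+\|-\|\sigma^-\|) - (\|\sigma^+\|-\|\sigma^-\|) = 0$, so $\tilde\mu_1(X) = \tilde\mu_2(X)$ and $W_1(\tilde\mu_1,\tilde\mu_2)$ is well defined.

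\medskip

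\textbf{Step 3: Estimate $W_1(\tilde\mu_1,\tilde\mu_2)$ via duality.} For any $\psi \in \Lip(X)$ with $|\psi|_\Lip \le 1$, since $X$ is bounded we may normalize $\psi$ by subtracting a constant so that $\|\psi\|_{L^\infty} \le \diam(X)$. Then
\begin{equation*}
\int_X \psi\, d\tilde\mu_1 - \int_X \psi\, d\tilde\mu_2 = \left(\int_X \psi\, d\mu_1 - \int_X \psi\, d\mu_2\right) - t\int_X \psi\, d\sigma^+ \le C_1 |\psi|_\Lip + C_2\|\psi\|_{L^\infty} + \|t\sigma^+\|\,\|\psi\|_{L^\infty},
\end{equation*}
and the choice $\|\psi\|_{L^\infty}\le\diam(X)$, $|\psi|_\Lip\le 1$, together with $\|t\sigma^+\|\le C_2$, bounds the right side by $C_1 + C_2\diam(X) + C_2\diam(X)$. (A cleaner accounting: write the difference as a telescoping through the equal-mass measures and absorb the $L^\infty$ terms; with care the constant comes out to $C_1 + C_2\diam(X)$ as stated, using that the $C_2\|\psi\|_{L^\infty}$ term and the correction term can both be routed through the same normalization.) Taking the supremum over such $\psi$ and applying Proposition \ref{P_duality} yields $W_1(\tilde\mu_1,\tilde\mu_2) \le C_1 + C_2\diam(X)$.

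\medskip

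\textbf{Main obstacle.} The delicate point is bookkeeping the constant: the hypothesis already produces a $C_2\|\psi\|_{L^\infty}$ term, and the mass-correction introduces another $O(C_2)$ contribution, so a naive estimate would give $C_1 + 2C_2\diam(X)$. Getting exactly $C_1 + C_2\diam(X)$ requires choosing the normalization of $\psi$ optimally (e.g. centering $\psi$ so that $\int \psi\, d\mu_2 = 0$ or so that $\min\psi = 0$, $\max\psi = \diam(X)$) and noting that in the equal-mass duality only $|\psi|_\Lip$ survives, so the $L^\infty$ terms are genuinely extra budget to be spent once. I would handle this by first replacing $\mu_2$ by $\tilde\mu_2$ chosen to soak up \emph{all} of the $C_2\|\psi\|_{L^\infty}$ slack at the level of measures (rather than only the mass defect), i.e. allowing both $\tilde\mu_1 < \mu_1$ and $\tilde\mu_2 < \mu_2$, which is exactly why the statement permits trimming both measures; this is the step where the flexibility in the corollary's formulation is essential.
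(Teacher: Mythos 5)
There is a genuine gap in Step 3 and it is not closed by the ``Main obstacle'' paragraph. Your Step 3 bounds $|-t\int\psi\,d\sigma^+|\le\|t\sigma^+\|\,\|\psi\|_{L^\infty}$ in absolute value, which inevitably yields $C_1+2C_2\diam(X)$ under any normalization of $\psi$; the hand-waved alternatives that follow (centering so that $\int\psi\,d\mu_2=0$, or ``trimming both measures'') are neither carried out nor obviously correct. What is missing is a \emph{sign} observation, not a clever normalization per se: since $\tilde\mu_1(X)=\tilde\mu_2(X)$ you may normalize $\psi$ so that $\psi\ge 0$ and $\|\psi\|_{L^\infty}\le\diam(X)$, and then the correction term $-t\int\psi\,d\sigma^+\le 0$ because $\sigma^+\ge 0$ and $\psi\ge 0$; so the duality quantity $\int\psi\,d\tilde\mu_1-\int\psi\,d\tilde\mu_2$ (which is what Proposition \ref{P_duality} requires you to bound from above, not its absolute value) is directly $\le C_1+C_2\diam(X)$. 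Without noting that the correction has a favorable sign, your argument does not produce the stated constant.

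For comparison, the paper avoids this bookkeeping entirely by a different and cleaner device: instead of removing mass from $\mu_1$, it \emph{adds} an atom $\alpha\delta_{\bar x}$ to $\mu_2$, where $\alpha=\|\mu_1\|-\|\mu_2\|$, to form $\bar\mu_2$ of equal mass; then it normalizes the test function so that $\psi(\bar x)=0$, whereupon the extra atom contributes exactly zero to $\int\psi\,d\bar\mu_2$, and the hypothesis applied to $\psi-\psi(\bar x)$ gives $W_1(\mu_1,\bar\mu_2)\le C_1+C_2\diam(X)$ immediately. The trimming happens at the very end: one takes an optimal plan between $\mu_1$ and $\bar\mu_2$ and restricts to the part whose second marginal is $\mu_2$, defining $\tilde\mu_1$ as the corresponding first marginal. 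Your decomposition by $\sigma^+=(\mu_1-\mu_2)^+$ can be made to work, but you must replace the absolute-value estimate with the signed one; as written the proof does not reach the claimed bound.
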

\begin{proof}
We assume without loss of generality that $\alpha:= \|\mu_1\|-\|\mu_2\|>0$. Let $\bar \mu_2= \mu_2 + \alpha \delta_{\bar x}$ for some $\bar x \in X$. Then we have
\begin{equation*}
\begin{split}
\left| \int_X \psi d \mu_1 - \int_X \psi d\bar \mu_2\right| = &~ \left| \int_X (\psi - \psi(\bar x))d\mu_1 -  \int_X (\psi - \psi(\bar x))d\bar \mu_2 \right|\\
= &~  \left| \int_X (\psi - \psi(\bar x))d\mu_1 -  \int_X (\psi - \psi(\bar x))d\bar \mu_2 \right| \\
\le &~ C_1|\psi|_\Lip + C_2 |\psi|_\Lip \diam(X).
\end{split}
\end{equation*}
By Proposition \ref{P_duality} it follows that $W_1(\mu_1,\bar \mu_2)\le C_1 + C_2 \diam(X)$. 
Let $\pi \in \mathcal M(X^2)$ be an optimal plan with marginals $\mu_1$ and $\bar \mu_2$ and let $\tilde \pi \le \pi$ be such that $(p_2)_\sharp \tilde \pi = \mu_2$. Then the statement is true for $\tilde \mu_1 = (p_1)_\sharp \tilde \pi$ and $\tilde \mu_2 = \mu_2$.
\end{proof}
 
The next theorem from \cite{BD_L1map} provides the existence of an $L^1$-optimal map with respect to quite general distances on $\R^N$.
\begin{theorem}\label{T_opt_map}
Let $X=\R^N$ with $N\in\N$ be the Euclidean space equipped with the distance induced by a convex norm $|\cdot|_{D*}$. Let $\mu_1,\mu_2\in \mathcal P(\R^N)$ be two probability measures such that 
$\mu_1\ll \mathscr L^N$ and the infimum in \eqref{E_W1} is finite. Then there exists an optimal plan $\pi$ in \eqref{E_W1} induced by a map, i.e. there exists a measurable map $T:\R^N\to \R^N$
such that $T_\sharp \mu_1=\mu_2$ and
\begin{equation*}
W_1(\mu_1,\mu_2) = \int_X|T(x)-x|_{D*}d\mu_1(x).
\end{equation*}
\end{theorem}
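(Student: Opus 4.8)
The plan is to realise the optimal plan concretely, reducing it to a one-parameter family of one-dimensional monotone rearrangements along \emph{transport rays}, following the classical scheme of Sudakov together with its later corrections. By Kantorovich duality (Proposition~\ref{P_duality}; after a routine localisation one may take $\mu_1,\mu_2$ compactly supported) there is a function $v$ which is $1$-Lipschitz for the distance $d(x,y)=|x-y|_{D*}$ and realises the supremum. Then every optimal plan $\pi$ is concentrated on the contact set
\begin{equation*}
\Gamma:=\{(x,y)\in\R^N\times\R^N:\ v(x)-v(y)=|x-y|_{D*}\},
\end{equation*}
and conversely every $\pi'\in\Pi(\mu_1,\mu_2)$ concentrated on $\Gamma$ is optimal. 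Hence it suffices to build a \emph{map} $T$ with $T_\sharp\mu_1=\mu_2$ and $(x,T(x))\in\Gamma$ for $\mu_1$-a.e.\ $x$.

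\textbf{Transport rays.} Call a \emph{transport ray} a nondegenerate segment $[x,y]$ with $v(x)-v(y)=|x-y|_{D*}$ which is maximal with this property; along it $v$ is affine with slope $-1$. The relative interiors of distinct transport rays are disjoint (a consequence of this affine behaviour together with $v$ being $1$-Lipschitz), and the set of endpoints of transport rays is $\L^N$-negligible. Since $\mu_1\ll\L^N$, for $\mu_1$-a.e.\ $x$ either $x$ is a \emph{fixed point}, through which no nondegenerate transport ray passes and where one is forced to set $T(x)=x$, or $x$ lies on a unique transport ray $R_x$; let $\mathcal T$ be the union of the relative interiors of all transport rays. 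For a strictly convex norm the direction of $R_x$ is the unique unit vector dual to $\nabla v(x)$; for a general norm it is selected inside the face of the unit ball of $|\cdot|_{D*}$ exposed by $\nabla v(x)$.

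\textbf{Disintegration and monotone rearrangement.} Disintegrate $\L^N\llcorner\mathcal T=\int\lambda_R\,d\mathfrak m(R)$ along the partition of $\mathcal T$ into transport rays, and correspondingly $\mu_1\llcorner\mathcal T=\int\mu_{1,R}\,d\mathfrak m(R)$ with $\mu_{1,R}\ll\lambda_R$. The crucial point is that $\lambda_R\ll\H^1\llcorner R$ for $\mathfrak m$-a.e.\ $R$; granting this, every $\mu_{1,R}$ is non-atomic. Since $v$ is affine along $R$ and the contact relation in $\Gamma$ constrains mass leaving $R$ to land on $\overline R$, the optimal plan and hence the second marginal disintegrate accordingly, $\mu_2=\int\mu_{2,R}\,d\mathfrak m(R)$, and on each ray we are reduced to the one-dimensional Monge problem between $\mu_{1,R}$ and $\mu_{2,R}$ for the linear cost given by arclength in the direction of decreasing $v$. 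As $\mu_{1,R}$ is non-atomic, the monotone rearrangement $T_R$ is a genuine measurable map with $(T_R)_\sharp\mu_{1,R}=\mu_{2,R}$, and moving along $R$ in that direction realises $v(x)-v(T_R(x))=|x-T_R(x)|_{D*}$. Setting $T:=T_R$ on each $R$ and $T:=\mathrm{id}$ on the fixed-point set, and checking that $R\mapsto R_x$ and $R\mapsto T_R$ are measurable, one obtains a global measurable map $T$ with $T_\sharp\mu_1=\mu_2$ and $(x,T(x))\in\Gamma$ a.e., hence optimal.

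\textbf{Main obstacle.} Everything reduces to the absolute continuity $\lambda_R\ll\H^1\llcorner R$ — precisely the step missing from Sudakov's original argument. I would obtain it by proving that the ray-direction field is \emph{countably Lipschitz} (equivalently, of class $BV$) on $\mathcal T$: this yields a parametrisation of $\mathcal T$ by (base point on a transversal) $\times$ (arclength) whose Jacobian is positive $\H^1$-a.e.\ on each ray, and the area formula then gives the desired disintegration. For strictly convex norms this regularity follows from second-order properties of the Lipschitz potential $v$ along the lines of Caffarelli--Feldman--McCann and Trudinger--Wang; for a merely convex norm it is considerably harder and is in fact the heart of \cite{BD_L1map} — one must either establish the analogous $BV$-type estimate for the set-valued direction field, or bypass the disintegration by selecting, among all $|\cdot|_{D*}$-optimal plans, the one minimising a secondary strictly convex cost and showing that this distinguished plan is induced by a map through a blow-up argument at its points of approximate continuity. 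I expect this regularity (respectively, selection) step to be by far the most delicate part of the proof.
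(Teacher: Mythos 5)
The paper does not give its own proof of Theorem~\ref{T_opt_map}: it is imported verbatim as a black box from the cited reference \cite{BD_L1map}, so there is no in-paper argument to compare your sketch against. With that caveat, your outline is an accurate high-level account of the Sudakov program underlying that reference, and you correctly identify the decisive obstruction, namely the absolute continuity $\lambda_R\ll\H^1\llcorner R$ of the conditional measures of $\L^N$ on the ray decomposition --- precisely the gap in Sudakov's original argument that Caffarelli--Feldman--McCann, Trudinger--Wang, Ambrosio, and finally Bianchini--Daneri had to close.

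Two points deserve a correction if you were to flesh this out. First, for a norm that is merely convex (not strictly convex) the picture of a partition of $\mathcal T$ into one-dimensional transport rays with disjoint relative interiors is wrong: the analogue of a transport ray can be a face of arbitrary dimension $k\le N$, and \cite{BD_L1map} in fact decomposes the space into ``cyclically connected'' affine regions of varying dimension and disintegrates $\L^N$ over that stratification, with the 1D monotone rearrangement then applied inside each stratum after a further reduction. Your sketch, which keeps the segments as the fundamental objects and only lets the \emph{direction} be non-unique, undersells the difficulty that is the actual content of the cited theorem. Second, ``countably Lipschitz'' and ``of class $BV$'' are not equivalent for the direction field; what one needs (in the strictly convex case) is a countably Lipschitz, or equivalently countably $C^1$-rectifiable, parametrisation, and the $BV$ phrasing should be dropped. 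None of this affects the paper, since it relies on \cite{BD_L1map} as an external input, but it matters for the accuracy of the sketch as a description of the cited proof.
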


\subsection{Weak convergence of measures}
Given a metric space $X$, we denote by $\M_+(X)$ the set of finite non-negative Borel measures on $X$.
We will say that a sequence of measures $(\mu_n)_{n\in \N} \subset \M_+(X)$ is \emph{narrowly convergent} to $\mu \in \M_+(X)$ if
\begin{equation*}
\lim_{n\to \infty }\int_X f d\mu_n = \int_X f d \mu, \qquad \forall f \in   C_b(X),
\end{equation*} 
where $C_b(X)$ denotes the set of continuous real valued bounded functions on $X$.
We moreover say that a bounded family $\F\subset \M_+(X)$ is \emph{tight} if for every $\e>0$ there exists a compact set $K\subset X$ such that for every $\mu \in \F$ it holds
\begin{equation*}
\mu(X\setminus K)<\e.
\end{equation*}
The following classical theorem characterizes the relatively compact families in $\M_+(X)$ (see \cite{Billingsley}).
\begin{theorem}[Prokhorov]\label{T_Prok}
Let $X$ be a metric space. If a bounded family $\F\subset \M_+(X)$ is tight, then it is relatively compact with respect to the narrow convergence. If moreover $X$ is complete and separable then also the converse implication holds.
\end{theorem}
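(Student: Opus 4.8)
The plan is to prove the two implications separately, following the classical argument (see \cite{Billingsley}).

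\emph{Tightness implies relative compactness.} Let $\F$ be bounded and tight and $(\mu_n)\subset\F$; I must extract a narrowly convergent subsequence, with limit in $\M_+(X)$. By tightness, for each $j\in\N$ there is a compact $K_j$ with $\sup_n\mu_n(X\setminus K_j)<1/j$; replacing $K_j$ by $K_1\cup\dots\cup K_j$ I assume $K_j\uparrow$, so $S:=\bigcup_j K_j$ is a $\sigma$-compact, hence separable, Borel set, and each $\mu_n$ is concentrated on $S$ since $\mu_n(X\setminus S)\le\inf_j\mu_n(X\setminus K_j)=0$. Now embed $S$ topologically into the Hilbert cube $[0,1]^{\N}$ and let $Z$ be the closure of its image: $Z$ is a compact metric space, and the resulting Borel injection $\iota\colon S\to Z$ restricts to a homeomorphism of each $K_j$ onto the compact set $\iota(K_j)\subset Z$, whence $\iota(S)=\bigcup_j\iota(K_j)$ is Borel in $Z$ and $\iota^{-1}\colon\iota(S)\to S$ is Borel. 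Push forward: $\widehat\mu_n:=\iota_\sharp\mu_n\in\M_+(Z)$, with $\widehat\mu_n(Z)=\mu_n(X)\le R:=\sup_n\mu_n(X)<\infty$. Since $Z$ is compact metric, $C(Z)$ is separable, so the closed ball of radius $R$ in $\M(Z)=C(Z)^*$ is weak-$*$ compact and metrizable, hence sequentially compact; extracting moreover a subsequence along which $\mu_n(X)\to m\ge0$, I get $\widehat\mu_{n_k}\weaks\widehat\mu$ for some $\widehat\mu\in\M_+(Z)$, and on the compact space $Z$ this is narrow convergence.

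\emph{Transferring the limit back to $X$.} Because $\iota(K_j)$ is closed in $Z$, approximating its indicator from above by bounded continuous functions gives $\widehat\mu(\iota(K_j))\ge\limsup_k\widehat\mu_{n_k}(\iota(K_j))=\limsup_k\mu_{n_k}(K_j)\ge m-1/j$; since $\widehat\mu(Z)=\lim_k\mu_{n_k}(X)=m$, letting $j\to\infty$ shows $\widehat\mu$ is concentrated on $\iota(S)$, so $\mu:=(\iota^{-1})_\sharp\widehat\mu$ is a well-defined element of $\M_+(X)$ with $\iota_\sharp\mu=\widehat\mu$. It remains to check $\mu_{n_k}\to\mu$ narrowly in $X$. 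Given $f\in C_b(X)$ and $\e>0$, choose $j$ with $1/j<\e$ and use Tietze's theorem to extend the continuous function $f|_{K_j}\circ\iota^{-1}$ on the compact set $\iota(K_j)$ to some $g\in C(Z)$ with $\|g\|_\infty\le\|f\|_\infty$. Splitting $\int_X f\,d\mu_{n_k}$ over $K_j$ and $X\setminus K_j$, rewriting the first piece as $\int_{\iota(K_j)}g\,d\widehat\mu_{n_k}$ and using $\widehat\mu_{n_k}(Z\setminus\iota(K_j))=\mu_{n_k}(X\setminus K_j)<1/j$, one obtains $|\int_X f\,d\mu_{n_k}-\int_Z g\,d\widehat\mu_{n_k}|\le2\|f\|_\infty/j$, and likewise $|\int_X f\,d\mu-\int_Z g\,d\widehat\mu|\le2\|f\|_\infty/j$; since $\int_Z g\,d\widehat\mu_{n_k}\to\int_Z g\,d\widehat\mu$ this yields $\limsup_k|\int_X f\,d\mu_{n_k}-\int_X f\,d\mu|\le4\|f\|_\infty\e$, and letting $\e\to0$ finishes the first implication.

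\emph{The converse for $X$ complete and separable.} Assume $\F$ relatively compact; suppose it is not tight, so there is $\e>0$ with $\sup_{\mu\in\F}\mu(X\setminus K)\ge\e$ for every compact $K$. Fix a dense sequence $(x_i)$ in $X$ and, for $\delta>0$, set $G_N:=\bigcup_{i\le N}B(x_i,\delta)$. The key step is the claim that $\lim_{N\to\infty}\sup_{\mu\in\F}\mu(X\setminus G_N)=0$ for every fixed $\delta$. Indeed, if this failed there would be $\e_\delta>0$ and $\mu_N\in\F$ with $\mu_N(X\setminus G_N)\ge\e_\delta$; extracting $\mu_{N_k}\to\mu$ narrowly, for each fixed $M$ one has, for $k$ large, $\mu_{N_k}(G_M)\le\mu_{N_k}(X)-\e_\delta$, so by openness of $G_M$ and $\mu_{N_k}(X)\to\mu(X)$ also $\mu(G_M)\le\mu(X)-\e_\delta$, which contradicts $G_M\uparrow X$. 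Granting the claim, for each $m$ pick $N_m$ with $\sup_{\mu\in\F}\mu(X\setminus A_m)<\e\,2^{-m}$, where $A_m:=\bigcup_{i\le N_m}B(x_i,1/m)$; then $K:=\bigcap_m\overline{A_m}$ is closed and totally bounded, hence compact because $X$ is complete, while $\mu(X\setminus K)\le\sum_m\mu(X\setminus\overline{A_m})<\e$ for every $\mu\in\F$ — contradicting the choice of $\e$.

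\emph{Main obstacle.} The delicate point is the first implication, and within it the passage back from the auxiliary compactification $Z$ to $X$: this is where tightness must be used a second time, both to prove that no mass of $\widehat\mu$ sits on $Z\setminus\iota(S)$ and to replace a general $f\in C_b(X)$ by a function extending continuously to $Z$. In the converse the only subtle point is the displayed claim, and it is the sole place where completeness of $X$ is needed, to upgrade ``closed and totally bounded'' to ``compact''.
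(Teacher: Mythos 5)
The paper states Prokhorov's theorem as a classical fact and refers to Billingsley without giving a proof, so there is no in-paper argument to compare against. Your proof is the standard textbook argument (embedding the $\sigma$-compact support into the Hilbert cube, extracting a weak-$*$ limit there, transferring it back using tightness and Tietze extension for the direct implication; the countable-cover/total-boundedness argument using completeness for the converse), and it is correct.
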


%
%
%The lifting $\phi$ satisfies the following kinetic equation:
%\begin{equation}
%i e^{ia}\cdot \nabla_x \chi_a = - \partial_a (\div e^{i \phi \wedge a}), \qquad \mbox{where} \quad \chi_a(x)= \chi (\phi(x)-a) 
%\end{equation}
%and $\chi:\R\to \R$ denotes the Heaviside function.

\section{Lagrangian representation for vector fields in $\M_\div$.}\label{S_Lagrangian}
In this section we introduce the notions of Lagrangian representations of the hypograph and of the epigraph for
the liftings $\phi$ of vector fields in $\M_\div(\Omega)$. We moreover provide a suitable decomposition along characteristics of
the kinetic measure $U_\phi$ introduced in \eqref{E_kinetic}.

\subsection{Notation and main definition}
We will consider the standard decomposition of the measure $Df \in \M(\R)$, where $f\in \BV(\R,\R)$ (see for example \cite{AFP_book}).
We will adopt the following notation:
\begin{equation*}
Df= D^{\ac}f + D^c f + D^jf = \tilde Df + D^jf,
\end{equation*}
where $D^{\ac}f$, $D^cf$ and $D^jf$ denote the absolutely continuous part, the Cantor part and the atomic part of $Df$ respectively; we refer to $\tilde Df$ as the diffuse part of $Df$.

For every function $\phi:\Omega\to [0,M]$ we denote its hypograph and its epigraph by
\begin{equation*}
H_\phi:= \{(x,a)\in \Omega\times [0,M]: a \le \phi(x)\} \qquad \mbox{and} \qquad E_\phi:= \{(x,a)\in \Omega\times [0,M]: a\ge \phi(x)\}
\end{equation*}
respectively.

We denote by $B_R$ an open ball of radius $R$ such that $\overline{B_R}\subset \Omega$ and we set
\begin{equation*}
 \Gamma:= \left\{ (\gamma,t^-_\gamma,t^+_\gamma) : 0\le t^-_\gamma\le t^+_\gamma\le 1, \gamma\in \BV \big((t^-_\gamma,t^+_\gamma); B_R\times [0,M]\big), 
 \gamma_x \mbox{ is Lipschitz} \right\}.
\end{equation*}
For every $t \in (0,1)$ we consider the section 
\begin{equation*}
 \Gamma(t):= \{(\gamma,t^-_\gamma,t^+_\gamma)\in  \Gamma: t \in (t^-_\gamma,t^+_\gamma)\}.
\end{equation*}
%
%\begin{equation*}
%\Gamma :=\left\{\gamma=(\gamma_x,\gamma_a)\in BV([0,1);\overline{B_R}\times [0,M]): \gamma_x \mbox{ is Lipschitz}
%\right\}.
%\end{equation*}
%We will always consider the right-continuous representative of $\gamma_a$. 
%We moreover let $\tilde \Gamma \subset \Gamma \times [0,1]^2$ be the set of triples $(\gamma,t^-_\gamma,t^+_\gamma)$ such that
%\begin{enumerate}
%\item $t^-_\gamma<t^+_\gamma$ and  $\gamma ((t^-_\gamma,t^+_\gamma))\subset B_R \times [0,M]$;
%\item $ \gamma(t) = \gamma(t^-_\gamma)$ for every $t \in [0,t^-_\gamma]$;
%\item $\gamma(t) = \gamma(t^+_\gamma)$ fir every $t \in [t^+_\gamma,1)$.
%\end{enumerate}
%In the following only the definition of $\gamma$ for $t\in (t^-_\gamma,t^+_\gamma)$ will be relevant. 
%Conditions (2) and (3) are introduced only to fix a representative in $[0,1)\setminus (t^-_\gamma,t^+_\gamma)$.
%In particular for every $ t \in (0,1)$ we set
%\begin{equation}
%\tilde \Gamma(t):= \left\{ (\gamma, t^-_\gamma, t^+_\gamma) \in \tilde \Gamma : t \in (t^-_\gamma, t^+_\gamma)  \right\}.
%\end{equation}
and we denote by 
\begin{equation*}
\begin{split}
 e_t: \Gamma(t) &\to B_R \times [0,M] \\
(\gamma, t^-_\gamma, t^+_\gamma) & \mapsto  \gamma(t).
\end{split}
\end{equation*}
Sometimes we will identify the triple $(\gamma,t^-_\gamma,t^+_\gamma) \in  \Gamma$ with the curve $\gamma$ itself to make the notation less heavy. 

\begin{definition}\label{D_Lagr}
Let $u \in \M_\div(\Omega)$ and $\phi \in L^\infty(\Omega)$ as in Definition \ref{D_M_div}. We say that the Radon measure $\omega_h \in \M( \Gamma)$ is a \emph{Lagrangian representation} of the hypograph of $\phi$ on $B_R$ if the following conditions hold:
\begin{enumerate}
\item for every $t\in (0,1)$ it holds
\begin{equation}\label{E_repr_formula}
( e_t)_\sharp \left[ \omega_h \llcorner  \Gamma(t)\right]= \mathscr L^{3}\llcorner H_{\phi};
\end{equation}
\item the measure $\omega_h$ is concentrated on the set of curves $\gamma\in  \Gamma$ such that for $\L^1$-a.e. $t \in (t^-_\gamma,t^+_\gamma)$ 
the following characteristic equation holds:
\begin{equation}\label{E_characteristic}
\dot\gamma_x(t)= i e^{i \gamma_a(t)};
\end{equation}
\item it holds the integral bound
\begin{equation}\label{E_reg}
\int_{ \Gamma} \TV_{[0,1)} \gamma_a d\omega_h(\gamma) <\infty.
\end{equation}
\end{enumerate}
Similarly we say that $\omega_e \in \M( \Gamma)$ is a \emph{Lagrangian representation} of the epigraph of $u$ on $B_R$ if Conditions (2) and (3) hold and (1) is replaced by
\begin{equation}\label{E_repr_e}
( e_t)_\sharp \left[ \omega_e \llcorner  \Gamma(t)\right]= \mathscr L^{3}\llcorner E_{\phi} \qquad \mbox{for every }t \in (0,1).
\end{equation}
\end{definition}
In the following we will adopt the slight abuse of notation
\begin{equation*}
( e_t)_\sharp \omega_h := ( e_t)_\sharp \left( \omega_h\llcorner  \Gamma(t) \right).
\end{equation*}

A fundamental property of the Lagrangian representations $\omega_h,\omega_e$ above is that it is possible to decompose the Radon measure $U_\phi$ along the characteristic curves.
 
Given $\gamma \in \Gamma$ we consider
\begin{equation*}
\mu_\gamma=(\Id, \gamma)_\sharp  \tilde D_t \gamma_a + \H^1\llcorner E_{\gamma}^+ -\H^1\llcorner E_{\gamma}^- \in \M((0,1)\times B_R \times [0,M]),
\end{equation*}
where
\begin{equation*}
\begin{split}
E_\gamma^+:=&\{(t,x,a): \gamma_x(t)=x, \gamma_a(t-)<\gamma_a(t+), a \in (\gamma_a(t-),\gamma_a(t+)) \}, \\
E_\gamma^-:=&\{(t,x,a): \gamma_x(t)=x, \gamma_a(t+)<\gamma_a(t-), a \in (\gamma_a(t+),\gamma_a(t-)) \},
\end{split}
\end{equation*}
$\Id:[0,1)\to [0,1)$ denotes the identity and $\tilde D_t \gamma_a$ denotes the diffuse part of the measure $D_t\gamma_a$.

The main result of this section is the following theorem.

\begin{theorem}\label{T_Lagrangian}
Let $u \in \M_\div(\Omega)$ and $\phi \in L^\infty(\Omega)$ as in Definition \ref{D_M_div}.
Let $B_R$ be an open ball of radius $R$ such that $\overline{B_R}\subset \Omega$ and $\H^1$-a.e. $x\in \partial B_R$ is a Lebesgue point of $\phi$.
Then there exist $\omega_h, \omega_e$ Lagrangian representations of the hypograph and of the epigraph of $u$ respectively on $B_R$ enjoying the additional properties:
\begin{equation}\label{E_dec_1}
\int_{ \Gamma}\mu_\gamma d\omega_h(\gamma)=\L^1 \times U_\phi = -\int_{ \Gamma}\mu_\gamma d\omega_e(\gamma),
\end{equation}
\begin{equation}\label{E_dec_2}
\int_{\Gamma}|\mu_\gamma| d\omega_h(\gamma)=\L^1 \times| U_\phi | = \int_{\Gamma}|\mu_\gamma| d\omega_e(\gamma).
\end{equation}
\end{theorem}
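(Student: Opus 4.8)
The plan is to construct $\omega_h$ (and symmetrically $\omega_e$) by a regularization-and-compactness argument, following the blueprint of \cite{M_Lebesgue} but adapting it to the bounded domain $B_R$. First I would mollify: set $\phi_\ee := \phi * \rho_\ee$ on a slightly smaller ball, or rather work with the kinetic function $\chi_\ee := \chi * \rho_\ee$, which is no longer a characteristic function but satisfies a regularized version of \eqref{E_kinetic} with a right-hand side that is the mollification of $-\partial_a U_\phi$ plus a commutator error. Along the smooth vector field $x\mapsto i e^{i a}$ (with $a$ frozen) the characteristics are straight lines with unit speed, so for the mollified problem one has a genuine flow map and can write down an approximate Lagrangian representation $\omega_h^\ee$ supported on polygonal-in-time curves: each curve carries its $a$-coordinate which jumps according to where the characteristic meets the mollified transport; the vertical variation of $\gamma_a$ is controlled by the total mass of the mollified kinetic measure, giving a uniform bound on $\int \TV_{[0,1)}\gamma_a\, d\omega_h^\ee$, i.e.\ a uniform version of \eqref{E_reg}. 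The novelty over the whole-space case is that characteristics can exit $B_R$; this is where the Lebesgue-point hypothesis on $\partial B_R$ enters, so that the trace of $\phi$ on the boundary is well-defined and the curves can be stopped (setting $t^+_\gamma$) in a controlled way without creating spurious defect mass. Here is precisely where I expect to spend the technical work: setting up boundary terms in the balance \eqref{E_dec_1} so that they vanish in the limit, using the transport/duality estimates of Section~\ref{S_Lagrangian}'s preliminaries (Corollary~\ref{C_different_masses} and Theorem~\ref{T_opt_map}) to relocate the small amount of mass that would otherwise be lost at $\partial B_R$.

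\textbf{Compactness and passage to the limit.} With the uniform bounds \eqref{E_reg} in hand, together with the uniform Lipschitz bound on $\gamma_x$ (which is automatic: $|\dot\gamma_x| = 1$) and the fact that all curves live in the fixed compact set $\overline{B_R}\times[0,M]$, the family $\{\omega_h^\ee\}$ is tight on $\Gamma$ in the appropriate topology — here one must choose the topology on $\Gamma$ carefully, e.g.\ uniform convergence of $\gamma_x$ together with strict/weak-$*$ convergence of the measures $D_t\gamma_a$ and convergence of the endpoints $t^\pm_\gamma$, and check that $\TV$ is lower semicontinuous and $\Gamma(t)$ behaves well under limits. Then Prokhorov's theorem (Theorem~\ref{T_Prok}) yields a narrowly convergent subsequence $\omega_h^\ee \rightharpoonup \omega_h$. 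I would then verify the three defining properties of Definition~\ref{D_Lagr}: \eqref{E_repr_formula} passes to the limit because $(e_t)_\sharp[\omega_h^\ee\llcorner\Gamma(t)] \to \L^3\llcorner H_{\phi_\ee} \to \L^3\llcorner H_\phi$ (using $\chi_\ee \to \chi$ in $L^1$ and that $\L^3\llcorner H_\phi$ has no atoms, so the restriction to $\Gamma(t)$ is continuous for a.e.\ $t$, and then for every $t$ by a monotonicity argument); \eqref{E_characteristic} holds in the limit since it is a closed condition under the chosen topology (the vertical drift of $\gamma_x$ is linear in $e^{i\gamma_a}$ and $\gamma_a$ converges); and \eqref{E_reg} survives by lower semicontinuity.

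\textbf{The decomposition of $U_\phi$.} The heart of the matter, and the reason the curves carry the specific measure $\mu_\gamma$ (the pushforward of the diffuse part $\tilde D_t\gamma_a$ plus the $\H^1$ terms on the jump sets $E_\gamma^\pm$), is identity \eqref{E_dec_1}. For the mollified problem this is a direct computation: differentiating the relation $(e_t)_\sharp[\omega_h^\ee\llcorner\Gamma(t)] = \L^3\llcorner H_{\phi_\ee}$ in $t$ and comparing with the regularized kinetic equation \eqref{E_kinetic} shows that the temporal variation of the hypograph along characteristics is exactly $\L^1\times(\text{mollified }U_\phi)$ — intuitively, as a characteristic curve sweeps forward its $a$-level changes precisely to keep the hypograph consistent with $\chi$, and the kinetic measure $U_\phi$ records exactly this change. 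The $\H^1\llcorner E_\gamma^\pm$ terms arise because jumps of $\gamma_a$ (where the characteristic instantaneously crosses a range of $a$-levels) must be counted with the intervening levels; the decomposition into diffuse and jump parts of $D_t\gamma_a$ mirrors the fact that $U_\phi$ itself has no preferred structure yet but the jump part will later be shown (via Theorem~\ref{T_intro}) to live on $J$. Passing to the limit in \eqref{E_dec_1} requires that $\omega_h^\ee \mapsto \int \mu_\gamma\, d\omega_h^\ee$ be continuous, which is delicate because $\mu_\gamma$ depends discontinuously on $\gamma$ at curves whose vertical measure has atoms; I would handle this exactly as the total-variation identity \eqref{E_dec_2} suggests — prove the inequality ``$\le$'' for $|\,\cdot\,|$ by lower semicontinuity and the reverse by testing against the kinetic equation, so that equality of masses forces \eqref{E_dec_2}, and then \eqref{E_dec_1} follows since no cancellation is lost. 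The main obstacle, to be clear, is this last continuity/no-loss-of-mass issue at the boundary $\partial B_R$ and at curves with atomic vertical variation; everything else is a careful but standard adaptation of \cite{M_Lebesgue}.
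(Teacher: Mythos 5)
Your outer strategy (construct approximations, extract a narrow limit by Prokhorov, verify the defining properties and the $\mu_\gamma$-decomposition by lower semicontinuity plus a testing argument) is the right skeleton, but the mollification step at the heart of your construction does not close. If you mollify $\chi$ to $\chi_\ee := \chi*\rho_\ee$, the result takes values in $[0,1]$ rather than $\{0,1\}$ and is no longer the indicator of a hypograph, so there is no set $H_{\phi_\ee}$ and no evident way to define an approximate Lagrangian representation with $(e_t)_\sharp\omega_h^\ee = \chi_\ee\L^3$ by superposing curves. If instead you mollify $\phi$ to $\phi_\ee$ and set $\chi_\ee=\mathbf{1}_{\phi_\ee\ge a}$, then $\phi_\ee$ no longer solves the eikonal equation and the commutator on the right-hand side of the regularized kinetic equation is not an $a$-derivative of a measure with $\ee$-uniformly controlled mass, so the uniform bound $\int\TV\gamma_a\,d\omega_h^\ee\lesssim\nu(B_R)$ does not follow. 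Most critically, in either variant the mollified problem does not come with ``a genuine flow map'' in $(x,a)$: the source $-\partial_a U_\phi$ is what forces $\gamma_a$ to jump, and converting a distributional $a$-derivative into actual jumps of individual trajectories requires a mass-rearrangement mechanism that mollification alone does not supply.

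The paper's replacement for this missing mechanism is time discretization combined with $L^1$-optimal transport, and this is where the real work happens. Over each dyadic time step of length $2^{-n}$ the curves free-stream with $\dot\gamma_x=ie^{i\gamma_a}$ and $\gamma_a$ constant; the kinetic formulation together with the Lebesgue-point hypothesis on $\partial B_R$ gives a \emph{weak} (duality) estimate on the distance between the free-streamed and the true hypograph (Proposition~\ref{P_L1_estimate}); and Corollary~\ref{C_different_masses} together with Theorem~\ref{T_opt_map} upgrades this weak bound to an explicit transport map $T$ on $(B_R\times[0,M],d_{\bar t})$ (Corollary~\ref{C_T}). It is $T$ that prescribes the jumps of $\gamma_a$ at each dyadic time and simultaneously furnishes the uniform vertical-variation estimate \eqref{E_est_vert} and the horizontal-error estimate \eqref{E_est_char1}. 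So the role you assign to Corollary~\ref{C_different_masses} and Theorem~\ref{T_opt_map}---relocating a small amount of boundary mass---badly understates them: they are the engine of the construction in the bulk, not merely a boundary patch, and without a substitute for this transport step the scheme does not produce approximate curves with the required uniform bounds.
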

The equations \eqref{E_dec_1} and \eqref{E_dec_2} are equalities in the space $\M((0,1)\times B_R \times [0,M])$; 
Eq. \eqref{E_dec_1} asserts that the measure $\L^1\times U_\phi$ can be decomposed along characteristics and Eq. \eqref{E_dec_2} says that it can be done minimizing
\begin{equation*}
\int_{\Gamma} \TV_{(0,1)} \gamma_a d\omega_h(\gamma) \qquad \mbox{and} \qquad \int_{\Gamma} \TV_{(0,1)} \gamma_a d\omega_e(\gamma). 
\end{equation*}
Moreover it follows from \eqref{E_dec_1} and \eqref{E_dec_2} that we can separately represent the negative and the positive parts of $\L^1\times U_\phi$ in terms of the negative and positive parts of the measures $\mu_\gamma$:
\begin{equation}\label{E_mu-}
\int_{\Gamma}\mu^-_\gamma d\omega_h(\gamma)=\L^1\times U_\phi^- = \int_{\Gamma}\mu^+_\gamma d\omega_e(\gamma) \qquad \mbox{and} \qquad 
\int_{\Gamma}\mu^+_\gamma d\omega_h(\gamma)=\L^1\times U_\phi^+ = \int_{\Gamma}\mu^-_\gamma d\omega_e(\gamma).
\end{equation} 

The proof of Theorem \ref{T_Lagrangian} follows the strategy used in \cite{M_Lebesgue} to deal with general conservation laws; some additional work is required to obtain representation of solutions defined on $B_R$ and not on the whole Euclidean space. 

\subsection{An $L^1$-transport estimate}
In this section we prove an $L^1$-transport estimate that will be used as building block in the construction of approximate characteristics. 
We first need the following lemma.

\begin{lemma}\label{L_chi12}
Let $B_R$ be an open ball of radius $R$ such that $\overline{B_R}\subset \Omega$ and $\H^1$-a.e. $x\in \partial B_R$ is a Lebesgue point of $\phi$. Let $\bar t>0$ be such that $\bar t < \dist(B_R,\partial \Omega)$ and let $\chi,U_\phi$ be as in \eqref{E_kinetic}.
We define $\chi^1,\chi^2: [0,\bar t]\times \Omega\times [0,M] \to \{0,1\}$ as
\begin{equation*}
\chi^1(t,x,a)= \chi(x,a) \mathbf{1}_{B_R}(x) \qquad \mbox{and} \qquad \chi^2(t,x,a)=\chi(x-ie^{ia}t,a)\mathbf{1}_{B_R}(x).
\end{equation*}
Then there exist two Radon measure $\mu^1_{\bar t},\mu^2_{\bar t}\in \M([0,\bar t]\times \Omega\times [0,M])$ absolutely continuous with respect to $\H^3\llcorner( [0,\bar t]\times \partial B_R \times [0,M])$ such that 
\begin{equation}\label{E_chi12}
\begin{split}
\partial_t \chi^1+ ie^{ia}\cdot \nabla_x \chi^1 = &~ -\partial_a (\mathbf{1}_{B_R\times [0,M]}U_\phi) + \mu^1_{\bar t},\\
\partial_t \chi^2 + i e^{ia}\cdot \nabla_x \chi^2 = &~ \mu^2_{\bar t},  \\
\e_{\bar t}:= \frac{\|\mu^1_{\bar t} - \mu^2_{\bar t} \|}{\bar t} \to &~ 0 \qquad \mbox{as }\bar t \to 0.
\end{split}
\end{equation}
\end{lemma}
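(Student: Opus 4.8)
The plan is to prove Lemma \ref{L_chi12} by a direct computation of the distributional derivatives of $\chi^1$ and $\chi^2$, isolating boundary terms supported on $\partial B_R$. First I would observe that, by the kinetic equation \eqref{E_kinetic}, $\chi$ itself satisfies $ie^{ia}\cdot\nabla_x\chi = -\partial_a U_\phi$ on $\Omega\times[0,M]$, and since $\chi^1$ is time-independent, $\partial_t\chi^1 = 0$. Thus the only source of the right-hand side of the first identity in \eqref{E_chi12} comes from multiplying by $\mathbf 1_{B_R}(x)$: for a test function $\psi$,
\begin{equation*}
\langle ie^{ia}\cdot\nabla_x(\chi\mathbf 1_{B_R}),\psi\rangle = \langle ie^{ia}\cdot\nabla_x\chi,\mathbf 1_{B_R}\psi\rangle + \langle \chi\, ie^{ia}\cdot\nabla_x\mathbf 1_{B_R},\psi\rangle,
\end{equation*}
and the second term, by the divergence theorem, is a measure carried by $[0,\bar t]\times\partial B_R\times[0,M]$ with density $\chi(x,a)\,(ie^{ia}\cdot\n_{\partial B_R})$ against $\H^{3}$; this defines $\mu^1_{\bar t}$. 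For $\chi^2$, I would similarly compute: since $(t,x)\mapsto\chi(x-ie^{ia}t,a)$ is transported exactly along the characteristic flow $\dot x = ie^{ia}$ for fixed $a$, we have $\partial_t\chi^2 + ie^{ia}\cdot\nabla_x\chi^2 = 0$ on the full space $[0,\bar t]\times\R^2\times[0,M]$ — here one uses that $\chi$ solves $ie^{ia}\cdot\nabla_x\chi = -\partial_a U_\phi$ but the $\partial_a$ term cancels because $\chi^2$ is a pure translate in $x$ for each frozen $a$, not a genuine transport in $a$ — more carefully, $\partial_a\big[\chi(x-ie^{ia}t,a)\big]$ contributes the extra term $-t\,\partial_a(ie^{ia})\cdot\nabla_x\chi$, which must be tracked and shown to be absorbed; in any case the only surviving term after multiplying by $\mathbf 1_{B_R}$ is again a boundary measure $\mu^2_{\bar t}$ on $[0,\bar t]\times\partial B_R\times[0,M]$ with density $\chi(x-ie^{ia}t,a)(ie^{ia}\cdot\n_{\partial B_R})$. (This is the subtle point: one should check whether $\chi^2$ as literally defined is an exact solution or whether a small error measure appears, and if so that it is $o(\bar t)$ as well; I would phrase the statement so that $\mu^2_{\bar t}$ collects all non-boundary contributions, all of which are $O(\bar t)\cdot o(1)$-small.)

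Next I would estimate $\|\mu^1_{\bar t} - \mu^2_{\bar t}\|$. Both measures live on $[0,\bar t]\times\partial B_R\times[0,M]$ and have the common weight $ie^{ia}\cdot\n_{\partial B_R}$, so their difference has total mass controlled by
\begin{equation*}
\|\mu^1_{\bar t} - \mu^2_{\bar t}\| \le \int_0^{\bar t}\!\!\int_{\partial B_R}\!\!\int_0^M \big|\chi(x,a) - \chi(x - ie^{ia}t,a)\big|\,da\,d\H^1(x)\,dt.
\end{equation*}
The inner $a$-integral equals $|\phi(x) - \phi(x - ie^{ia}t)|$ — more precisely $\int_0^M|\mathbf 1_{\phi(x)\ge a}-\mathbf 1_{\phi(x-ie^{ia}t)\ge a}|\,da$, which is bounded by the variation of $\phi$ between the two points; since the shift vector $ie^{ia}t$ has length $t\le\bar t$, this is an average modulus-of-continuity-type quantity. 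The hypothesis that $\H^1$-a.e. $x\in\partial B_R$ is a Lebesgue point of $\phi$ is exactly what forces this double integral, after dividing by $\bar t$, to tend to $0$: by dominated convergence it suffices that for $\H^1$-a.e. $x\in\partial B_R$ and a.e. $a$, $\chi(x-ie^{ia}t,a)\to\chi(x,a)$ as $t\to 0$ — but here the convergence is for the \emph{trace} of $\chi$ on $\partial B_R$, which is where the Lebesgue-point assumption enters. The clean way is: $\frac1{\bar t}\|\mu^1_{\bar t}-\mu^2_{\bar t}\| \le \frac{1}{\bar t}\int_0^{\bar t}\omega(t)\,dt$ where $\omega(t) := \int_{\partial B_R}\int_0^M |\chi(x,a)-\chi(x-ie^{ia}t,a)|\,da\,d\H^1(x)$, and one shows $\omega(t)\to 0$ as $t\to 0$; since $\omega$ is bounded, the Cesàro average also tends to $0$.

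The main obstacle I expect is making rigorous the claim that $\omega(t)\to 0$, i.e., the $L^1(\partial B_R\times[0,M])$-continuity of translation applied to the trace of $\chi$. The function $\phi$ is merely $L^\infty$ inside $\Omega$, so the restriction of $\chi(\cdot,a)$ to the lower-dimensional set $\partial B_R$ is a priori not well-defined; this is precisely why the hypothesis is stated in terms of Lebesgue points of $\phi$ on $\partial B_R$. I would handle it by noting that for a point $x\in\partial B_R$ that is a Lebesgue point of $\phi$ with value $\phi(x)$, and since the translated point $x-ie^{ia}t$ lies at distance $t$ from $x$, one has $\frac{1}{t^2}\int_{B_t(x)}|\phi(y)-\phi(x)|\,dy\to 0$, which after a Fubini/averaging argument over nearby translates gives $\int_0^M|\chi(x,a)-\chi(x-ie^{ia}t,a)|\,da\to 0$ for a.e.\ such $x$ along a suitable notion of approximate trace; more robustly, one can smooth $\chi$ in $x$ by convolution, handle the smooth case by classical translation-continuity, and control the error uniformly using that the mollification converges in $L^1_{\loc}$ together with the Lebesgue-point property on $\partial B_R$ to pass to the limit. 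Combining the two computations with this estimate yields \eqref{E_chi12} and closes the proof.
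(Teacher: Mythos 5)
Your decomposition is the same as the paper's: $\mu^1_{\bar t}$ and $\mu^2_{\bar t}$ are the boundary fluxes with densities $\chi(x,a)\,ie^{ia}\cdot\n(x)$ and $\chi(x-ie^{ia}t,a)\,ie^{ia}\cdot\n(x)$ against $\H^3\llcorner([0,\bar t]\times\partial B_R\times[0,M])$, and everything reduces to $\int_0^{\bar t}\int_{\partial B_R}\int_0^M|\chi(x,a)-\chi(x-ie^{ia}t,a)|\,da\,d\H^1\,dt=o(\bar t)$. The genuine gap is in how you close this. You reduce to showing $\omega(t):=\int_{\partial B_R}\int_0^M|\chi(x,a)-\chi(x-ie^{ia}t,a)|\,da\,d\H^1\to 0$ and then pass to the Ces\`aro mean. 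But $\omega(t)\to 0$ does \emph{not} follow from the Lebesgue-point hypothesis: for fixed $t$, the quantity $\chi(x-ie^{ia}t,a)$ samples $\phi$ only on a one-dimensional arc of radius $t$ about $x$, which is not controlled by the two-dimensional Lebesgue-point condition $r^{-2}\int_{B_r(x)}|\phi-\phi(x)|\to 0$; the mollification sketch you offer runs into the same trace difficulty it is meant to resolve. What is true, and what the lemma actually asserts, is the $t$-averaged statement only. To prove it, split off the cheap contribution from $|a-\phi(x)|\le\epsilon$ (cost $O(\epsilon\bar t)$); for the rest, using $|\chi(x,a)-\chi(y,a)|=1\Rightarrow|a-\phi(x)|\le|\phi(y)-\phi(x)|$ and the change of variables $y=x-ie^{ia}t$ (Jacobian $t$), one lands on $\int_{\text{fan}}\mathbf 1_{|\phi(y)-\phi(x)|>\epsilon}\,|y-x|^{-1}\,dy$, and a dyadic decomposition in $|y-x|$ combined with the Lebesgue-point property on each $B_{2^{-k}\bar t}(x)$ yields $o(\bar t)$ after summing. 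It is precisely the extra $t$-integration that makes the weight $|y-x|^{-1}$ integrable and brings the two-dimensional Lebesgue information into play; there is no pointwise-in-$t$ version.

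Two smaller remarks. The product rule step $\mathbf 1_{B_R}\,ie^{ia}\cdot\nabla_x\chi=-\partial_a(\mathbf 1_{B_R\times[0,M]}U_\phi)$ needs the fact that $U_\phi$ puts no mass on $\partial B_R\times[0,M]$, i.e.\ $\nu(\partial B_R)=0$; the paper extracts this from Theorem \ref{T_intro} together with the Lebesgue-point hypothesis on $\partial B_R$, and you should state it too. Finally, your hedging about an extra term $-t\,\partial_a(ie^{ia})\cdot\nabla_x\chi$ for $\chi^2$ is a red herring: the second identity in \eqref{E_chi12} contains no $\partial_a$, and $\tilde\chi(t,x,a):=\chi(x-ie^{ia}t,a)$ satisfies $\partial_t\tilde\chi+ie^{ia}\cdot\nabla_x\tilde\chi=0$ identically by the chain rule in $(t,x)$ at frozen $a$, so $\mu^2_{\bar t}$ is exactly the boundary flux and nothing else.
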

\begin{proof}
By Theorem \ref{T_intro} it holds $\nu(\partial B_R)=0$ so that the first equation in \eqref{E_chi12} holds for 
\begin{equation*}
\mu^1_{\bar t}= g \H^3\llcorner \left( [0,\bar t]\times \partial B_R \times [0,M]\right) \qquad \mbox{with} \qquad g(t,x,a)= ie^{ia}\cdot n(x) \chi(x,a),
\end{equation*}
where $n$ denotes the inner normal to $B_R$ and the dot denotes the scalar product of vectors in $\R^2$.
From definition of $\chi^2$, the second equation in \eqref{E_chi12} holds with 
\begin{equation*}
\mu^2_{\bar t}= i e^{ia}\cdot n(x)  \chi(x-ie^{ia}t,a) \H^3\llcorner  \left( [0,\bar t]\times \partial B_R \times [0,M]\right).
\end{equation*}
In particular 
\begin{equation*}
\|\mu^1_{\bar t}-\mu^2_{\bar t}\| = \int_0^{\bar t}\int_0^M\int_{\partial B_R}|\chi(x,a) - \chi (x-ie^{ia}t,a)| d\H^1(x) da dt = o(\bar t) \qquad \mbox{as }\bar t \to 0
\end{equation*}
since $\H^1$-a.e. $x\in \partial B_R$ is a Lebesgue point of $\phi$ and therefore $\H^2$-a.e. $(x,a) \in \partial B_R \times [0,M]$ is a Lebesgue point of $\chi$.
\end{proof}
%
%
%We consider the same setting as in the statement of Theorem \ref{T_Lagrangian}.
%Fix $\bar t > 0$ and let $\chi^1:[0,\bar t]\times \Omega \times \R \to \{0,1\}$ be defined by
% $\chi^1(t,x,a)=\chi(x,a) \mathbf{1}_{B_R}(x)$. Since $\H^1$-a.e. $x \in \partial B_R$ is a Lebesgue point of $\phi$ and $\nu(\partial B_R)=0$,
%then $\chi^1$ is a stationary solution of
%\begin{equation}
%\partial_t \chi^1+ ie^{ia}\cdot \nabla_x \chi^1 = -\partial_a (\mathbf{1}_{B_R\times [0,M]}U_\phi) + \mu^1_{\bar t}, \qquad \mbox{where } \mu^1_{\bar t} = \L^1_t \times i e^{ia}\cdot n \chi \H^2\llcorner (\partial B_R\times \R).
%\end{equation}
%
%Moreover let $\chi^2:[0,\bar t]\times \Omega \times \R \to \{0,1\}$ be defined by
%\begin{equation}
%\chi^2(t,x,a)= \chi(x-ie^{ia}t,a) \mathbf{1}_{B_R}(x)
%\end{equation}
%The evolution of $\chi^2$ is given by
%\begin{equation}
%\partial_t \chi^2 + i e^{ia}\cdot \nabla_x \chi^2 = \mu^2_{\bar t}, \qquad \mbox{with } \mu^2_{\bar t} << \H^3\llcorner ([0,\bar t] \times \partial B_R\times \R).
%\end{equation}
%
%We claim that 
%\begin{equation}
%\e_{\bar t}:= \frac{\|\mu^1_{\bar t} - \mu^2_{\bar t} \|}{\bar t} \to 0 \qquad \mbox{as }\bar t \to 0.
%\end{equation}

\begin{proposition}\label{P_L1_estimate}
In the setting of Lemma \ref{L_chi12}, let $\psi \in C^1_c(\Omega \times \R)$. Then
\begin{equation*}
\int_{\Omega \times \R}\psi(x,a)(\chi^1(\bar t)-\chi^2(\bar t))dx da \le \left( \bar t \| \partial_a \psi \|_{L^\infty} + \frac{\bar t^2}{2}\|\nabla_x \psi\|_{L^\infty}\right)\nu ( B_R) + \|\psi\|_{L^\infty}\e_{\bar t} \bar t.
\end{equation*} 
\end{proposition}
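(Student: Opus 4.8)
The plan is to test the two equations of Lemma~\ref{L_chi12} against the product $\psi(x,a)\,\mathbf 1_{[0,\bar t]}(t)$ and subtract them. Since $\chi^1(0,\cdot)=\chi^2(0,\cdot)=\chi\,\mathbf 1_{B_R}$, the terms coming from $\partial_t$ reduce to the traces at $t=\bar t$, and one obtains the exact identity
\[
\int_{\Omega\times\R}\psi\,(\chi^1(\bar t)-\chi^2(\bar t))\,dxda
= A + \bar t\!\int_{B_R\times[0,M]}\!\partial_a\psi\,dU_\phi
+ \int_{[0,\bar t]\times\partial B_R\times[0,M]}\!\!\psi\,d(\mu^1_{\bar t}-\mu^2_{\bar t}),
\]
where $A:=\int_0^{\bar t}\!\int_{B_R\times\R}(\chi^1(t)-\chi^2(t))\,ie^{ia}\!\cdot\!\nabla_x\psi\,dxda\,dt$. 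The identity is legitimate for $\psi\in C^1_c$ because $\chi^1,\chi^2$ are of bounded variation in $t$ and compactly supported in $x$ inside $\Omega$, and because $\mu^1_{\bar t},\mu^2_{\bar t}$ and $\partial_a(\mathbf 1_{B_R\times[0,M]}U_\phi)$ are finite measures; when carrying out the manipulations for $A$ below one may further assume $\psi\in C^\infty_c$ and conclude by density.

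The last two terms are immediate. Since $\phi$ takes values in $[0,M]$, the measure $U_\phi$ is supported in $\Omega\times[0,M]$, so the middle term is bounded by $\bar t\,\|\partial_a\psi\|_{L^\infty}\,|U_\phi|(B_R\times[0,M])\le \bar t\,\|\partial_a\psi\|_{L^\infty}\,\nu(B_R)$, using $\nu=(p_x)_\sharp|U_\phi|$, i.e.\ $|U_\phi|(B_R\times\R)=\nu(B_R)$. The last term is bounded by $\|\psi\|_{L^\infty}\,\|\mu^1_{\bar t}-\mu^2_{\bar t}\|=\|\psi\|_{L^\infty}\,\e_{\bar t}\,\bar t$, by the definition of $\e_{\bar t}$ in Lemma~\ref{L_chi12}.

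So everything reduces to the bound $A\le \frac{\bar t^2}{2}\,\|\nabla_x\psi\|_{L^\infty}\,\nu(B_R)$. Here I would use that $\chi^1(t)-\chi^2(t)=\mathbf 1_{B_R}\big(\chi(\cdot,a)-\chi(\cdot-ie^{ia}t,a)\big)$ vanishes at $t=0$: differentiating the explicit transported field gives $\tfrac{d}{ds}\chi(x-ie^{ia}s,a)=-ie^{ia}\!\cdot\!\nabla_x\chi(x-ie^{ia}s,a)$ in $\mathcal D'_x$, hence $\chi(x,a)-\chi(x-ie^{ia}t,a)=\int_0^t ie^{ia}\!\cdot\!\nabla_x\chi(x-ie^{ia}s,a)\,ds$, and Fubini turns $A$ into
\[
A=\int_0^{\bar t}(\bar t-s)\int_{B_R\times\R}\big(ie^{ia}\!\cdot\!\nabla_x\psi\big)\,\big(ie^{ia}\!\cdot\!\nabla_x\chi\big)(x-ie^{ia}s,a)\,dxda\,ds .
\]
The weight $(\bar t-s)$ integrates to $\tfrac{\bar t^2}{2}$, and one has to show that the inner integral is, uniformly in $s$, at most $\|\nabla_x\psi\|_{L^\infty}\,\nu(B_R)$. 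Using the kinetic identity $ie^{ia}\!\cdot\!\nabla_x\chi=-\partial_a U_\phi$ this inner integral is a pairing of $\nabla_x\psi$ against a translate of the kinetic measure, and the point is to recover the sharp constant $\nu(B_R)=|U_\phi|(B_R\times\R)$ out of it.

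This last step is the main obstacle. A crude estimate bounding $A$ by $\|\nabla_x\psi\|_{L^\infty}\int_0^{\bar t}\|\chi^1(t)-\chi^2(t)\|_{L^1(B_R\times\R)}\,dt$ is not enough: $\|\chi^1(t)-\chi^2(t)\|_{L^1}$ is controlled by $t\,|\partial_a U_\phi|(B_R\times\R)$, and $|\partial_a U_\phi|(B_R\times\R)$ can strictly exceed $\nu(B_R)$ — for instance near a jump of $\phi$ of small amplitude. One therefore has to keep the sign of $\chi(\cdot,a)-\chi(\cdot-ie^{ia}t,a)$, which flips as $a$ crosses the level where the characteristic direction $ie^{ia}$ becomes tangent to a jump line of $\phi$, so that the integral against $ie^{ia}\!\cdot\!\nabla_x\psi$ displays exactly the cancellation bringing the constant down from $|\partial_a U_\phi|(B_R\times\R)$ to $\nu(B_R)$. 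Once this is in hand, collecting the three estimates gives the claimed inequality.
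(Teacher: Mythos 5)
Your setup is correct: pairing the two equations of Lemma~\ref{L_chi12} with $\psi(x,a)\,\mathbf 1_{[0,\bar t]}(t)$ and subtracting gives the identity you wrote, and your treatment of the middle term (bounded by $\bar t\,\|\partial_a\psi\|_{L^\infty}\nu(B_R)$) and of the boundary term (bounded by $\|\psi\|_{L^\infty}\e_{\bar t}\bar t$) is fine. But the remaining bound $A\le \tfrac{\bar t^2}{2}\|\nabla_x\psi\|_{L^\infty}\nu(B_R)$, which you explicitly leave open, is a genuine gap, and the route you sketch cannot close it. After the change of variables $y=x-ie^{ia}s$ and the kinetic identity $ie^{ia}\cdot\nabla_x\chi=-\partial_a U_\phi$, the inner integral becomes a pairing of $U_\phi$ with $\partial_a\big[(ie^{ia}\cdot\nabla_x\psi)(y+ie^{ia}s,a)\big]$; that $\partial_a$-derivative hits the shifted argument $y+ie^{ia}s$ and produces terms in $\nabla_x^2\psi$ and $\partial_a\nabla_x\psi$, not only $\nabla_x\psi$. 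In fact the separate bound on $A$ you aim for is \emph{stronger} than the proposition: comparing your decomposition with the exact identity one finds that $A$ necessarily carries the difference $\partial_a\psi(x+ie^{ia}(\bar t-t),a)-\partial_a\psi(x,a)$ integrated against $U_\phi$, which is of size $\tfrac{\bar t^2}{2}\|\nabla_x\partial_a\psi\|_{L^\infty}\nu(B_R)$ and is not controlled by $\|\nabla_x\psi\|_{L^\infty}$ alone. So splitting off the exact term $\bar t\int\partial_a\psi\,dU_\phi$ first, and then trying to bound $A$ in isolation, over-constrains the problem.

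The paper sidesteps this entirely by choosing the transport-adapted test function
\begin{equation*}
\tilde\psi(t,x,a)=\psi\big(x+ie^{ia}(\bar t-t),a\big),
\end{equation*}
which is constant along the free transport, $\partial_t\tilde\psi+ie^{ia}\cdot\nabla_x\tilde\psi=0$. Multiplying the equation for $\tilde\chi=\chi^1-\chi^2$ by $\tilde\psi$, the $ie^{ia}\cdot\nabla_x$ term drops out and, with $g(t)=\int\tilde\chi(t)\tilde\psi(t)$, one gets $g'(t)=-\int\partial_a\tilde\psi\,dU_\phi+\int\tilde\psi\,d(\mu^1_{\bar t}-\mu^2_{\bar t})_t$ and $g(0)=0$. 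Now
\begin{equation*}
\partial_a\tilde\psi(t,x,a)=\partial_a\psi\big(x+ie^{ia}(\bar t-t),a\big)-(\bar t-t)\,e^{ia}\cdot\nabla_x\psi\big(x+ie^{ia}(\bar t-t),a\big),
\end{equation*}
so $|\partial_a\tilde\psi|\le\|\partial_a\psi\|_{L^\infty}+(\bar t-t)\|\nabla_x\psi\|_{L^\infty}$ uniformly, without any second derivative of $\psi$. Integrating $g'$ over $(0,\bar t)$ then yields exactly the two coefficients $\bar t$ and $\tfrac{\bar t^2}{2}$ in front of $\nu(B_R)$, together with the $\|\psi\|_{L^\infty}\e_{\bar t}\bar t$ boundary term. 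In short: the paper bounds the whole $\partial_a\tilde\psi$ at once, rather than separating off the $\partial_a\psi(x,a)$ piece and trying to estimate what remains; that separation is the source of your difficulty.
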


\begin{proof}
We set $\tilde \chi := \chi^1-\chi^2$ and $\tilde \psi(t,x,a):=\psi(x+ie^{ia}(\bar t - t),a)$. 
It is straightforward to check that
\begin{equation}\label{E_kin_test}
\partial_t(\tilde\chi\tilde \psi)+ie^{ia}\cdot\nabla_x(\tilde \chi \tilde \psi)=-\tilde \psi \partial_a(\L^1 \times U_\phi) + \tilde \psi (\mu^1_{\bar t}-\mu^2_{\bar t}) \qquad \mbox{in }\D'((0,\bar t) \times \Omega \times \R ).
\end{equation}
Let $g:[0,\bar t]\to \R$ be defined by
\begin{equation*}
g(t) = \int_{\Omega\times \R}\tilde \chi(t) \tilde \psi(t) dxda.
\end{equation*}
It follows from \eqref{E_kin_test} that
\begin{equation*}
g'(t)=-\int_{\Omega\times \R }\partial_a\tilde \psi(t)d U_\phi + \int_{\Omega \times \R} \tilde \psi(t) d (\mu^1_{\bar t} - \mu^2_{\bar t})_t
\end{equation*}
holds in the sense of distributions, where $(\mu^1_{\bar t}-\mu^2_{\bar t})_t$ denotes the disintegration of the measure $\mu^1_{\bar t}-\mu^2_{\bar t}$ in $t\in (0,\bar t)$ with respect to $\L^1\llcorner (0,\bar t)$.
Therefore $g\in C^1([0,\bar t])$ and since $g(0)=0$ it holds
\begin{equation*}
\begin{split}
\int_{\Omega \times \R}\psi(\chi^1(\bar t)-\chi^2(\bar t))dx da =  &~ g(\bar t)-g(0) \\
= &~ \int_0^{\bar t}g'(t)dt \\
=&~ -\int_0^{\bar t}\int_{\Omega \times \R} \partial_a\tilde \psi(t)d U_\phi dt  + \int_{(0,\bar t)\times \Omega \times \R} \tilde \psi d (\mu^1_{\bar t} - \mu^2_{\bar t})\\
=&~ - \int_0^{\bar t}\int_{\Omega \times \R} \left(\partial_v\phi -(\bar t - t)e^{ia}\cdot \nabla_x \psi\right)d U_\phi dt + \int_{(0,\bar t)\times \Omega \times \R} \tilde \psi d (\mu^1_{\bar t} - \mu^2_{\bar t}) \\
\le &~ \left(\bar t \|\partial_a\psi\|_{L^\infty} + \frac{\bar t^2}{2}\|\nabla_x \psi\|_{L^\infty}\right)\nu(B_{R}) 
+ \|\psi\|_{L^\infty} \|\mu^1_{\bar t}-\mu^2_{\bar t}\|
\end{split}
\end{equation*}
and this concludes the proof.
\end{proof}

We set $L_{\bar t}=(\e_{\bar t}\vee \bar t)^{-\frac{1}{2}}$ and we consider the anisotropic distance
\begin{equation*}
\begin{split}
d_{\bar t} : (B_R\times [0,M])^2 & \to [0,+\infty) \\
((x_1,a_1),(x_2,a_2)) & \mapsto L_{\bar t}|x_1-x_2| + |a_1-a_2|.
\end{split}
\end{equation*}
A test function $\psi: B_R\times [0,M] \to \R$ is 1-Lipschitz with respect to $d_{\bar t}$ if and only if
\begin{equation*}
\|\partial_a \psi\|_{L^\infty}\le 1 \qquad \mbox{and} \qquad \|\nabla_x\psi\|_{L^\infty}\le L_{\bar t}.
\end{equation*}
Applying Corollary \ref{C_different_masses} to $\mu^1= \chi^1(\bar t) \L^3$, $\mu^2=\chi^2(\bar t) \L^3$ on the space $(B_R\times [0,M], d_{\bar t})$ we obtain the following result as a consequence of Proposition \ref{P_L1_estimate} and Theorem \ref{T_opt_map}.

\begin{corollary}\label{C_T}
There exists $\rho^1_{\bar t}\le \chi^1(\bar t)$ and $\rho^2_{\bar t}\le \chi^2(\bar t)$ such that
\begin{equation*}
\int_{B_R\times [0,M]} (\chi^1(\bar t) - \rho^1_{\bar t})dxda \le \e_{\bar t}\bar t, \qquad 
\int_{B_R\times [0,M]} (\chi^2(\bar t) - \rho^2_{\bar t})dxda \le \e_{\bar t}\bar t
\end{equation*}
and 
\begin{equation*}
W_1(\rho^1_{\bar t}\L^3, \rho^2_{\bar t}\L^3) \le \left(\bar t + \bar t^{\frac{3}{2}}\right)\nu(B_R) + \e_{\bar t}^{\frac{1}{2}}\bar t \left( 2R + \e_{\bar t}^{\frac{1}{2}}M\right).
\end{equation*}
In particular there exists $T=(T_x,T_a):B_R\times [0,M] \to B_R\times [0,M]$ such that 
$T_\sharp \left(\rho^2_{\bar t}\L^3\right) = \rho^1_{\bar t}\L^3$ and
\begin{equation*}
\int_{B_R\times [0,M]}\left(L_{\bar t} |T_x(x,a)-x| + |T_a(x,a)-a|\right)\rho^2_{\bar t}(x,a)dxda \le 
\left(\bar t + \bar t^{\frac{3}{2}}\right)\nu(B_R) + \e_{\bar t}^{\frac{1}{2}}\bar t \left( 2R + \e_{\bar t}^{\frac{1}{2}}M\right).
\end{equation*}
\end{corollary}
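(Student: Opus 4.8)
The plan is to chain together the results already established: Proposition~\ref{P_L1_estimate} controls the discrepancy of $\chi^1(\bar t)$ and $\chi^2(\bar t)$ against test functions, Corollary~\ref{C_different_masses} turns such a control into a genuine $W_1$ bound between suitable submeasures, and Theorem~\ref{T_opt_map} upgrades that bound to an optimal transport map. \emph{Step 1: recast Proposition~\ref{P_L1_estimate} in the metric $d_{\bar t}$.} If $\psi:B_R\times[0,M]\to\R$ has $d_{\bar t}$-Lipschitz constant $|\psi|_\Lip$ then $\|\partial_a\psi\|_{L^\infty}\le|\psi|_\Lip$ and $\|\nabla_x\psi\|_{L^\infty}\le L_{\bar t}|\psi|_\Lip$. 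Since $\chi^1(\bar t)$ and $\chi^2(\bar t)$ are supported in $\overline{B_R}\times[0,M]$ with $\overline{B_R}$ compactly contained in $\Omega$, a routine extension-and-mollification of $\psi$ (producing functions in $C^1_c(\Omega\times\R)$ whose derivative and sup norms converge to those of $\psi$, i.e.\ not inflating the $d_{\bar t}$-Lipschitz constant) lets one apply Proposition~\ref{P_L1_estimate} to $\pm\psi$ and obtain
\[
\Big|\int_{B_R\times[0,M]}\psi\,(\chi^1(\bar t)-\chi^2(\bar t))\,dxda\Big|\le\Big(\bar t+\tfrac{\bar t^2}{2}L_{\bar t}\Big)\nu(B_R)\,|\psi|_\Lip+\e_{\bar t}\bar t\,\|\psi\|_{L^\infty}.
\]
Thus the hypothesis of Corollary~\ref{C_different_masses} holds on the bounded space $(B_R\times[0,M],d_{\bar t})$ with $\mu_1=\chi^1(\bar t)\L^3$, $\mu_2=\chi^2(\bar t)\L^3$, $C_1=(\bar t+\tfrac{\bar t^2}{2}L_{\bar t})\nu(B_R)$ and $C_2=\e_{\bar t}\bar t$.

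\emph{Step 2: apply Corollary~\ref{C_different_masses} and compute the constants.} This produces $\tilde\mu_i\le\mu_i$ with $\|\mu_i-\tilde\mu_i\|\le C_2$ and $W_1(\tilde\mu_1,\tilde\mu_2)\le C_1+C_2\,\diam_{d_{\bar t}}(B_R\times[0,M])$. Since $\tilde\mu_i\le\mu_i\ll\L^3$ we may write $\tilde\mu_i=\rho^i_{\bar t}\L^3$ with $0\le\rho^i_{\bar t}\le\chi^i(\bar t)$ and $\int(\chi^i(\bar t)-\rho^i_{\bar t})\,dxda=\|\mu_i-\tilde\mu_i\|\le\e_{\bar t}\bar t$. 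The $d_{\bar t}$-diameter of $B_R\times[0,M]$ is $2RL_{\bar t}+M$, so inserting $L_{\bar t}=(\e_{\bar t}\vee\bar t)^{-1/2}$ into $C_1+C_2(2RL_{\bar t}+M)$ and using $\tfrac{\bar t^2}{2}L_{\bar t}\le\bar t^{3/2}$ together with $\e_{\bar t}(\e_{\bar t}\vee\bar t)^{-1/2}\le\e_{\bar t}^{1/2}$ yields exactly
\[
W_1(\rho^1_{\bar t}\L^3,\rho^2_{\bar t}\L^3)\le\big(\bar t+\bar t^{3/2}\big)\nu(B_R)+\e_{\bar t}^{1/2}\bar t\big(2R+\e_{\bar t}^{1/2}M\big).
\]

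\emph{Step 3: produce the map $T$.} The measures $\rho^1_{\bar t}\L^3$ and $\rho^2_{\bar t}\L^3$ have a common total mass $m$ (the $W_1$-distance is defined only between measures of equal mass); if $m=0$ the statement is trivial, otherwise normalise both to probability measures on $\R^3=\R^2\times\R$ equipped with the convex norm $|(x,a)|_{D*}:=L_{\bar t}|x|+|a|$, whose induced distance restricted to the convex set $B_R\times[0,M]$ coincides with $d_{\bar t}$, so that the $W_1$ distance computed in $\R^3$ agrees with the one in Step~2 (in particular it is finite). Since $\rho^2_{\bar t}\L^3\ll\L^3$, Theorem~\ref{T_opt_map} applied with source $\rho^2_{\bar t}\L^3/m$ and target $\rho^1_{\bar t}\L^3/m$ gives $T=(T_x,T_a)$ with $T_\sharp(\rho^2_{\bar t}\L^3)=\rho^1_{\bar t}\L^3$ and
\[
\int_{B_R\times[0,M]}\big(L_{\bar t}|T_x(x,a)-x|+|T_a(x,a)-a|\big)\rho^2_{\bar t}(x,a)\,dxda=W_1(\rho^2_{\bar t}\L^3,\rho^1_{\bar t}\L^3),
\]
which is bounded as in Step~2, completing the proof.

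The analytic content sits upstream, in Lemma~\ref{L_chi12} and Proposition~\ref{P_L1_estimate}; here the computation is essentially bookkeeping with the exponents of $\bar t$ and $\e_{\bar t}$. The one point requiring care is the approximation in Step~1: one must check that the passage from a general $d_{\bar t}$-Lipschitz test function to the $C^1_c(\Omega\times\R)$ functions admissible in Proposition~\ref{P_L1_estimate} does not inflate the anisotropic bounds $\|\partial_a\psi\|_{L^\infty}\le|\psi|_\Lip$, $\|\nabla_x\psi\|_{L^\infty}\le L_{\bar t}|\psi|_\Lip$, which is where the fact that $\overline{B_R}$ lies compactly in $\Omega$ is used.
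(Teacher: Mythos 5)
Your proposal is correct and follows exactly the route the paper indicates (Proposition~\ref{P_L1_estimate} read through the metric $d_{\bar t}$, then Corollary~\ref{C_different_masses}, then Theorem~\ref{T_opt_map}); the paper merely states Corollary~\ref{C_T} as an immediate consequence without spelling out the bookkeeping. Your exponent computations with $L_{\bar t}=(\e_{\bar t}\vee\bar t)^{-1/2}$ and the equal-mass/normalization remarks in Step~3 are accurate and fill in the details the paper leaves implicit.
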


\subsection{Construction of approximate characteristics}
\subsubsection{Building block}
For a fixed $\bar t>0$ we consider the following sets:
\begin{equation*}
\begin{split}
E_1:= & \{(x,a) \in B_R\times [0,M]: x+ie^{ia}\bar t \in B_R\}; \\
E_2:= & \{(x,a) \in B_R \times [0,M] : x+ie^{ia}\bar t \notin B_R\}; \\
E_3:= & \{(x,a) \in (\Omega\setminus B_R)\times [0,M]: x +ie^{ia}\bar t \in B_R\}.
\end{split}
\end{equation*}

For every $(x,a) \in E_1$ we define $\gamma_{\bar t,x,a}:[0,\bar t] \to B_R\times [0,M]$ by
\begin{equation*}
\gamma_{\bar t, x,a}(t) = 
\begin{cases}
(x+ie^{ia}t,a) & \mbox{if }t \in [0,\bar t), \\
T(x+ie^{ia}\bar t, a) & \mbox{if }t=\bar t,
\end{cases}
\end{equation*}
where the transport map $T$ is defined in Corollary \ref{C_T}.
For every $(x,a) \in E_2$ we set
\begin{equation*}
t^+(x,a):=\sup\{t \in [0,\bar t]: x+ie^{ia}t \in B_R\}
\end{equation*}
and we define 
$\gamma_{\bar t,x,a}:[0,t^+(x,a)) \to B_R\times [0,M]$ by
\begin{equation*}
\gamma_{\bar t,x,a}(t)=(x+ie^{ia}t,a).
\end{equation*}
For every $(x,a)\in E_3$ we set
\begin{equation*}
t^-(x,a):=\inf\{t \in [0,\bar t]: x+ie^{ia}t \in B_R\}
\end{equation*}
and we define $\gamma_{\bar t,x,a}:(t^-(x,a),\bar t] \to B_R\times [0,M]$ by
\begin{equation*}
\gamma_{\bar t,x,a}(t)=
\begin{cases}
(x+ie^{ia}t,a) & \mbox{if }t \in (t^-(x,a),\bar t) \\
T(x+ie^{ia}\bar t, a) & \mbox{if }t=\bar t.
\end{cases}
\end{equation*}

\subsubsection{Approximate characteristics}\label{Ss_approximate}

Fix $n \in N$ and set $\bar t_n=2^{-n}$. 
For every $(x,a)\in E_2$ we consider the curve 
\begin{equation*}
\gamma^{0,n}_{x,a}:\left(t^-_{\gamma^{0,n}_{x,a}}, t^+_{\gamma^{0,n}_{x,a}}\right) \to B_R \times [0,M]
\end{equation*}
with
\begin{equation*}
t^-_{\gamma^{0,n}_{x,a}}=0, \quad t^+_{\gamma^{0,n}_{x,a}}=t^+(x,a) \quad \mbox{and} \quad \gamma^{0,n}_{x,a}(t)=\gamma_{2^{-n},x,a}(t) \quad \forall t \in \left(t^-_{\gamma^{0,n}_{x,a}}, t^+_{\gamma^{0,n}_{x,a}}\right).
\end{equation*}

For every $(x,a) \in E_1$ we define 
\begin{equation*}
\gamma^{0,n}_{x,a}:\left(t^-_{\gamma^{0,n}_{x,a}}, t^+_{\gamma^{0,n}_{x,a}}\right) \to B_R \times [0,M]
\end{equation*}
with
\begin{equation*}
t^-_{\gamma^{0,n}_{x,a}}=0, \qquad t^+_{\gamma^{0,n}_{x,a}} \ge 2^{-n}
\end{equation*}
to be determined in the construction and 
\begin{equation*}
\gamma^{0,n}_{x,a}(t) = \gamma_{2^{-n},x,a}(t) \qquad \forall t \in \left(t^-_{\gamma^{0,n}_{x,a}}, 2^{-n}\right].
\end{equation*}

For every $k=1,\ldots, 2^n$ and for every $(x,a) \in E_3$ we introduce a curve
\begin{equation*}
\gamma^{k,n}_{x,a}:\left(t^-_{\gamma^{k,n}_{x,a}}, t^+_{\gamma^{k,n}_{x,a}}\right) \to B_R \times [0,M]
\end{equation*}
with 
\begin{equation*}
t^-_{\gamma^{k,n}_{x,a}} = (k-1)2^{-n} + t^-(x,a), \qquad t^+_{\gamma^{k,n}_{x,a}} \ge k2^{-n}
\end{equation*}
to be determined and 
\begin{equation*}
\gamma^{k,n}_{x,a}(t) = \gamma_{2^{-n},x,a}(t-(k-1)2^{-n}) \qquad \forall t \in \left(t^-_{\gamma^{k,n}_{x,a}}, k2^{-n}\right].
\end{equation*}
It remains to define the evolution of the curves $\gamma^{0,n}_{x,a}$ for $(x,a)\in E_1$ and $t\ge 2^{-n}$ 
and of the curves $\gamma^{k,n}_{x,a}$ for $(x,a) \in E_3$ and $t \ge k2^{-n}$.
Let us fix $k=1,\ldots, 2^n$ and $(x,a) \in E_3$.
We define the evolution of $\gamma^{k,n}_{x,a}$ by recursion:
assume that $\gamma^{k,n}_{x,a}$ is defined on $(t^-_{\gamma^{k,n}_{x,a}}, l2^{-n}]$ for some $l \ge k$.
If $l=2^n$ we set $t^+_{\gamma^{k,n}_{x,a}}=1$ otherwise, if $l<2^n$ we distinguish two cases.

If $\gamma^{k,n}_{x,a}(l2^{-n}) \in E_2$, then we set 
\begin{equation*}
t^+_{\gamma^{k,n}_{x,a}}= l2^{-n} + t^+\left(\gamma^{k,n}_{x,a}(l2^{-n})\right)
\end{equation*}
and
\begin{equation*}
\gamma^{k,n}_{x,a}(t)= \gamma_{2^{-n},\gamma^{k,n}_{x,a}(l2^{-n)}}(t-l2^{-n}) \qquad \forall t \in \left(l2^{-n},t^+_{\gamma^{k,n}_{x,a}}\right).
\end{equation*}
If instead $\gamma^{k,n}_{x,a}(l2^{-n}) \in E_1$, then we extend $\gamma^{k,n}_{x,a}$ on the whole interval
$(l2^{-n},(l+1)2^{-n}]$ by setting
\begin{equation*}
\gamma^{k,n}_{x,a}(t) = \gamma_{2^{-n},\gamma^{k,n}_{x,a}(l2^{-n})}(t-l2^{-n}) \qquad \forall t \in (l2^{-n},(l+1)2^{-n}].
\end{equation*}
The extension of the curves $\gamma^{0,n}_{x,a}$ for $(x,a)\in E_1$ is defined by the same procedure described above for the curves $\gamma^{k,n}_{x,a}$ for $(x,a) \in E_3$ with $k=1$.

\subsection{Approximate Lagrangian representation}
The approximate characteristics built in the previous section belong to the space
\begin{equation*}
\tilde \Gamma:=\left\{(\gamma,t^-_\gamma,t^+_\gamma) : 0\le t^-_\gamma \le t^+_\gamma\le 1, \gamma \in \BV\big((t^-_\gamma,t^+_\gamma);B_R\times[0,M]\big)\right\}.
\end{equation*}
For every $n\in \N$ sufficiently large we define $\omega_n \in \M(\tilde \Gamma)$ by
\begin{equation}\label{E_def_omega_n}
\omega_n = \int_{(B_R\times [0,M])\cap H_\phi}\delta_{\gamma^{0,n}_{a,x},t^-_{\gamma^{0,n}_{a,x}}, t^+_{\gamma^{0,n}_{a,x}}} dx da + \sum_{k=1}^{2^n}\int_{E_3\cap H_\phi} \delta_{\gamma^{k,n}_{a,x},t^-_{\gamma^{k,n}_{a,x}}, t^+_{\gamma^{k,n}_{a,x}}} dx da,
\end{equation}
where the curves $\gamma^{k,n}_{x,a}$ are defined in Section \ref{Ss_approximate}.

\begin{lemma}\label{L_hor_ver}
Let $\omega_n$ be defined in \eqref{E_def_omega_n}. Then the following estimates hold:
\begin{equation}\label{E_est_char1}
e_h(n):= \int_{\tilde \Gamma}\sup_{t \in (t^-_\gamma,t^+_\gamma)} \left| \gamma_x(t) - \gamma_x(t^-_\gamma) - 
\int_{t^-_\gamma}^t i e^{i\gamma_a(s)}ds \right| d\omega_n(\gamma)   = o(1) \qquad \mbox{as }n\to \infty
\end{equation}
\begin{equation}\label{E_est_vert}
e_v(n):=\int_{\tilde \Gamma}\TV_{(t^-_\gamma,t^+_\gamma)}\gamma_a d\omega_n(\gamma)  \le \nu(B_R) + o(1) \qquad \mbox{as }n\to \infty.
\end{equation}
\end{lemma}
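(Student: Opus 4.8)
The plan is to exploit the explicitly piecewise structure of the approximate characteristics built in Section~\ref{Ss_approximate}, following the scheme of \cite{M_Lebesgue}. For $\omega_n$-a.e.\ $\gamma$ and each $j$, on the subinterval $\big((j-1)2^{-n},j2^{-n}\big)\cap(t^-_\gamma,t^+_\gamma)$ the curve $\gamma$ is a straight characteristic segment, so $\gamma_a$ is constant there and the characteristic equation \eqref{E_characteristic} holds identically; the only jumps of $\gamma$ occur at the interior grid times $t=j2^{-n}$, where $\gamma$ passes from its flowed value $z^j(\gamma)$ (the left limit of $\gamma$ at $j2^{-n}$) to $T(z^j(\gamma))$, with $T=(T_x,T_a)$ the optimal map provided by Corollary~\ref{C_T} for $\bar t=2^{-n}$. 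Writing $z^j(\gamma)=(z^j_x(\gamma),z^j_a(\gamma))$, setting $\Delta^j_x(\gamma):=|T_x(z^j(\gamma))-z^j_x(\gamma)|$ and $\Delta^j_a(\gamma):=|T_a(z^j(\gamma))-z^j_a(\gamma)|$ when $j2^{-n}\in(t^-_\gamma,t^+_\gamma)$ and $\Delta^j_x=\Delta^j_a:=0$ otherwise, the function $\gamma_a$ is a pure jump function whose total variation is $\sum_j\Delta^j_a(\gamma)$, and the $x$-defect $t\mapsto\gamma_x(t)-\gamma_x(t^-_\gamma)-\int_{t^-_\gamma}^t ie^{i\gamma_a(s)}\,ds$ vanishes at $t^-_\gamma$, stays constant between consecutive grid times, and jumps by the $x$-part of the jump of $\gamma$ at each $j2^{-n}$. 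Hence, for $\omega_n$-a.e.\ $\gamma$,
\[
\sup_{t\in(t^-_\gamma,t^+_\gamma)}\left|\gamma_x(t)-\gamma_x(t^-_\gamma)-\int_{t^-_\gamma}^t ie^{i\gamma_a(s)}\,ds\right|\le\sum_{j=1}^{2^n-1}\Delta^j_x(\gamma),\qquad\TV_{(t^-_\gamma,t^+_\gamma)}\gamma_a=\sum_{j=1}^{2^n-1}\Delta^j_a(\gamma),
\]
and integrating against $\omega_n$ reduces the lemma to bounding $\sum_{j=1}^{2^n-1}\int_{\tilde\Gamma}\Delta^j_x\,d\omega_n$ and $\sum_{j=1}^{2^n-1}\int_{\tilde\Gamma}\Delta^j_a\,d\omega_n$.

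Next I would estimate each term $\int_{\tilde\Gamma}\Delta^j_x\,d\omega_n$, $\int_{\tilde\Gamma}\Delta^j_a\,d\omega_n$ through the transport cost of Corollary~\ref{C_T}. The key point — and here some care is needed — is that the bookkeeping of the construction (the splitting into $E_1,E_2,E_3$ and the definition \eqref{E_def_omega_n}) can be arranged so that, for each grid time, the push-forward under $\gamma\mapsto z^j(\gamma)$ of the restriction of $\omega_n$ to the curves with $j2^{-n}\in(t^-_\gamma,t^+_\gamma)$ is a measure \emph{dominated} by $\chi^2(\bar t)\,\mathscr L^3$ on $B_R\times[0,M]$: the curves already inside $B_R$ at time $(j-1)2^{-n}$ contribute, after flowing for time $2^{-n}$, at most the part of $\chi^2(\bar t)$ whose characteristic preimage lies in $B_R$, while the curves entering from $E_3$ at step $j$ contribute exactly the part whose preimage lies outside $B_R$. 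Since $T$ maps $B_R\times[0,M]$ into itself, $\Delta^j_x\le 2R$ and $\Delta^j_a\le M$ pointwise, so splitting $\chi^2(\bar t)\,\mathscr L^3=\rho^2_{\bar t}\,\mathscr L^3+\big(\chi^2(\bar t)-\rho^2_{\bar t}\big)\mathscr L^3$ with $\|\big(\chi^2(\bar t)-\rho^2_{\bar t}\big)\mathscr L^3\|\le\e_{\bar t}\bar t$ (Corollary~\ref{C_T}) gives, for every $j$,
\[
\int_{\tilde\Gamma}\Delta^j_a\,d\omega_n\le\int_{B_R\times[0,M]}|T_a-a|\,\rho^2_{\bar t}\,dx\,da+M\e_{\bar t}\bar t,\qquad\int_{\tilde\Gamma}\Delta^j_x\,d\omega_n\le L_{\bar t}^{-1}\!\int_{B_R\times[0,M]}\!\big(L_{\bar t}|T_x-x|+|T_a-a|\big)\rho^2_{\bar t}\,dx\,da+2R\e_{\bar t}\bar t.
\]
By Corollary~\ref{C_T}, $\int_{B_R\times[0,M]}\big(L_{\bar t}|T_x-x|+|T_a-a|\big)\rho^2_{\bar t}\,dx\,da\le\big(\bar t+\bar t^{3/2}\big)\nu(B_R)+\e_{\bar t}^{1/2}\bar t\big(2R+\e_{\bar t}^{1/2}M\big)$, and, since $L_{\bar t}^{-1}=(\e_{\bar t}\vee\bar t)^{1/2}$, both right-hand sides are then controlled uniformly in $j$.

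Summing these bounds over $j=1,\dots,2^n-1$ and using $2^n\bar t=1$, I would obtain
\[
e_v(n)\le\big(1+\bar t^{1/2}\big)\nu(B_R)+\e_{\bar t}^{1/2}\big(2R+\e_{\bar t}^{1/2}M\big)+M\e_{\bar t},\qquad e_h(n)\le(\e_{\bar t}\vee\bar t)^{1/2}\Big[\big(1+\bar t^{1/2}\big)\nu(B_R)+\e_{\bar t}^{1/2}\big(2R+\e_{\bar t}^{1/2}M\big)\Big]+2R\,\e_{\bar t}.
\]
Since $\bar t=2^{-n}\to0$ and, by Lemma~\ref{L_chi12}, $\e_{\bar t}\to0$ as $n\to\infty$, the right-hand sides equal $\nu(B_R)+o(1)$ and $o(1)$ respectively, which are precisely \eqref{E_est_vert} and \eqref{E_est_char1}; the analogous estimates for a Lagrangian representation of the epigraph are obtained in the same way starting from $E_\phi$ instead of $H_\phi$.

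I expect the main obstacle to be the domination statement used in the second step, namely that at every grid time the $z^j$-push-forward of the relevant part of $\omega_n$ is bounded above by $\chi^2(\bar t)\,\mathscr L^3$. Establishing it requires tracking carefully, step by step, the curves crossing $\partial B_R$ between consecutive grid times — which is exactly where the sets $E_1,E_2,E_3$ and the boundary comparison of Lemma~\ref{L_chi12}, together with the smallness $\|\mu^1_{\bar t}-\mu^2_{\bar t}\|=\e_{\bar t}\bar t\to0$, enter — and checking that the mass defect produced at each correction step, due to $T$ transporting only $\rho^2_{\bar t}\,\mathscr L^3\le\chi^2(\bar t)\,\mathscr L^3$ rather than the full current configuration, can only make the flowed configuration smaller, hence is harmless for the one-sided bound and does not accumulate over the $2^n$ steps. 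Once this bookkeeping is in place, the two estimates follow from the elementary summation above.
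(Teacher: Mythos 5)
Your proposal follows essentially the same route as the paper: you reduce both quantities to a sum over grid times of the $x$- and $a$-displacements produced by the optimal map $T$ from Corollary~\ref{C_T}, you bound these by splitting $\chi^2(\bar t)\,\mathscr L^3$ into $\rho^2_{\bar t}\,\mathscr L^3$ plus a small excess, and you sum; the crucial domination $(e_{l2^{-n}-})_\sharp\omega_n\le\chi^2(\bar t)\,\mathscr L^3$, which you flag as the delicate point, is exactly what the paper invokes (``by construction''), and you correctly identify it as following from the $E_1,E_2,E_3$ bookkeeping. The only differences are cosmetic: slightly coarser constants (you majorize $|T_x-x|$ by $L_{\bar t}^{-1}\big(L_{\bar t}|T_x-x|+|T_a-a|\big)$ rather than keeping the sharper split) and different notation for the left-limit evaluation map.
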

\begin{proof}
 Since for $\omega_n$-a.e. $(\gamma,t^-_\gamma,t^+_\gamma) \in \tilde \Gamma$ it holds
 \begin{equation*}
 \dot\gamma_x(t) = ie^{i\gamma_a(t)} \qquad \forall t \in (\gamma,t^-_\gamma,t^+_\gamma)\setminus 2^{-n}\N,
 \end{equation*}
 then we have 
 \begin{equation}\label{E_single}
 \begin{split}
\sup_{t \in (t^-_\gamma,t^+_\gamma)} \left| \gamma_x(t) - \gamma_x(t^-_\gamma) - \int_{t^-_\gamma}^t i e^{i\gamma_a(s)}ds \right|  
 \le & ~\sum_{l^-(\gamma)}^{l^+(\gamma)} |\gamma_x(l2^{-n})-\gamma_x(l2^{-n}-)| \\
 = &~ \sum_{l^-(\gamma)}^{l^+(\gamma)} |T_x(\gamma(l2^{-n}-)-\gamma_x(l2^{-n}-)|,
\end{split}
 \end{equation}
 where $l^-(\gamma)= \inf ( 2^{-n}\Z \cap (t^-_\gamma,t^+_\gamma))$ and $l^+(\gamma)= \sup ( 2^{-n}\Z \cap (t^-_\gamma,t^+_\gamma))$.
 
 Integrating \eqref{E_single} with respect to $\omega_n$, it follows by  Corollary \ref{C_T} with $\bar t = 2^{-n}$ that 
\begin{equation}\label{E_est_char2}
\begin{split}
e_h(n)
%\int_{\tilde \Gamma}\left| \gamma_x(t) - \gamma_x(t^-_\gamma) - \int_{t^-_\gamma}^t i e^{i\gamma_a(s)}ds \right|  d\omega_n(\gamma)
 \le & ~ \sum_{l=1}^{2^n-1}\int_{X} |T_x(x,a)-x|   d (e_{l2^{-n}-})_\sharp \omega_n \\
 \le &~ \sum_{l=1}^{2^n-1}\left(\int_{X} |T_x(x,a)-x|  \rho^2_{\bar t}(x,a)dxda  +
2R\| ((e_{l2^{-n}-})_\sharp \omega_n- \rho^2_{\bar t}\L^3)^+  \|\right) \\
 \le &~ \frac{2^n}{L_{2^{-n}}} \left(2^{-n} + 2^{\frac{-3n}{2}}\right)\nu(B_R) + \e_{2^{-n}}^{\frac{1}{2}} \left( 2R + \e_{2^{-n}}^{\frac{1}{2}}M\right)+2R \sum_{l=1}^{2^n-1}\|  ((e_{l2^{-n}-})_\sharp \omega_n- \rho^2_{\bar t}\L^3)^+ \|,
\end{split}
\end{equation}
where $X=B_R\times [0,M]$ and $e_{t-}:\tilde \Gamma(t) \to X$ is defined by $e_{t-}(\gamma)= \lim_{t'\to t-}\gamma(t')$.
Since by construction $(e_{l2^{-n}-})_\sharp \omega_n \le \chi^2(\bar t)\L^3$ and $\rho^2_{\bar t}\le \chi^2(\bar t)$ with
$\| (\chi^2(\bar t)-\rho^2_{\bar t})\L^3\| \le 2^{-n}\e_{2^{-n}}$, then for every $l=1,\ldots, 2^n-1$
 it holds
 \begin{equation}\label{E_positive_part}
 \|  ((e_{l2^{-n}-})_\sharp \omega_n- \rho^2_{\bar t}\L^3)^+ \| \le  2^{-n}\e_{2^{-n}}.
 \end{equation}
 Plugging \eqref{E_positive_part} into \eqref{E_est_char2}, we immediately get \eqref{E_est_char1}.

We now prove \eqref{E_est_vert}. Since $\gamma_a$ is constant in each connected component of
$(t^-_\gamma, t^+_\gamma) \setminus 2^{-n}\N$ for $\omega_n$-a.e. $\gamma$, it follows by Corollary \ref{C_T} that 
\begin{equation*}
\begin{split}
\int_{\tilde \Gamma}\TV_{(t^-_\gamma,t^+_\gamma)}\gamma_a d\omega_n(\gamma) = &~ 
\sum_{l=1}^{2^n-1} \int_{\tilde \Gamma(l2^{-n})}|T_a(\gamma(l2^{-n}-)-\gamma_a(l2^{-n}-)|d\omega_n(\gamma) \\
= &~ \sum_{l=1}^{2^n-1} \int_X |T_a(x,a)-a| d (e_{l2^{-n}-})_\sharp \omega_n \\
\le &~ \sum_{l=1}^{2^n-1} \int_X |T_a(x,a)-a| \rho^2_{\bar t}(x,a)dxda + M \|\big((e_{l2^{-n}-})_\sharp \omega_n - \rho^2_{\bar t} \L^3\big)^+\| \\
\le &~ 2^n \left[\left(2^{-n} + 2^{\frac{-3n}{2}}\right)\nu(B_R) + \e_{2^{-n}}^{\frac{1}{2}}\bar t \left( 2R + \e_{2^{-n}}^{\frac{1}{2}}M\right)\right] + M\e_{2^{-n}},
\end{split}
\end{equation*}
which implies \eqref{E_est_vert}.
\end{proof}

We now show that $(e_t)_\sharp \omega_n$ approximates $\chi \L^3$ in the strong
topology of measures for every $t \in 2^{-n}\N \cap [0,1)$. This property and the weak continuity estimate
provided in Proposition \ref{P_L1_estimate} will guarantee Property (1) in Definition \ref{D_Lagr}.

\begin{lemma}\label{L_cumulative}
For every $l=0,\ldots, 2^n-1$ it holds
\begin{equation}\label{E_est_l}
\| (e_{l2^{-n}})_\sharp \omega_n - \chi \L^3\| \le  2^{-n+1}l\e_{2^{-n}}.
\end{equation}
Moreover for every $t \in [l2^{-n},(l+1)2^{-n})$ and every $\psi \in C^\infty_c(B_R\times [0,M])$ it holds
\begin{equation}\label{E_weak_cont}
\left| \int_X \psi d (e_t)_\sharp \omega_n - \int_X \psi d (e_{l2^{-n}})_\sharp \omega_n  \right|\le 
2^{-n}\left( 2M \H^1(\partial B_R) \|\psi\|_{L^\infty} + \|\nabla \psi\|_{L^\infty} \L^3(H_\phi) \right).
\end{equation}
\end{lemma}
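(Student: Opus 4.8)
The plan is to first establish \eqref{E_est_l} by induction on $l$, and then deduce \eqref{E_weak_cont} by a direct calculation that tracks the transport occurring inside a single time-step interval.

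\emph{Step 1: the cumulative estimate \eqref{E_est_l}.} For $l=0$, equality \eqref{E_repr_formula} at the initial time is built into the definition \eqref{E_def_omega_n} of $\omega_n$: by construction $(e_0)_\sharp \omega_n = \L^3 \llcorner (H_\phi \cap (B_R\times[0,M])) = \chi\L^3$ (restricted to $B_R$), so the right-hand side is $0$. For the inductive step I would compare $(e_{(l+1)2^{-n}})_\sharp \omega_n$ with $(e_{l2^{-n}})_\sharp \omega_n$. On the interval $(l2^{-n},(l+1)2^{-n})$ each curve in the support of $\omega_n$ either translates along a characteristic by $\bar t = 2^{-n}$ and then undergoes the jump given by the transport map $T$ of Corollary \ref{C_T}, or it exits/enters $B_R$ through $\partial B_R$. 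The point is that, up to the parts of the mass leaving or entering through $\partial B_R$ (which are absorbed into the evolution of $\chi^1,\chi^2$), the map $e_{l2^{-n}} \mapsto e_{(l+1)2^{-n}}$ is exactly the composition of the free transport $x\mapsto x+ie^{ia}\bar t$ with $T$, and $T_\sharp(\rho^2_{\bar t}\L^3) = \rho^1_{\bar t}\L^3$. Since $\chi^1(\bar t)\L^3 = \chi\L^3$ and $\|\chi^i(\bar t) - \rho^i_{\bar t}\|_{L^1} \le \e_{2^{-n}} 2^{-n}$ by Corollary \ref{C_T}, one step introduces an error of at most $2\cdot 2^{-n}\e_{2^{-n}}$ in total variation; adding this to the inductive hypothesis $\|(e_{l2^{-n}})_\sharp\omega_n - \chi\L^3\|\le 2^{-n+1}l\e_{2^{-n}}$ gives the bound for $l+1$. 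Here I would lean on the identities already used in the proof of Lemma \ref{L_hor_ver}, in particular that $(e_{l2^{-n}-})_\sharp\omega_n \le \chi^2(\bar t)\L^3$, to keep track of signs.

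\emph{Step 2: continuity within a time-step \eqref{E_weak_cont}.} Fix $t\in[l2^{-n},(l+1)2^{-n})$ and $\psi\in C^\infty_c(B_R\times[0,M])$. For $\omega_n$-a.e.\ curve $\gamma$ with $t\in(t^-_\gamma,t^+_\gamma)$, on the sub-interval $(l2^{-n},t]$ the curve moves as $\gamma(s) = (\gamma_x(l2^{-n})+ie^{i\gamma_a}(s-l2^{-n}),\gamma_a)$ until it either reaches time $(l+1)2^{-n}$ or leaves $B_R$. Hence
\begin{equation*}
\int_X \psi\, d(e_t)_\sharp\omega_n - \int_X \psi\, d(e_{l2^{-n}})_\sharp\omega_n = \int \big(\psi(\gamma(t)) - \psi(\gamma(l2^{-n}))\big)\, d\omega_n(\gamma) - (\text{mass that exited through }\partial B_R),
\end{equation*}
and $|\psi(\gamma(t))-\psi(\gamma(l2^{-n}))| \le \|\nabla\psi\|_{L^\infty}\,|t-l2^{-n}| \le 2^{-n}\|\nabla\psi\|_{L^\infty}$ for the first piece, integrated against the mass $(e_{l2^{-n}})_\sharp\omega_n \le \chi\L^3$, giving the term $2^{-n}\|\nabla\psi\|_{L^\infty}\L^3(H_\phi)$. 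The mass crossing $\partial B_R$ in a time window of length $2^{-n}$ is controlled by $2^{-n}$ times the flux through $\partial B_R$, which is at most $2^{-n}\,M\,\H^1(\partial B_R)$ in each direction since $|ie^{ia}\cdot n|\le 1$ and $a$ ranges over $[0,M]$; pairing this with $\|\psi\|_{L^\infty}$ produces the remaining $2^{-n}\cdot 2M\H^1(\partial B_R)\|\psi\|_{L^\infty}$.

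\emph{Main obstacle.} The delicate point is the accurate bookkeeping of boundary effects: at each time step, mass leaves $B_R$ through $E_2$, re-enters through $E_3$, and the transport map $T$ acts only on the portion that stays inside. I expect the crux to be verifying cleanly that this boundary mass does not accumulate uncontrollably across the $2^n$ steps — this is exactly why the error in \eqref{E_est_l} is linear in $l$ with the small factor $\e_{2^{-n}}$ rather than growing like $\H^1(\partial B_R)$ per step, and it uses that the flux terms are already encoded in $\mu^1_{\bar t},\mu^2_{\bar t}$ and hence in $\e_{\bar t}$ via Lemma \ref{L_chi12}, rather than being a genuine loss. The inequality \eqref{E_weak_cont} itself is then a routine consequence once the one-step geometry is set up, since there only a single window of length $2^{-n}$ is involved.
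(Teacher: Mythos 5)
Your plan is correct and follows essentially the same route as the paper: for \eqref{E_est_l} an induction on $l$ that gains a $2\cdot 2^{-n}\e_{2^{-n}}$ error per step by writing $(e_{(l+1)2^{-n}})_\sharp\omega_n = T_\sharp\big[(e_{(l+1)2^{-n}-})_\sharp\omega_n\big]$, comparing $T_\sharp(\rho^2_{\bar t}\L^3)=\rho^1_{\bar t}\L^3$ with $\chi^i(\bar t)\L^3$ via Corollary \ref{C_T}, and observing that the free-flow left-limit map is a contraction on the difference measure (the $E_2/E_3$ boundary mass cancels against the corresponding parts of $\chi^2(\bar t)\L^3$); for \eqref{E_weak_cont} a split into curves persistent on $(l2^{-n},t]$ (controlled by the Lipschitz bound on $\psi$ and unit characteristic speed) and curves born or dying in the window (controlled by the boundary flux $2^{-n}M\H^1(\partial B_R)$ in each direction). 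This is the proof in the paper. The only thing worth flagging is that your claim $(e_{l2^{-n}})_\sharp\omega_n\le \chi\L^3$ (mirroring the paper's $\omega_n(\tilde\Gamma(t))\le\L^3(H_\phi)$) is not literally exact for $l\ge 2$ because $T$ also transports the excess $(\chi^2(\bar t)-\rho^2_{\bar t})\L^3$; it holds up to the same $O(\e_{2^{-n}})$ slack already present in \eqref{E_est_l}, which is harmless but should be acknowledged if you write the argument out in full.
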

\begin{proof}
The case $l=0$ follows by the definition of $\omega_n$. In order to get \eqref{E_est_l} we prove that for every 
$l=0,\ldots, 2^n-2$ it holds
\begin{equation*}
\| (e_{(l+1)2^{-n}})_\sharp \omega_n - \chi \L^3\| \le \| (e_{l2^{-n}})_\sharp \omega_n - \chi \L^3\|
+2^{-n+1} \e_{2^{-n}}.
\end{equation*}
Indeed
\begin{equation*}
\begin{split}
\| (e_{(l+1)2^{-n}})_\sharp \omega_n - \chi \L^3\| \le &~ 
\| (e_{(l+1)2^{-n}})_\sharp \omega_n - \rho^1_{\bar t} \L^3\| + \| \rho^1_{\bar t} \L^3 - \chi \L^3\| \\
= &~ \| T_\sharp (e_{(l+1)2^{-n}-})_\sharp \omega_n - T_\sharp (\rho^2_{\bar t} \L^3)\| + \| \rho^1_{\bar t} \L^3 - \chi \L^3\| \\
\le &~ \|(e_{(l+1)2^{-n}-})_\sharp \omega_n -  \rho^2_{\bar t} \L^3\| + 2^{-n}\e_{2^{-n}} \\
\le &~  \|(e_{(l+1)2^{-n}-})_\sharp \omega_n - \chi^2(\bar t)\L^3)\| + \| (\chi^2(\bar t)-\rho^2_{\bar t}\L^3)\|  + 2^{-n}\e_{2^{-n}} \\
\le &~  \| (e_{l2^{-n}})_\sharp \omega_n - \chi \L^3\| + 2 \cdot 2^{-n}\e_{2^{-n}}.
\end{split}
\end{equation*}
Inequality \eqref{E_weak_cont} follows by
\begin{equation*}
\begin{split}
\left| \int_X \psi d (e_t)_\sharp \omega_n - \int_X \psi d (e_{l2^{-n}})_\sharp \omega_n  \right|\le &~ 
\left| \int_X \psi d (e_t)_\sharp \omega_n\llcorner \{t^-_\gamma>l2^{-n}\} \right| + \left| \int_X \psi d (e_{l2^{-n}})_\sharp \omega_n \llcorner \{t^+_\gamma < t\}  \right| \\
&~ + \left| \int_X \psi d (e_t)_\sharp \omega_n\llcorner \{t^-_\gamma\le l2^{-n}\} - 
 \int_X \psi d (e_{l2^{-n}})_\sharp \omega_n \llcorner \{t^+_\gamma > t\}  \right| \\
 \le &~ 2\cdot 2^{-n}\H^1(\partial B_R) M \|\psi\|_{L^\infty} + \|\nabla \psi\|_{L^\infty} 2^{-n}\omega_n(\tilde \Gamma(t)) \\
 \le &~ 2^{-n+1}\H^1(\partial B_R) M \|\psi\|_{L^\infty} + \|\nabla \psi\|_{L^\infty} 2^{-n}\L^3(H_\phi). \qedhere
\end{split}
\end{equation*}
\end{proof}

\subsection{Compactness of $\omega_n$ and existence of a Lagrangian representation}
We consider on $\tilde \Gamma$ the topology $\tau$ that induces the following convergence:
$(\gamma_n,t^-_{\gamma_n},t^+_{\gamma_n})$ converges to $(\gamma,t^-_\gamma,t^+_\gamma)$ if $t^\pm_{\gamma_n} \to t^\pm_\gamma$ with respect to the Euclidean topology in $\R$ and there exist extensions $\tilde \gamma,\tilde \gamma_n$ of $\gamma, \gamma_n$ defined on $(0,1)$ such that the horizonal components
$\tilde \gamma_{n,x}$ converge to $\tilde \gamma_x$ uniformly and the vertical components $\tilde \gamma_{n,a}$ converge to $\tilde \gamma_a$ in $L^1(0,1)$. 
%We consider on $\tilde \Gamma\subset \Gamma \times [0,1]^2$ the product topology $\tau$ between the topology $\tau'$ on $\Gamma$ and the euclidean topology on $[0,1]^2$. The topology $\tau'$ is induced by the uniform convergence of the horizontal components $\gamma_x$ and the $L^1$ convergence of the vertical components $\gamma_a$.
\begin{lemma}
The sequence of measures $\omega_n$ defined in \eqref{E_def_omega_n} is bounded and tight in $\M(\tilde \Gamma)$, 
namely for every $\e>0$ there exists $K_\e \subset \tilde \Gamma$ such that for every $n\in \N$ it holds
\begin{equation*}
\omega_n\left(\tilde \Gamma \setminus K_\e\right) <\e.
\end{equation*}
\end{lemma}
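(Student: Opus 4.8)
The statement asserts boundedness and tightness of the sequence $\omega_n$ in $\M(\tilde\Gamma)$. Boundedness is immediate: by the representation formula structure, $\omega_n(\tilde\Gamma)$ equals the total amount of ``source mass'' injected into the construction, namely $\L^3(H_\phi)$ from the term based at $B_R\times[0,M]$ plus, for each of the $2^n$ layers, the mass $\L^3\bigl(\{(x,a)\in E_3\cap H_\phi\}\bigr)$ coming from the boundary inflow $E_3$. Since $E_3\subset(\Omega\setminus B_R)\times[0,M]$ is the set of $(x,a)$ with $x+ie^{ia}\bar t_n\in B_R$, it is contained in a $\bar t_n=2^{-n}$-neighbourhood of $\partial B_R$, so $\L^3(E_3\cap H_\phi)\le C2^{-n}$ with $C$ depending only on $R,M$; multiplying by $2^n$ layers gives a bound independent of $n$. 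Hence $\sup_n\omega_n(\tilde\Gamma)\le \L^3(H_\phi)+C$.

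For tightness, the plan is to exhibit, for each $\e>0$, a $\tau$-compact set $K_\e\subset\tilde\Gamma$ carrying all but $\e$ of each $\omega_n$. The natural candidate is a set of the form
\begin{equation*}
K_\e=\Bigl\{(\gamma,t^-_\gamma,t^+_\gamma)\in\tilde\Gamma:\ \TV_{(t^-_\gamma,t^+_\gamma)}\gamma_a\le \Lambda_\e,\ \gamma_x\ \text{is }1\text{-Lipschitz}\Bigr\},
\end{equation*}
with $\Lambda_\e$ chosen large. Relative $\tau$-compactness of such a set follows from a diagonal/Helly argument: the endpoints $t^\pm_\gamma$ live in the compact $\{0\le t^-\le t^+\le 1\}$; the horizontal components extended constantly outside $(t^-_\gamma,t^+_\gamma)$ form an equi-Lipschitz, uniformly bounded family on $(0,1)$, hence precompact in $C([0,1])$ by Ascoli--Arzel\`a; and the vertical components extended suitably have uniformly bounded total variation, hence are precompact in $L^1(0,1)$ by Helly's selection theorem. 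One must check that the limit of a $\tau$-convergent sequence in $K_\e$ still lies in $\tilde\Gamma$ (i.e. the limiting curve takes values in $B_R\times[0,M]$, which is closed, and has bounded variation, which is lower semicontinuous under $L^1$ convergence of $\gamma_a$ together with Lipschitz continuity of $\gamma_x$) — this gives $\tau$-closedness of $K_\e$, so $K_\e$ is $\tau$-compact. The Lipschitz bound on $\gamma_x$ is not literally satisfied by the approximate curves because of the ``jumps'' $T_x(\cdot)-(\cdot)$ at times in $2^{-n}\N$; but those horizontal displacements are exactly what \eqref{E_est_char1} controls in $L^1(\omega_n)$, so one should instead use a set $K_\e$ defined by a uniform bound on $\sup_t|\gamma_x(t)-\gamma_x(t^-_\gamma)-\int_{t^-_\gamma}^t ie^{i\gamma_a(s)}\,ds|$ together with a total variation bound on $\gamma_a$ — both quantities are $\omega_n$-integrable with $n$-uniform bounds by Lemma \ref{L_hor_ver}.

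Given that, tightness follows from Chebyshev: set
\begin{equation*}
K_\e=\Bigl\{\gamma:\ \sup_{t}\Bigl|\gamma_x(t)-\gamma_x(t^-_\gamma)-\!\int_{t^-_\gamma}^t\!\! ie^{i\gamma_a(s)}ds\Bigr|\le \tfrac{2e_h^\infty}{\e}\cdot\tfrac1{\text{(norm.)}},\ \TV_{(t^-_\gamma,t^+_\gamma)}\gamma_a\le \tfrac{2(\nu(B_R)+1)}{\e}\cdot\omega_n(\tilde\Gamma)\Bigr\},
\end{equation*}
where $e_h^\infty:=\sup_n e_h(n)<\infty$ by \eqref{E_est_char1} and the TV-integral is bounded by $\nu(B_R)+o(1)$ via \eqref{E_est_vert}; then $\omega_n(\tilde\Gamma\setminus K_\e)<\e$ for all $n$ by Markov's inequality applied to each of the two functionals, using the uniform mass bound $\sup_n\omega_n(\tilde\Gamma)<\infty$ established above. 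Closedness of this $K_\e$ under $\tau$ requires that both functionals be $\tau$-lower semicontinuous: the horizontal defect is continuous under uniform convergence of $\gamma_x$ and $L^1$-convergence of $\gamma_a$ (the integrand $ie^{i\gamma_a}$ passes to the limit along a subsequence converging a.e.), while $\TV\,\gamma_a$ is $L^1$-lower semicontinuous. \textbf{Main obstacle.} The delicate point is the interplay between the varying time-intervals $(t^-_\gamma,t^+_\gamma)$ and the $\tau$-topology: one must fix a consistent convention for extending each $\gamma$ to all of $(0,1)$ (e.g. constant extension by the endpoint values) so that uniform/$L^1$ convergence of the extensions is meaningful, and then verify that compactness of the extended families together with convergence $t^\pm_{\gamma_n}\to t^\pm_\gamma$ really does produce a limit triple in $\tilde\Gamma$ with the correct defect and variation bounds — in particular that no variation of $\gamma_a$ is ``lost at the endpoints'' in the limit, which is where the lower semicontinuity on the \emph{open} interval $(t^-_\gamma,t^+_\gamma)$, rather than the closed one, must be used carefully.
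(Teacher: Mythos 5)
Your boundedness argument matches the paper's (both bound $|E_3\cap H_\phi|\le M\,\H^1(\partial B_R)\,2^{-n}$ and multiply by $2^n$), and the idea of deriving tightness from Lemma~\ref{L_hor_ver} via Chebyshev is also the paper's. However, there is a genuine gap in the compactness part: the set
\[
K_\e=\Bigl\{\gamma:\ \sup_{t}\Bigl|\gamma_x(t)-\gamma_x(t^-_\gamma)-\int_{t^-_\gamma}^t ie^{i\gamma_a(s)}ds\Bigr|\le \Lambda_\e,\ \TV_{(t^-_\gamma,t^+_\gamma)}\gamma_a\le \Lambda'_\e\Bigr\}
\]
with \emph{fixed} thresholds $\Lambda_\e,\Lambda'_\e$ is \emph{not} $\tau$-compact. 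A uniform bound on the horizontal defect only gives, for $t<t'$,
\[
|\gamma_x(t')-\gamma_x(t)|\le 2\Lambda_\e+|t'-t|,
\]
which is not an equicontinuity modulus: it allows a jump of fixed size $\Lambda_\e$ whose location varies along the sequence. Such a sequence of $\gamma_x$'s has no uniformly convergent subsequence, so Ascoli--Arzel\`a fails and $K_\e$ is not relatively $\tau$-compact. Using $e_h^\infty:=\sup_n e_h(n)$ rather than $e_h(n)$ precisely discards the decay $e_h(n)\to 0$, which is the one piece of information that rescues compactness. (A secondary issue: your $K_\e$ as written involves $\omega_n(\tilde\Gamma)$ and therefore depends on $n$; one must replace it by $\sup_n\omega_n(\tilde\Gamma)$, which is finite by the boundedness step.)

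The paper circumvents this by introducing $n$-dependent classes $\tilde\Gamma_{n,C}$ in which (i) $\TV\,\gamma_a\le C$, (ii) the total horizontal jump $\sum_k|\gamma_x(2^{-n}k)-\gamma_x(2^{-n}k-)|$ is bounded by $C\,e_h(n)^{1/2}$, a quantity that \emph{decays to zero} as $n\to\infty$, and (iii) $\gamma_x$ is $1$-Lipschitz on each dyadic subinterval $[(k-1)2^{-n},k2^{-n})$. Then $\tilde\Gamma(C)=\bigcup_n\tilde\Gamma_{n,C}$ is $\tau$-compact: any sequence either lives in finitely many $\tilde\Gamma_{n,C}$ (handled by Helly/Ascoli on a fixed dyadic grid) or has $n_k\to\infty$, in which case the horizontal jumps vanish in the limit and the limiting $\gamma_x$ is honestly $1$-Lipschitz. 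Chebyshev, applied with $e_h(n)^{-1/2}\sum_k|\gamma_x(2^{-n}k)-\gamma_x(2^{-n}k-)|$ and $\TV\,\gamma_a$ as the two controlled functionals, then yields $\omega_n(\tilde\Gamma\setminus\tilde\Gamma_{n,C})\le\e$ uniformly in $n$. To repair your proof you must replace the fixed defect threshold by a Cauchy--Schwarz-type normalization $e_h(n)^{1/2}$ that vanishes with $n$, exactly as the paper does.
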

\begin{proof}
We prove first that the sequence $\omega_n$ is bounded: for every $n$ it holds
\begin{equation*}
|E_{3,n}\cap H_\phi| \le |E_{3,n}| \le M \H^1(\partial B_R) 2^{-n}.
\end{equation*}
In particular
\begin{equation*}
\limsup_{n\to \infty}|\omega_n|(\tilde \Gamma) = \limsup_{n\to \infty} \L^3(H_\phi)+2^n|E_{3,n}\cap H_\phi| \le  \L^3(H_\phi) + M \H^1(\partial B_R).
\end{equation*}

%The same computations as in the proof of Lemma \ref{L_chi12} imply that 
%\begin{equation}
%|E_{3,n}\cap H_\phi| \sim 2^{-n}\int_0^M\int_{\partial B_R} ie^{ia}\cdot \bar n \chi d \H^1 da \qquad \mbox{as }n\to \infty
%\end{equation}
%In particular 
%\begin{equation}
%\lim_{n\to \infty}|\omega_n|(\tilde \Gamma) = \lim_{n\to \infty} \L^3(H_\phi)+2^n|E_{3,n}\cap H_\phi| =  \L^3(H_\phi) + \int_0^M\int_{\partial B_R} ie^{ia}\cdot \bar n \chi d \H^1 da.
%\end{equation}

In order to prove the tightness of the sequence $\omega_n$ we consider for every $n\in \N$ and $C>0$ the set of curves $(\gamma,t^-_\gamma,t^+_\gamma)\in \tilde \Gamma_{n,C}\subset \tilde \Gamma$ satisfying the following properties:
\begin{enumerate}
\item $\TV_{(t^-_\gamma,t^+_\gamma)}\gamma_a \le C$;
\item $\sum_{k=l^-(\gamma)}^{l^+(\gamma)} |\gamma_x(2^{-n}k)-\gamma_x(2^{-n}k-)| \le C e_h(n)^{1/2}$, where $e_h(n)$ is defined in Lemma \ref{L_hor_ver}, $l^-(\gamma):= \inf 2^{-n}\Z \cap (t^-_\gamma,t^+_\gamma)$ and $l^+(\gamma):= \sup 2^{-n}\Z \cap (t^-_\gamma,t^+_\gamma)$;
\item $\Lip \gamma\llcorner ([(k-1)2^{-n},k2^{-n)})\le 1$ for every $k = l^-(\gamma),\ldots, l^+(\gamma)$.
\end{enumerate}
Since $e_h(n)$ tends to 0 as $n\to \infty$, for every $C>0$ the space 
\begin{equation*}
\tilde \Gamma(C):= \bigcup_{n=1}^\infty \tilde \Gamma_{n,C}
\end{equation*}
is compact with respect to the topology $\tau$ introduced above.
Moreover it follows by Lemma \ref{L_hor_ver} and Chebychev inequality that for every $\e>0$ there exists $C>0$ sufficiently large such that for every $n \in \N$
\begin{equation*}
\omega_n(\tilde \Gamma \setminus \tilde \Gamma(C))\le \e. \qedhere
\end{equation*}
 
%
%
%
%\begin{equation}
%K(M):= \{(\gamma, t^-_\gamma, t^+_\gamma)\in \tilde \Gamma : \TV_{(t^-_\gamma,t^+_\gamma)} \gamma \le M\}.
%\end{equation}
%We estimate
%\begin{equation}\label{E_point_est}
% \TV_{(t^-_\gamma,t^+_\gamma)} \gamma_x \le |t^+_\gamma-t^-_\gamma| + \sum_{l^-(\gamma)}^{l^+(\gamma)}|\gamma_x(l2^{-n})-\gamma_x(l2^{-n}-)|
%\end{equation}
%Integrating \eqref{E_point_est} with respect to $\omega_n$, it follows as in the proof of Lemma \ref{L_hor_ver} that
%\begin{equation}\label{E_TV_hor}
%\int_{\tilde \Gamma} \TV_{(t^-_\gamma,t^+_\gamma)} \gamma_x d\omega_n \le \omega_n(\tilde \Gamma) + e_h(n).
%\end{equation}
%In particular the l.h.s. is uniformly bounded as $n \to \infty$. Therefore by Chebyshev's inequality, 
%it follows from \eqref{E_est_vert} and \eqref{E_TV_hor} that for every $\e>0$ there exists $M$ sufficiently large such that 
%\begin{equation}
%\omega_n(\tilde \Gamma \setminus K(M)) \le \e.
%\end{equation}
\end{proof}

By Theorem \ref{T_Prok} it follows that the sequence $\omega_n$ is precompact with respect to the narrow convergence. We show in the next lemma that every limit point of $\omega_n$ is a Lagrangian representation of the hypograph of $\phi$ on $B_R$.

\begin{lemma}
Every limit point $\omega$ of the sequence $\omega_n$ is a Lagrangian representation of the hypograph of $\phi$ on $B_R$. 
\end{lemma}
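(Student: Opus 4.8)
The plan is to verify the three conditions in Definition \ref{D_Lagr} for the limit measure $\omega$, using the estimates established in Lemmas \ref{L_hor_ver} and \ref{L_cumulative} together with the lower semicontinuity properties of the relevant functionals under the topology $\tau$. First I would fix a limit point $\omega$ and (passing to a subsequence) assume $\omega_n \to \omega$ narrowly in $\M(\tilde\Gamma)$; I must also check that $\omega$ is concentrated on $\Gamma \subset \tilde\Gamma$, i.e. that $\omega$-a.e.\ curve has Lipschitz horizontal component. This follows because the functional $\gamma \mapsto \mathrm{Lip}(\gamma_x)$ (suitably interpreted, e.g.\ via the pointwise bound on difference quotients) is $\tau$-lower semicontinuous, and on each $\tilde\Gamma_{n,C}$ the horizontal components are $1$-Lipschitz off the grid $2^{-n}\N$ with total jump controlled by $C e_h(n)^{1/2} \to 0$; hence in the limit $\gamma_x$ is genuinely $1$-Lipschitz.

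Next I would establish Condition (2), the characteristic equation \eqref{E_characteristic}. The quantity inside the integral in \eqref{E_est_char1} is $\tau$-lower semicontinuous in $\gamma$: if $\tilde\gamma_{n,x} \to \tilde\gamma_x$ uniformly and $\tilde\gamma_{n,a} \to \tilde\gamma_a$ in $L^1$, then $\int i e^{i\gamma_{n,a}(s)}ds \to \int i e^{i\gamma_a(s)}ds$ uniformly in the endpoint, so the $\sup$-norm defect passes to the liminf. By Lemma \ref{L_hor_ver}, $e_h(n) = o(1)$, so Fatou's lemma (applied along the narrowly convergent $\omega_n$, using lower semicontinuity of the integrand) gives
\begin{equation*}
\int_{\tilde\Gamma} \sup_{t}\left| \gamma_x(t) - \gamma_x(t^-_\gamma) - \int_{t^-_\gamma}^t i e^{i\gamma_a(s)}ds \right| d\omega(\gamma) \le \liminf_{n\to\infty} e_h(n) = 0,
\end{equation*}
whence $\omega$-a.e.\ $\gamma$ satisfies $\gamma_x(t) = \gamma_x(t^-_\gamma) + \int_{t^-_\gamma}^t i e^{i\gamma_a(s)}ds$, which together with the Lipschitz regularity of $\gamma_x$ yields \eqref{E_characteristic} for a.e.\ $t$. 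Condition (3), the bound \eqref{E_reg}, follows analogously: $\gamma \mapsto \TV_{(t^-_\gamma,t^+_\gamma)}\gamma_a$ is $\tau$-lower semicontinuous (total variation is l.s.c.\ under $L^1$ convergence, and one handles the moving endpoints using $t^\pm_{\gamma_n} \to t^\pm_\gamma$), so $\int \TV \gamma_a \, d\omega \le \liminf_n e_v(n) \le \nu(B_R) < \infty$ by \eqref{E_est_vert}.

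The main obstacle is Condition (1), the representation formula \eqref{E_repr_formula}: one needs $(e_t)_\sharp(\omega\llcorner\Gamma(t)) = \mathscr L^3\llcorner H_\phi$ for \emph{every} $t \in (0,1)$, whereas Lemma \ref{L_cumulative} only controls $(e_{l2^{-n}})_\sharp\omega_n$ on the dyadic grid. The strategy is: (a) fix $t$ and $n$ large, let $l = l(n)$ be the largest integer with $l2^{-n} \le t$; (b) show the evaluation map $e_t$ is $\tau$-continuous on the relevant sets so that $(e_t)_\sharp(\omega_n\llcorner\Gamma(t)) \to (e_t)_\sharp(\omega\llcorner\Gamma(t))$ weakly — here care is needed because $\Gamma(t)$ is not closed, but the mass of curves with $t^\pm_\gamma$ near $t$ is negligible thanks to the tightness estimate and the absence of concentration; (c) combine \eqref{E_est_l}, which gives $\|(e_{l2^{-n}})_\sharp\omega_n - \chi\L^3\| \le 2^{-n+1}l\,\e_{2^{-n}} \le 2\e_{2^{-n}} \to 0$, with the weak-continuity-in-time estimate \eqref{E_weak_cont}, which gives $|\int\psi\, d(e_t)_\sharp\omega_n - \int\psi\, d(e_{l2^{-n}})_\sharp\omega_n| \le C\psi\, 2^{-n} \to 0$; (d) conclude that $(e_t)_\sharp\omega_n \to \chi\L^3 = \mathscr L^3\llcorner H_\phi$ weakly, and by uniqueness of weak limits identify this with $(e_t)_\sharp(\omega\llcorner\Gamma(t))$. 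The delicate point throughout is matching the pushforward under $e_t$ of the restricted measure $\omega\llcorner\Gamma(t)$ with the limit of the approximations — one must argue that curves exiting or entering $B_R$ near time $t$ carry vanishing mass in the limit, which rests on the uniform mass bounds for the $E_{3,n}$-contributions ($2^n|E_{3,n}| \le M\H^1(\partial B_R)$) and the fact that $\mathscr L^3\llcorner H_\phi$ charges no null sets.
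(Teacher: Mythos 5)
Your treatment of Conditions (2) and (3) is essentially the paper's: lower semicontinuity of the defect functional $\gamma\mapsto\sup_t|\gamma_x(t)-\gamma_x(t^-_\gamma)-\int i e^{i\gamma_a}|$ and of $\gamma\mapsto\TV\gamma_a$ under $\tau$, combined with \eqref{E_est_char1} and \eqref{E_est_vert} and narrow convergence, gives exactly what is claimed; that is also how the paper handles them.

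The gap is in Condition (1), at step (b). You assert that ``the evaluation map $e_t$ is $\tau$-continuous on the relevant sets,'' attributing the delicacy to $\Gamma(t)$ not being closed and to curves exiting/entering near time $t$. But those are not the main obstruction. The topology $\tau$ controls the vertical component $\gamma_a$ only in $L^1(0,1)$, and $L^1$-convergence gives no control whatsoever of the value $\gamma_{n,a}(t)$ at a \emph{fixed} time $t$. Thus $\gamma\mapsto\psi(\gamma(t))=\psi(\gamma_x(t),\gamma_a(t))$ is genuinely not $\tau$-continuous for any fixed $t$, even away from endpoints, and you cannot pass $(e_t)_\sharp\omega_n\to(e_t)_\sharp\omega$ directly; the paper explicitly flags this (``$e_t$ is not continuous on $\tilde\Gamma$''). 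The paper's remedy, which your proposal is missing, is to replace the pointwise evaluation by the time-averaged functional
\begin{equation*}
T_{\psi,I}(\gamma,t^-_\gamma,t^+_\gamma):=\int_{I\cap(t^-_\gamma,t^+_\gamma)}\psi(\gamma(t))\,dt,
\end{equation*}
which \emph{is} bounded and $\tau$-continuous (uniform convergence in $x$ plus $L^1$-convergence in $a$ against an integral in $t$). Applying narrow convergence to $T_{\psi,I}$ and Fubini yields $(e_t)_\sharp\omega_n\to(e_t)_\sharp\omega$ for $\L^1$-a.e.\ $t$, and one then upgrades to \emph{every} $t$ by a separate argument that $t\mapsto(e_t)_\sharp\omega$ is continuous in the sense of distributions (using that $\omega$-a.e.\ $\gamma$ has no $a$-jump at any fixed $t$ and that the curves' endpoints lie on $\partial B_R$). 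Without this time-averaging step your chain (b)--(d) does not close: uniqueness of weak limits cannot be invoked before you have established that $(e_t)_\sharp(\omega_n\llcorner\Gamma(t))$ actually converges to $(e_t)_\sharp(\omega\llcorner\Gamma(t))$, which is precisely the unproved claim.
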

\begin{proof}
We need to check that the three conditions in Definition \ref{D_Lagr} are satisfied and that $\omega \in \M_+(\Gamma)$, namely that $\omega$ is concentrated on $\Gamma$.
\newline
\emph{Condition (1)}. We prove that for every $t\in (0,1)$ the following two limits hold in the 
sense of distributions
\begin{equation}\label{E_two_lim}
\lim_{n\to \infty}(e_t)_\sharp \omega_n = \L^3\llcorner H_\phi, \qquad \mbox{and} \qquad \lim_{n\to \infty}(e_t)_\sharp \omega_n  = (e_t)_\sharp \omega.
\end{equation}
For every $t=2^{-k}\N \cap (0,1)$ for some $k \in \N$ the first limit holds true thanks to Lemma \ref{L_cumulative}, since 
$\chi \L^3 = \L^3\llcorner H_\phi$ by definition of $\chi$. 
The continuity in time stated in \eqref{E_weak_cont} implies that the limit holds true therefore for every $t \in (0,1)$ in the sense of distributions.
We observe that the second limit in \eqref{E_two_lim} is not trivial since $ e_t$ is not continuous on $\tilde \Gamma$ with respect to the topology introduced above. In order to establish it we need to check that for every $\psi \in C^\infty_c(B_R\times [0,M])$ it holds
\begin{equation*}
\lim_{n\to \infty} \int_{\tilde \Gamma(t)} \psi(\gamma(t))d\omega_n = \int_{\tilde \Gamma(t)} \psi(\gamma(t))d\omega.
\end{equation*}
Let $I\subset (0,1)$ be a non-empty open interval. 
Then consider the continuous and bounded function $T_{\psi,I}:\tilde \Gamma \to \R$ defined by
\begin{equation*}
T_{\psi,I}(\gamma,t^-_\gamma,t^+_\gamma) := \int_{I\cap (t^-_\gamma,t^+_\gamma)}\psi(\gamma(t))dt.
\end{equation*}
By definition of narrow convergence and Fubini theorem it follows that
\begin{equation*}
\lim_{n\to \infty}  \int_I \int_{\tilde \Gamma(t)}\psi (\gamma(t)) d\omega_n dt = \lim_{n\to \infty} \int_{\tilde \Gamma} T_{\psi, I} d\omega_n
= \int_{\tilde \Gamma} T_{\psi, I} d\omega =  \int_I \int_{\tilde \Gamma(t)}\psi (\gamma(t)) d\omega dt.
\end{equation*}
This proves that the second limit in \eqref{E_two_lim} holds for $\L^1$-a.e. $t \in (0,1)$. 
In order to prove that the limit is valid for every $t \in (0,1)$, we observe that $\omega$ is concentrated on curves with endpoints in $\partial B_R$ and for every $t\in (0,1)$ it holds
\begin{equation*}
\omega \left(\{(\gamma,t^-_\gamma,t^+_\gamma) \in \tilde \Gamma: t \in (t^-_\gamma,t^+_\gamma) \mbox{ and }\gamma_a(t-)\ne \gamma_a(t+)\}\right) = 0.
\end{equation*}
In particular $t \mapsto (e_t)_\sharp \omega$ is continuous in the sense of distributions on $B_R\times [0,M]$ and therefore the second limit in  \eqref{E_two_lim} holds for every $t\in (0,1)$.
\newline
\emph{Condition (2)}. 
The function $g:\tilde \Gamma\to \R$ defined by
\begin{equation*}
g(\gamma,t^-_\gamma,t^+_\gamma):=  \sup_{t\in (t^-_\gamma,t^+_\gamma)}\left|\gamma_x(t)-\gamma_x(t^-_\gamma) - \int_{t^-_\gamma}^t ie^{i\gamma_a(s)}ds\right|
\end{equation*}
is lower semicontinuous, therefore
\begin{equation*}
\int_{\tilde\Gamma}g(\gamma) d \omega \le \lim_{n\to \infty}\int_{\tilde\Gamma}g(\gamma) d \omega_n, 
\end{equation*}
which is equal to 0 by \eqref{E_est_char1}. 
\newline
\emph{Condition (3)} follows similarly from \eqref{E_est_vert}. In particular $\omega$ is concentrated on $\Gamma$ and this concludes the proof.
\end{proof}

%\begin{equation}
%e_I (\gamma,t^-_\gamma,t^+_\gamma):= \frac{1}{|I|}\left( \int_I \gamma(t)\mathbf{1}_{(t^-_\gamma,t^+_\gamma)}(t) dt 
%+(\bar x,0) |I\setminus (t^-_\gamma,t^+_\gamma)| \right)
%\end{equation}

\subsection{Representation of the defect measure and good curves selection}
In the following proposition we show that the kinetic measure $U_\phi$ can be decomposed along the characteristic trajectories detected by the Lagrangian representation $\omega_h$. 

\begin{proposition}
Let $\omega_h$ be a Lagrangian representation of the hypograph of $\phi$ on $B_R$ obtained as limit point of $\omega_n$ as in the previous section. Then 
\begin{equation*}
\L^1\times U_\phi = \int_{ \Gamma} \mu_\gamma d\omega_h(\gamma), \qquad \mbox{and} \qquad\L^1\times  |U_\phi| = \int_{ \Gamma} |\mu_\gamma| d\omega_h(\gamma).
\end{equation*}
\end{proposition}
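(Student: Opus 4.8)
The plan is to pass to the limit in the identities satisfied by the approximate measures $\omega_n$, exploiting the finer structure of the building block provided by Corollary~\ref{C_T} and the kinetic equation \eqref{E_kinetic}. The starting point is that for each $n$ the curves in the support of $\omega_n$ are, by construction, piecewise characteristic: on each dyadic interval $((l-1)2^{-n},l2^{-n})$ the $x$-component solves exactly $\dot\gamma_x = ie^{i\gamma_a}$ with $\gamma_a$ constant, and at the dyadic times $l2^{-n}$ a jump occurs according to the transport map $T$ of Corollary~\ref{C_T}. Accordingly $\mu_{\gamma^{k,n}}$ reduces to a sum of atoms supported at the dyadic times, and summing over the family yields an explicit approximate identity for $\int_{\tilde\Gamma}\mu_\gamma\,d\omega_n$ in terms of the maps $T$ at each step. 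I would first write this out and identify the error: it is controlled, via Corollary~\ref{C_T}, by $\sum_{l}\bigl((\bar t + \bar t^{3/2})\nu(B_R) + \e_{\bar t}^{1/2}\bar t(2R+\e_{\bar t}^{1/2}M)\bigr)$ with $\bar t = 2^{-n}$, together with the mass loss $\e_{2^{-n}}$ of $\chi^i(\bar t)$ to $\rho^i_{\bar t}$, all of which vanishes as $n\to\infty$ by Lemma~\ref{L_chi12}.

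Next I would identify what the main term converges to. Here the key is the kinetic formulation \eqref{E_kinetic}: the building-block displacement map $T$ is $L^1$-optimal between $\rho^1_{\bar t}\L^3$ (essentially $\chi\L^3$) and $\rho^2_{\bar t}\L^3$ (essentially the push-forward of $\chi\L^3$ along the free flow), and by Proposition~\ref{P_L1_estimate} the first-order (in $\bar t$) contribution of the transport in the $a$-variable is exactly $-\partial_a(\L^1\times U_\phi)$, while the transport in $x$ contributes at order $\bar t^2$ and is negligible after summation. Thus $\int_{\tilde\Gamma}\mu_\gamma\,d\omega_n$ converges, in $\M((0,1)\times B_R\times[0,M])$, to $\L^1\times U_\phi$. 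To make the passage to the limit rigorous I would use that $\gamma\mapsto\mu_\gamma$, or rather $\gamma\mapsto\langle\mu_\gamma,\Phi\rangle$ for $\Phi\in C^1_c$, is related through integration by parts to quantities that are continuous (or lower semicontinuous) under the topology $\tau$; the total-variation bound \eqref{E_est_vert} gives the uniform integrability needed to commute the limit with the integral over $\Gamma$, and the lower semicontinuity of $\gamma\mapsto\TV\gamma_a$ upgrades this to the statement for $|\mu_\gamma|$.

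For the second identity, $\L^1\times|U_\phi| = \int_\Gamma|\mu_\gamma|\,d\omega_h$, I would argue by a soft two-sided inequality. The inequality $\int_\Gamma|\mu_\gamma|\,d\omega_h \le \L^1\times|U_\phi|$ follows from \eqref{E_est_vert} together with lower semicontinuity: $\int_\Gamma\TV\gamma_a\,d\omega_h \le \liminf_n e_v(n) \le \nu(B_R)$, and since $|\mu_\gamma|$ projects (in $x$) to a measure dominated by $\TV\gamma_a$-type quantities one gets the bound $\int_\Gamma|\mu_\gamma|\,d\omega_h \le \L^1\times\nu = \L^1\times|U_\phi|$ after checking that the horizontal and jump pieces $\H^1\llcorner E_\gamma^\pm$ are accounted for correctly. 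The reverse inequality is automatic from the first identity, since $|\int_\Gamma\mu_\gamma\,d\omega_h| \le \int_\Gamma|\mu_\gamma|\,d\omega_h$ forces $\L^1\times|U_\phi| \le \int_\Gamma|\mu_\gamma|\,d\omega_h$. Combining the two gives equality, and as a by-product the positive/negative parts are represented separately, since equality in the triangle inequality for measures means no cancellation occurs $\omega_h$-a.e.

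The main obstacle I anticipate is the passage to the limit in $\int_{\tilde\Gamma}\mu_\gamma\,d\omega_n$: the map $e_t$ is discontinuous in $\tau$ (as already noted in the existence proof), and $\mu_\gamma$ involves the diffuse part $\tilde D_t\gamma_a$ as well as the jump contributions $\H^1\llcorner E_\gamma^\pm$, neither of which is stable under mere $L^1$-convergence of $\gamma_a$. The device to get around this is the same as for Condition (1) in the existence proof: test against $\Phi\in C^1_c$, integrate also in an auxiliary time variable over a small interval $I$, so that the relevant functional $\gamma\mapsto\int_I\langle\mu_\gamma(t,\cdot),\Phi\rangle\,dt$ becomes continuous and bounded on $\tilde\Gamma$, pass to the limit there, and then use the continuity in time of $t\mapsto(e_t)_\sharp\omega_h$ (established via the absence of jumps of $\gamma_a$ at any fixed $t$, $\omega_h$-a.e.) to remove the time averaging. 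Handling the jump part $\H^1\llcorner E_\gamma^\pm$ correctly in this scheme — matching it against the atomic part of $U_\phi$ — is the delicate bookkeeping step.
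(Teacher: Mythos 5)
Your proposal takes a genuinely different route from the paper, and the chosen route has a gap at its central step.

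\textbf{What the paper does.} The paper never passes to the limit in $\int_{\tilde\Gamma}\mu_\gamma\,d\omega_n$. Instead it works directly with the limit $\omega_h$, using only (i) the kinetic equation \eqref{E_kinetic}, (ii) the representation formula (Condition~(1) of Definition~\ref{D_Lagr}), (iii) the characteristic ODE (Condition~(2)), and (iv) a BV chain rule for $\psi\circ\gamma$. Testing against $\varphi(t)\partial_a\psi(x,a)$ and using \eqref{E_kinetic} turns the left-hand side into $\int_{(0,1)\times H_\phi}ie^{ia}\cdot\nabla_x\psi\,\varphi$; by the representation formula this becomes an integral over $\Gamma$, the characteristic ODE replaces $ie^{i\gamma_a}$ by $\dot\gamma_x$, and the BV chain rule for $\psi\circ\gamma$ (together with the structural facts that $D_t\gamma_x$ is purely absolutely continuous for $\omega_h$-a.e.\ $\gamma$, and that curves with $t^\pm_\gamma\in(0,1)$ exit at $\partial B_R$ so boundary terms vanish) produces exactly $-\int_\Gamma\int\varphi\,\partial_a\psi\,d\mu_\gamma\,d\omega_h$. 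For the second identity the paper argues precisely by a mass comparison: $\int_\Gamma|\mu_\gamma|\,d\omega_h\ge\L^1\times|U_\phi|$ as measures (from the first identity), and $\int_\Gamma|\mu_\gamma|((0,1)\times B_R\times[0,M])\,d\omega_h=\int_\Gamma\TV\gamma_a\,d\omega_h\le(\L^1\times|U_\phi|)((0,1)\times B_R\times[0,M])$ by lower semicontinuity of the total variation and \eqref{E_est_vert}.

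\textbf{Where your route has a gap.} The crux of your plan for the first identity is the assertion that, to first order in $\bar t$, the $a$-displacement of the optimal map $T$ of Corollary~\ref{C_T} \emph{equals} $-\partial_a(\L^1\times U_\phi)$, so that the dyadic sums $\int_{\tilde\Gamma}\mu_\gamma\,d\omega_n$ converge to $\L^1\times U_\phi$. This is not justified by Proposition~\ref{P_L1_estimate}, which is only a one-sided $W_1$ bound; it constrains the total transport cost but does not identify the signed measure carried by the $a$-component of $T$. An $L^1$-optimal map is far from uniquely determined, and nothing in Corollary~\ref{C_T} says that $T$ realizes the kinetic defect $\chi^1(\bar t)-\chi^2(\bar t)$ without extra cancellation. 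You also correctly flag that passing to the limit in $\mu_\gamma$ under $\tau$ is delicate because $\tilde D_t\gamma_a$ and $\H^1\llcorner E_\gamma^\pm$ are not stable under $L^1$ convergence of $\gamma_a$; the time-averaging trick removes the discontinuity of $e_t$ but does not by itself resolve the instability of the Lebesgue decomposition of $D_t\gamma_a$. The paper sidesteps both problems entirely by never going back to the approximation for the first identity.

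\textbf{Minor point on the second identity.} Your intent is right, but the inequality you write, $\int_\Gamma|\mu_\gamma|\,d\omega_h\le\L^1\times\nu=\L^1\times|U_\phi|$, does not make sense as a measure inequality ($\nu$ lives on $B_R$, not on $B_R\times[0,M]$, and no such domination holds pointwise). The correct statement, which is the one the paper uses, is a comparison of \emph{total masses}: together with the measure inequality $\int_\Gamma|\mu_\gamma|\,d\omega_h\ge\L^1\times|U_\phi|$ from the first identity, equality of total masses forces equality as measures.
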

\begin{proof}
Let $\bar \psi (t,x,a) =\varphi(t) \psi(x,a)\in C^\infty_c((0,1)\times B_R\times[0,M])$. Then
\begin{equation}\label{E_dec1}
\begin{split}
-\int_{(0,1)\times X} \varphi\partial_a\psi d U_\phi dt = &~ \int_{(0,1)\times H_\phi}ie^{ia}\cdot \nabla_x \psi \varphi dx da dt \\
= &~ \int_0^1 \int_{ \Gamma(t)}ie^{i\gamma_a(t)}\cdot \nabla_x \psi (\gamma(t))\varphi(t)d\omega_h(\gamma) dt \\
= &~  \int_0^1 \int_{ \Gamma(t)} \dot{\gamma_x}(t) \cdot \nabla_x \psi (\gamma(t))d\omega_h(\gamma) \varphi(t)dt \\
= &~ \int_{ \Gamma}\int_{t^-_\gamma}^{t^+_\gamma}   \dot{\gamma_x}(t)\cdot  \nabla_x \psi (\gamma(t)) \varphi(t) dt d\omega_h(\gamma).
\end{split}
\end{equation}
For every $\gamma \in \Gamma$, we consider the map $\psi_\gamma:= \psi \circ \gamma: (t^-_\gamma,t^+_\gamma)  \to B_R\times [0,M]$.
%\begin{equation}
%\begin{split}
%\psi_\gamma: (t^-_\gamma,t^+_\gamma) & \to B_R\times [0,M] \\ 
%t & \mapsto \psi(\gamma(t)).
%\end{split}
%\end{equation}
Since $\omega_h$-a.e. $\gamma \in  \Gamma$ has bounded variation on its domain, also $\psi_\gamma \in \BV((t^-_\gamma,t^+_\gamma);\R)$ and we have the following chain rule:
\begin{equation}\label{E_chain_rule}
\begin{split}
D_t \psi_\gamma =  &~ \nabla \psi (\gamma(t))\cdot \tilde D_t \gamma + \sum_{t_j\in J_\gamma}(\psi(\gamma(t_j+))-\psi(\gamma(t_j-)))\delta_{t_j} \\
= &~ \nabla_x \psi (\gamma(t))\cdot \tilde D_t \gamma_x + \partial_a \psi(\gamma(t))\tilde D_t\gamma_a + \sum_{t_j\in J_\gamma}(\psi(\gamma(t_j+))-\psi(\gamma(t_j-)))\delta_{t_j}.
\end{split} 
\end{equation}
Since for $\omega$-a.e. $\gamma$ it holds $D_t\gamma_x = \dot \gamma_x(t)\L^1$, plugging \eqref{E_chain_rule} into \eqref{E_dec1}, we obtain
\begin{equation*}\label{E_dec2}
\begin{split}
-\int_{(0,1)\times X}\varphi \partial_a\psi d U_\phi dt = &~  \int_{ \Gamma}\left(\int_{(t^-_\gamma, t^+_\gamma)} \varphi\left(D_t\psi_\gamma - 
\partial_a \psi(\gamma(t)) \tilde D_t\gamma_a - \sum_{t_j \in J_\gamma}  (\psi_\gamma(t_j+)-\psi_\gamma(t_j-))\delta_{t_j}\right)\right)d\omega_h \\
= &~ \int_{ \Gamma}\left(\int_{(t^-_\gamma, t^+_\gamma)} \varphi\left(D_t\psi_\gamma - 
\partial_a \psi(\gamma(t))\tilde D_t\gamma_a\right) - \sum_{t_j \in J_\gamma} \varphi(t_j) (\psi_\gamma(t_j+)-\psi_\gamma(t_j-))\right)d\omega_h.
\end{split}
\end{equation*}
We observe that by construction if $t^-_\gamma>0$, then $\gamma(t^-_\gamma) \in \partial B_R \times [0,M]$ and therefore $\psi(\gamma(t^-_\gamma))=0$. Similarly, if $t^+_\gamma<1$ then $\psi(\gamma(t^+_\gamma))=0$.
Therefore 
\begin{equation*}
\begin{split}
\int_{\Gamma}\int_{(t^-_\gamma,t^+_\gamma)}\varphi(t)D_t\psi_\gamma d\omega_h(\gamma) = &~ -\int_{\Gamma}\int_{(t^-_\gamma,t^+_\gamma)} \varphi'(t)\psi(\gamma(t))dt d\omega_h(\gamma)\\
= &~ \int_0^1\int_{H_\phi} \varphi'(t)\psi(x,a)dxdadt  \\
=&~ 0.
\end{split}
\end{equation*}
Since for $\omega_h$-a.e. $\gamma$ and every $t_j \in J_\gamma$ it holds $\gamma_x(t_j+)=\gamma_x(t_j-)$, it follows from the definition of $\mu_\gamma$ that
\begin{equation*}
\begin{split}
-\int_{(0,1)\times X}\varphi \partial_a\psi d U_\phi dt= &~ \int_{ \Gamma}\left(\int_{(t^-_\gamma, t^+_\gamma)} \left( - \varphi(t)
\partial_a \psi(\gamma(t))\tilde D_t\gamma_a\right) - \sum_{t_j \in J_\gamma} \varphi(t_j) (\psi(\gamma(t_j+))-\psi(\gamma(t_j-)))\right)d\omega_h \\
=& ~ - \int_{ \Gamma}\int_{(0,1)\times X} \varphi\partial_a \psi d \mu_\gamma d \omega_h(\gamma).
\end{split}
\end{equation*}
This proves the first equality in the statement when tested with functions of the form $\varphi\partial_a\psi$ for two test functions $\varphi,\psi$. Since both $U_\phi$ and $\int\mu_\gamma d\omega_h$ are supported on $[0,1]\times B_R \times [0,M]$ the equality holds true for every test function.

The inequality
\begin{equation*}
\L^1\times |U_\phi| \le \int_{ \Gamma}|\mu_\gamma| d\omega_h
\end{equation*}
follows immediately from the already proved first equality in the statement. In order to prove the opposite inequality it is enough to 
prove the global inequality
\begin{equation*}
\left(\L^1\times |U_\phi|\right)((0,1)\times B_R\times [0,M])\ge \int_{ \Gamma}|\mu_\gamma|((0,1) \times B_R\times [0,M])d\omega_h.
\end{equation*}
We observe that $|\mu_\gamma|((0,1) \times B_R\times [0,M])= \TV_{(t^-_\gamma, t^+_\gamma)} \gamma_a$ and that the map
\begin{equation*}
(\gamma,t^-_\gamma,t^+_\gamma) \mapsto \TV_{(t^-_\gamma, t^+_\gamma)} \gamma_a
\end{equation*}
is lower semicontinuous on $\tilde \Gamma$. Therefore it follows from \eqref{E_est_vert} that
\begin{equation*}
\begin{split}
\int_{ \Gamma}|\mu_\gamma|(B_R\times [0,M])d\omega_h = &~ \int_{ \Gamma}\TV_{(t^-_\gamma, t^+_\gamma)} \gamma_a d\omega_h \\
\le &~ \liminf_{n\to \infty} \int_{\tilde \Gamma}\TV_{(t^-_\gamma, t^+_\gamma)} \gamma_a d\omega_n \\
\le &~  \left(\L^1\times |U_\phi|\right)((0,1) \times B_R\times [0,M]). \qedhere
\end{split}
\end{equation*}
\end{proof}

%\begin{corollary}
%Let $\omega$ be a Lagrangian representation of the hypograph of $\phi$ on $B_R$ obtained as limit point of $\omega_n$ as in the previous section. Then 
%\begin{equation}
%\L^1\times U^-_\phi = \int_{\tilde \Gamma} \mu^-_\gamma d\omega_h(\gamma), \qquad \mbox{and} \qquad \L^1\times U^+_\phi = \int_{\tilde \Gamma} \mu^+_\gamma d\omega_h(\gamma).
%\end{equation}
%\end{corollary}

With the result above the proof of the part of Theorem \ref{T_Lagrangian} concerning the hypograph of $\phi$ is complete;
the statement for the epigraph of $\phi$ can be proven in the same way.

The following lemma is an application of Tonelli theorem and it is already proven in \cite{M_Burgers} to which we refer for the details.

\begin{lemma}
For $\omega_h$-a.e. $\gamma \in  \Gamma$ it holds that for $\L^1$-a.e. $t\in (t^-_\gamma,t^+_\gamma)$
\begin{enumerate}
\item $\gamma_x(t)$ is a Lebesgue point of $\phi$;
\item $\gamma_a(t)< \phi(\gamma_x(t))$.
\end{enumerate}
We denote by $ \Gamma_h$ the set of curves $\gamma\in  \Gamma$ such that the two properties above hold.
Similarly for $\omega_e$-a.e. $\gamma \in  \Gamma$ it holds that for $\L^1$-a.e. $t\in (t^-_\gamma,t^+_\gamma)$
\begin{enumerate}
\item $\gamma_x(t)$ is a Lebesgue point of $\phi$;
\item $\gamma_a(t)> \phi(\gamma_x(t))$
\end{enumerate}
and we denote the set of these curves by $ \Gamma_e$.
\end{lemma}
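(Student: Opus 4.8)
The statement is that for $\omega_h$-a.e.\ curve $\gamma\in\Gamma$, for $\L^1$-a.e.\ time $t\in(t^-_\gamma,t^+_\gamma)$, the point $\gamma_x(t)$ is a Lebesgue point of $\phi$ and $\gamma_a(t)<\phi(\gamma_x(t))$ (with the symmetric statement for $\omega_e$). The plan is to deduce this from the representation formula \eqref{E_repr_formula}, $(e_t)_\sharp\big[\omega_h\llcorner\Gamma(t)\big]=\L^3\llcorner H_\phi$, by a Fubini/Tonelli argument in the product space $\Gamma\times(0,1)$ equipped with the measure $\omega_h\otimes\L^1$.

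First I would handle property (2). Let $N:=\{(x,a)\in B_R\times[0,M]: a\ge\phi(x)\}=(B_R\times[0,M])\setminus H_\phi$ modulo the graph of $\phi$, which is $\L^3$-null; more precisely let $N$ be the set where $a\ge\phi(x)$, so $\L^3(N\cap H_\phi)=0$ since $\{a=\phi(x)\}$ is $\L^3$-negligible. For fixed $t\in(0,1)$, \eqref{E_repr_formula} gives
\begin{equation*}
\int_{\Gamma(t)}\ind_{N}(\gamma(t))\,d\omega_h(\gamma)=\big((e_t)_\sharp[\omega_h\llcorner\Gamma(t)]\big)(N)=(\L^3\llcorner H_\phi)(N)=0.
\end{equation*}
Hence for every $t$, for $\omega_h$-a.e.\ $\gamma\in\Gamma(t)$ one has $\gamma_a(t)<\phi(\gamma_x(t))$. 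Now integrate in $t$ and apply Tonelli to the nonnegative measurable function $(\gamma,t)\mapsto\ind_{\{t\in(t^-_\gamma,t^+_\gamma)\}}\ind_{N}(\gamma(t))$ on $\Gamma\times(0,1)$: the above shows the $t$-integral vanishes for each fixed $\gamma$-slice... wait, the other order — it shows the $\gamma$-integral vanishes for each $t$, so the double integral vanishes, so by Tonelli for $\omega_h$-a.e.\ $\gamma$ the set of $t\in(t^-_\gamma,t^+_\gamma)$ with $\gamma_a(t)\ge\phi(\gamma_x(t))$ is $\L^1$-null. That is property (2).

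For property (1) the argument is the same, replacing $N$ by $D\times[0,M]$ where $D\subset B_R$ is the set of non-Lebesgue points of $\phi$. By the results quoted in the introduction (property (2*), or even just that $\phi\in L^\infty$ so the non-Lebesgue set is $\L^2$-null by Lebesgue differentiation), $\L^2(D)=0$, hence $\L^3\big((D\times[0,M])\cap H_\phi\big)=0$; then \eqref{E_repr_formula} and Tonelli exactly as above give that for $\omega_h$-a.e.\ $\gamma$, for $\L^1$-a.e.\ $t$, $\gamma_x(t)\notin D$, i.e.\ $\gamma_x(t)$ is a Lebesgue point of $\phi$. Intersecting the two $\omega_h$-full sets of curves gives $\Gamma_h$. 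The epigraph case is identical using \eqref{E_repr_e} with $N$ replaced by $\{a\le\phi(x)\}$. The only mild technical point — and the main thing to check carefully — is the joint measurability of $(\gamma,t)\mapsto\gamma(t)$ on $\tilde\Gamma\times(0,1)$ so that Tonelli applies; this follows from the definition of the topology $\tau$ on $\tilde\Gamma$ (uniform convergence of $\gamma_x$, $L^1$-convergence of $\gamma_a$) together with a standard argument, and is precisely the reason the details are deferred to \cite{M_Burgers}.
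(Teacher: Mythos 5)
Your Tonelli argument is exactly the intended one: the paper only says the lemma ``is an application of Tonelli theorem'' and defers to \cite{M_Burgers}, and your proof — pushing the representation formula $(e_t)_\sharp[\omega_h\llcorner\Gamma(t)]=\L^3\llcorner H_\phi$ against the $\L^3$-null sets $\{a=\phi(x)\}$ and $D\times[0,M]$ (with $D$ the non-Lebesgue set of $\phi$) and then exchanging the order of integration on $\Gamma\times(0,1)$ — is precisely that application. You also correctly identify the joint measurability of $(\gamma,t)\mapsto\gamma(t)$ as the only technical point needing care.
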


%
%
%
%
%We prove recursively for $l=1,\ldots,2^{n}-1$ that
%\begin{equation}\label{E_recursion}
%\| \rho^2_{\bar t}\L^3- (e_{l2^{-n}-})_\sharp \omega_n \|\le (1+2(l-1))2^{-n}\e_{2^{-n}}.
%\end{equation}
%Let us consider first the case $l=1$:
%\begin{equation}
%(e_{2^{-n}-})_\sharp \omega_n = \chi^2(\bar t)\L^3 = \rho^2_{\bar t}\L^3 + (\chi^2(\bar t)-\rho^2_{\bar t})\L^3.
%\end{equation}
%By Corollary \ref{C_T} it follows \eqref{E_recursion} for $l=1$.
%In order to get the estimate for every $l=1,\ldots, 2^n-1$ it is sufficient to prove that for every $l=1,\ldots, 2^n-2$ it holds
%\begin{equation}
%\| \rho^2_{\bar t}\L^3- (e_{(l+1)2^{-n}-})_\sharp \omega_n \| \le \| \rho^2_{\bar t}\L^3- (e_{l2^{-n}-})_\sharp \omega_n \| + 2\cdot 2^{-n}\e_{2^{-n}}.
%\end{equation}
%Indeed
%\begin{equation}
%\begin{split}
%\| \rho^2_{\bar t}\L^3- (e_{(l+1)2^{-n}-})_\sharp \omega_n \| \le &~ \| (\rho^2_{\bar t}- \chi^2(\bar t))\L^3\| + 
%\| \chi^2(\bar t)\L^3 - (e_{(l+1)2^{-n}-})_\sharp \omega_n \| \\
%\le &~ 2^{-n}\e_{2^{-n}} + \|\chi^1(\bar t)\L^3 - (e_{l2^{-n}})_\sharp \omega_n\| \\
%\le &~ 2^{-n}\e_{2^{-n}} + \|(\chi^1(\bar t)-\rho^1_{\bar t})\L^3\| + \|\rho^1_{\bar t}\L^3 -(e_{l2^{-n}})_\sharp \omega_n \| \\
%\le &~ 2\cdot 2^{-n}\e_{2^{-n}} + \|\rho^2_{\bar t} \L^3 - (e_{l2^{-n}-})_\sharp \omega_n)\|
%\end{split}
%\end{equation}

\section{Rectifiability of the measure $\nu$}
In this section we prove that the measure $\nu:=(p_x)_\sharp |U_\phi|$ is concentrated on a 1-rectifiable set. 
The rectifiability of $\nu$ is equivalent to the rectifiability of both the measures $(p_x)_\sharp U_\phi^-$ and 
$(p_x)_\sharp U_\phi^+$. Being the two cases analogous we provide the proof of the rectifiability of $(p_x)_\sharp U_\phi^-$ only.

\subsection{Pairing between $\omega_h$ and $\omega_e$ and its decomposition}
In the following lemma we introduce a pairing between the two representations $\omega_h \otimes \mu^-_\gamma$ and 
$\omega_e \otimes \mu^+_\gamma$ of the negative part of the defect measure $\L^1 \times U^-_\phi$. We will denote by $X$ the set $B_R\times [0,M]$.

\begin{lemma}\label{L_pairing}
Denote by $p_1,p_2: ( \Gamma \times [0,1] \times  X)^2\to  \Gamma \times [0,1]\times X$ the standard projections. 
Then there exists a plan $\pi^- \in \M(( \Gamma \times [0,1]\times X)^2)$ with marginals
\begin{equation}\label{E_marginals2}
\begin{split}
(p_1)_{\sharp}\pi^- =   \omega_h\otimes \mu^-_\gamma, \\
(p_2)_\sharp \pi^-=   \omega_e\otimes \mu^+_\gamma,
\end{split}
\end{equation}
concentrated on the set
\begin{equation*}
\begin{split}
\mathcal G := \big\{((\gamma,t^-_\gamma,t^+_\gamma, t,x,a),(\gamma',{t^-_\gamma}',{t^+_\gamma}', t',x',a')) \in (\Gamma \times X)^2 : \, &t\in (t^-_\gamma,t^+_\gamma), t' \in ({t^-_\gamma}',{t^+_\gamma}'), t=t' \\
\gamma_x(t)=x=x'=\gamma'_x(t'), a=a',
&a \in [\gamma_a(t+),\gamma_a(t-)]\cap [\gamma'_a(t'-),\gamma'_a(t'+)]\big\}.
\end{split}
\end{equation*}
\end{lemma}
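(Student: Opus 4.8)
The goal is to build a transport plan $\pi^-$ between the two "defect" measures $\omega_h\otimes\mu^-_\gamma$ and $\omega_e\otimes\mu^+_\gamma$ that is supported on the set $\mathcal G$ of pairs of curves meeting at the \emph{same} time $t$, the \emph{same} space point $x$, the \emph{same} level $a$, with $a$ lying in the jump interval of both curves. The natural way to produce such a plan is to disintegrate both sides in the "physical" variables $(t,x,a)$ and glue fibre by fibre. First I would verify that the $(t,x,a)$-marginals of the two measures coincide: this is exactly the content of \eqref{E_mu-}, which gives
\begin{equation*}
\int_{\Gamma}\mu^-_\gamma\,d\omega_h(\gamma)=\L^1\times U_\phi^-=\int_{\Gamma}\mu^+_\gamma\,d\omega_e(\gamma).
\end{equation*}
Call this common measure $\Lambda\in\M((0,1)\times X)$. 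Since $\mu^-_\gamma$ is a positive measure on $(0,1)\times X$ for every $\gamma$, the map $(\gamma,t^-_\gamma,t^+_\gamma,t,x,a)\mapsto(t,x,a)$ pushes $\omega_h\otimes\mu^-_\gamma$ forward to $\Lambda$, and likewise $\omega_e\otimes\mu^+_\gamma$ pushes forward to $\Lambda$ via $(\gamma',\dots,t',x',a')\mapsto(t',x',a')$.

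**Main construction.** I would then disintegrate: write $\omega_h\otimes\mu^-_\gamma = \int \beta_{(t,x,a)}\,d\Lambda(t,x,a)$ where $\beta_{(t,x,a)}$ is a probability measure on $\Gamma\times[0,1]$ concentrated on triples $(\gamma,t^-_\gamma,t^+_\gamma)$ with $t\in(t^-_\gamma,t^+_\gamma)$, $\gamma_x(t)=x$ and $a\in[\gamma_a(t+),\gamma_a(t-)]$ — these constraints hold $\beta_{(t,x,a)}$-a.e. simply because they define the support of $\mu^-_\gamma$ (recall $\mu^-_\gamma$ charges the jump set $E_\gamma^-$ together with the negative diffuse part, and at a diffuse point the "interval" $[\gamma_a(t+),\gamma_a(t-)]$ degenerates to $\{\gamma_a(t)\}$, which still contains $a=\gamma_a(t)$). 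Symmetrically disintegrate $\omega_e\otimes\mu^+_\gamma=\int\beta'_{(t,x,a)}\,d\Lambda(t,x,a)$ with $\beta'_{(t,x,a)}$ a probability on $\Gamma'\times[0,1]$ concentrated on $(\gamma',{t^-_\gamma}',{t^+_\gamma}')$ with $t\in({t^-_\gamma}',{t^+_\gamma}')$, $\gamma'_x(t)=x$, $a\in[\gamma'_a(t-),\gamma'_a(t+)]$. Now define
\begin{equation*}
\pi^- := \int_{(0,1)\times X} \big(\beta_{(t,x,a)}\otimes\beta'_{(t,x,a)}\big)\,d\Lambda(t,x,a),
\end{equation*}
viewed as a measure on $(\Gamma\times[0,1]\times X)^2$ by carrying along the copy of $(t,x,a)$ (that is, the fibre product over $\Lambda$). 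Its first marginal is $\int\beta_{(t,x,a)}\,d\Lambda=\omega_h\otimes\mu^-_\gamma$ and its second marginal is $\int\beta'_{(t,x,a)}\,d\Lambda=\omega_e\otimes\mu^+_\gamma$, giving \eqref{E_marginals2}. By construction every point in the support satisfies $t=t'$, $x=x'$, $a=a'$ (the two copies of $(t,x,a)$ are identified), $\gamma_x(t)=x$, $\gamma'_x(t')=x'$, $a\in[\gamma_a(t+),\gamma_a(t-)]$ and $a\in[\gamma'_a(t'-),\gamma'_a(t'+)]$, i.e. $\pi^-$ is concentrated on $\mathcal G$.

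**Where the care is needed.** The routine verifications (marginals, measurability of the disintegration, that $\beta_{(t,x,a)}$ is genuinely supported where claimed) are standard once one has the disintegration theorem and the explicit form of $\mu^-_\gamma$. The point that requires a little attention is the compatibility of the two disintegrations: one must check they are taken with respect to the \emph{same} base measure $\Lambda$, which is precisely why \eqref{E_mu-} — not just \eqref{E_dec_1} — is invoked, and one must check that the constraint "$a$ is in the jump interval" is the correct description of $\operatorname{supp}\mu^\mp_\gamma$ including the diffuse part, where the interval collapses to a point. A secondary subtlety, which I would address by recalling the earlier lemma on $\Gamma_h,\Gamma_e$, is that $\omega_h$-a.e. (resp. $\omega_e$-a.e.) curve has $\gamma_x$ Lipschitz with $\dot\gamma_x=ie^{i\gamma_a}$, so that at a common point $(t,x)$ the two curves in fact also share the same velocity — this is not needed for the statement of Lemma \ref{L_pairing} itself but will matter downstream, so it is worth noting that it is automatic on $\mathcal G$. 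The whole argument is a "gluing along common marginals" construction; the only genuine input is \eqref{E_mu-}, and there is no real obstacle beyond bookkeeping.
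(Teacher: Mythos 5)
Your proposal is correct and follows essentially the same route as the paper: both disintegrate $\omega_h\otimes\mu^-_\gamma$ and $\omega_e\otimes\mu^+_\gamma$ over their common $(t,x,a)$-marginal $\L^1\times U_\phi^-$ (using \eqref{E_mu-}) and then take the fibre product of the conditional measures. The extra remarks about the diffuse part collapsing the jump interval to a point and the shared velocity on $\mathcal G$ are accurate but not required for the lemma.
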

\begin{proof}
First we observe that by definition, $\omega_h\otimes \mu^-_\gamma$ is concentrated on the set
\begin{equation*}
\G^-_h:= \{(\gamma,t^-_\gamma,t^+_\gamma,t,x,a) \in  \Gamma \times [0,1] \times X: t \in (t^-_\gamma,t^+_\gamma), \gamma_x(t)=x, a \in [\gamma_a(t+),\gamma_a(t-)]\}
\end{equation*}
and $\omega_e\otimes \mu^+_\gamma$ is concentrated on the set
\begin{equation*}
\G^+_e:= \{(\gamma,t^-_\gamma,t^+_\gamma,t,x,a) \in  \Gamma \times [0,1] \times X: t \in (t^-_\gamma,t^+_\gamma), \gamma_x(t)=x, a \in [\gamma_a(t-),\gamma_a(t+)]\}.
\end{equation*}

Denoting by $p_{2,3}: \Gamma\times [0,1] \times X \to [0,1]\times X$ the standard projection it follows from \eqref{E_mu-} that
\begin{equation*}
(p_{2,3})_\sharp (\omega_h \otimes \mu^-_\gamma) = \L^1 \times U_\phi^- = (p_{2,3})_\sharp (\omega_e \otimes \mu^+_\gamma).
\end{equation*}
By the disintegration theorem (see for example \cite{AFP_book}) there exist two measurable families of probability measures 
$(\mu^{-,h}_{t,x,a})_{(t,x,a) \in X}, (\mu^{+,e}_{t,x,a})_{(t,x,a) \in X} \in \P( \Gamma\times [0,1] \times X)$ such that
\begin{equation}\label{E_disintegration2}
\omega_h \otimes \mu^-_\gamma = \int_{[0,1]\times X} \mu^{-,h}_{t,x,a} d\L^1\times U^-_\phi \qquad \mbox{and} \qquad \omega_e \otimes \mu^+_\gamma =  \int_{[0,1]\times X} \mu^{+,e}_{t,x,a} d \L^1\times U^-_\phi
\end{equation}
and for $\L^1\times U^-_\phi$-a.e. $(t,x,a)$ the measures $\mu^{-,h}_{t,x,a}$ and $\mu^{+,e}_{t,x,a}$ are concentrated on the set 
\begin{equation*}
p_{2,3}^{-1}(\{t,x,a\}) = \{(\gamma,t^-_\gamma,t^+_\gamma,t',x',a')\in  \Gamma \times [0,1] \times X: t'=t,x'=x,a'=a\}.
\end{equation*}
Moreover, since $\omega_h\otimes \mu^-_\gamma$ is concentrated on the set $\G^-_h$ and $\omega_e\otimes \mu^+_\gamma$ is concentrated on the set $\G^+_e$,
we have that for $\L^1\times U^-_\phi$-a.e. $(t,x,a)$ the measure $\mu^{-,h}_{t,x,a}$ is concentrated on $p_{2,3}^{-1}(\{t,x,a\}) \cap \G^-_h$ and $\mu^{+,e}_{t,x,a}$ is concentrated on 
$p_{2,3}^{-1}(\{t,x,a\}) \cap \G^+_e$.
We eventually set
\begin{equation*}
\pi^-:= \int_{[0,1]\times X} \left(\mu^{-,h}_{t,x,a} \otimes \mu^{+,e}_{t,x,a}\right) d(\L^1\times U^-_\phi).
\end{equation*}
From \eqref{E_disintegration2} it directly follows \eqref{E_marginals2} and by the above discussion for $\L^1\times U^-_\phi$-a.e. $(t,x,a) \in [0,1]\times X$ the measure $\mu^{-,h}_{t,x,v} \otimes \mu^{+,e}_{t,x,v}$
is concentrated on $(p_{2,3}^{-1}(\{t,x,a\}) \cap \G^-_h) \times (p_{2,3}^{-1}(\{t,x,a\}) \cap \G^+_e)$, therefore $\pi^-$ is concentrated on
\begin{equation*}
\bigcup_{(t,x,a) \in [0,1]\times X} (p_{2,3}^{-1}(\{t,x,a\}) \cap \G^-_h) \times (p_{2,3}^{-1}(\{t,x,a\}) \cap \G^+_e) = \G
\end{equation*}
and this concludes the proof.
\end{proof}

We now split the set $\G$ introduced in Lemma \ref{L_pairing} in finitely many components.
We first set 
\begin{equation*}
\begin{split}
\G^-_{h,\mathrm{jump}}:= &\left\{(\gamma,t^-_\gamma,t^+_\gamma,t,x,a) \in \G^-_h : \gamma_a(t+)< \gamma_a(t-)\right\},\\
\G^+_{e,\mathrm{jump}}:= &\left\{(\gamma,t^-_\gamma,t^+_\gamma,t,x,a) \in \G^+_e : \gamma_a(t-)< \gamma_a(t+)\right\}.
\end{split}
\end{equation*}
We moreover consider the following covering with overlaps of $[0,M]$. 
Let $L= \lfloor \frac{2M}{\pi}\rfloor$ and for every $l=0,\ldots, L$ set
\begin{equation*}
I_l = \left( l\frac{\pi}{2}-\frac{\pi}{8}, (l+1)\frac{\pi}{2} + \frac{\pi}{8}\right)
\end{equation*}
and 
\begin{equation*}
\begin{split}
\G^-_{h,l}:= &\left\{(\gamma,t^-_\gamma,t^+_\gamma,t,x,a) \in \G^-_h : \gamma_a(t+), \gamma_a(t-) \in I_l \right\}, \\
\G^+_{e,l}:= &\left\{(\gamma,t^-_\gamma,t^+_\gamma,t,x,a) \in \G^+_e : \gamma_a(t-), \gamma_a(t+)\in I_l\right\}.
\end{split}
\end{equation*}
We then define
\begin{equation*}
\pi^-_l := \pi^- \llcorner \left(  \G^-_{h,l} \times \G^+_{e,l} \right), \qquad \pi^-_{\mathrm{jump}}= \pi^-\llcorner  \left( \big(\G^-_{h,\mathrm{jump}} \times \G^+_e\big) \cup  \big(\G^-_h \times \G^+_{e,\mathrm{jump}}\big) \right).
\end{equation*}

We prove separately that $ \nu^-_{\mathrm{jump}}:=(p^1_x)_\sharp \pi^-_{\mathrm{jump}}$ is 1-rectifiable and that  $\nu^-_l:=(p^1_x)_\sharp \pi^-_l$ is rectifiable for every $l=0,\ldots,L$.

\subsection{Rectifiability of $\nu^-_l$}\label{Ss_nu_l}
The proof of the rectifiability of $\nu^-_l$ follows the strategy used in \cite{M_Burgers}. 
In particular the first step is to identify a countable family of Lipschitz curves where we will prove that $\nu^-_l$ is concentrated.
\subsubsection{Shock curves}
 For shortness we denote by
\begin{equation*}
e_l:= ie^{i\left(l\frac{\pi}{2}+\frac{\pi}{4}\right)} \qquad \mbox{and} \qquad e_l^\perp:= i e_l.
\end{equation*}

The following proposition establishes the intuitive fact that a curve of the epigraph cannot cross from below a curve of the hypograph.
Since the same proposition and the following corollary were proven in \cite{M_Burgers} in the case of Burgers equation, we only sketch the arguments here.
\begin{proposition}\label{P_no_crossing}
Let $(\bar \gamma,t^-_{\bar \gamma},t^+_{\bar \gamma}) \in  \Gamma_{h}$ and let $(\tilde t^-_{\bar \gamma},\tilde t^+_{\bar \gamma}) \subset (t^-_{\bar \gamma},t^+_{\bar \gamma})$ be such that 
\begin{equation*}
\gamma_a((\tilde t^-_{\bar \gamma},\tilde t^+_{\bar \gamma}))\subset I_l.
\end{equation*}
We denote by $G_{\mathrm{cr}}(\bar \gamma, \tilde t^-_{\bar \gamma},\tilde t^+_{\bar \gamma})$ the set of curves $(\gamma,t^-_\gamma,t^+_\gamma) \in  \Gamma_e$ for which $\exists \bar t_1,\bar t_2 \in  (\tilde t^-_{\bar \gamma},\tilde t^+_{\bar \gamma})$ and $t_1,t_2 \in (t^-_\gamma,t^+_\gamma)$ such that the following conditions are satisfied:
\begin{enumerate}
\item $t_1<t_2$ and $\bar t_1 < \bar t_2$;
\item $\gamma_a((t_1,t_2)) \subset I_l$;
\item $\gamma_x(t_1) \cdot e_l^\perp = \bar \gamma_x(\bar t_1)\cdot e_l^\perp$ and $ \gamma_x(t_1)\cdot e_l < \bar \gamma_x(\bar t_1) \cdot e_l$;
\item $\gamma_x(t_2) \cdot e_l^\perp = \bar \gamma_x(\bar t_2)\cdot e_l^\perp$ and $ \gamma_x(t_2)\cdot e_l > \bar \gamma_x(\bar t_2) \cdot e_l$.
\end{enumerate}
Then 
\begin{equation*}
\omega_e (G_{\mathrm{cr}}(\bar \gamma, \tilde t^-_{\bar \gamma},\tilde t^+_{\bar \gamma}))=0.
\end{equation*}
\end{proposition}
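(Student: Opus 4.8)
The plan is to argue by contradiction: suppose $\omega_e(G_{\mathrm{cr}}(\bar\gamma,\tilde t^-_{\bar\gamma},\tilde t^+_{\bar\gamma}))>0$, and on the positive-measure set of crossing curves $\gamma$ we will produce a violation of the transport/decomposition structure encoded in Theorem \ref{T_Lagrangian}, in particular of the fact that $(e_t)_\sharp\omega_e\llcorner\Gamma(t)=\L^3\llcorner E_\phi$ for every $t$. The geometric heart of the matter is that on the interval where both $\bar\gamma_a$ and $\gamma_a$ take values in $I_l$ (an interval of length $\tfrac34\pi<\pi$), the horizontal velocities $ie^{i\bar\gamma_a}$ and $ie^{i\gamma_a}$ both lie in an open half-plane: writing the velocity in the $(e_l,e_l^\perp)$ frame, the component along $e_l$ is $\cos(\gamma_a(t)-l\tfrac\pi2-\tfrac\pi4)>\cos(\tfrac{3\pi}{8})>0$, so the $e_l$-coordinate of $\gamma_x$ is \emph{strictly increasing} along such a sub-arc, and likewise for $\bar\gamma_x$. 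Thus each of the two curves, parametrized by their $e_l^\perp$-coordinate on the relevant window, is a graph over a subinterval; more precisely, since $\dot\gamma_x\cdot e_l^\perp = \sin(\gamma_a-l\tfrac\pi2-\tfrac\pi4)$ is bounded in $(-\sin\tfrac{3\pi}{8},\sin\tfrac{3\pi}{8})$ while $\dot\gamma_x\cdot e_l>\cos\tfrac{3\pi}{8}$, both curves are Lipschitz graphs in the $e_l$ direction over their $e_l^\perp$-projection.

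Next I would set up the crossing configuration. By hypothesis $\gamma$ starts (at $\bar t_1$, $t_1$) strictly \emph{behind} $\bar\gamma$ in the $e_l$ direction at the common $e_l^\perp$-level, and ends (at $\bar t_2$, $t_2$) strictly \emph{ahead}. Using the graph property just established, one can reparametrize both $\bar\gamma_x$ and $\gamma_x$ by the common variable $s=\gamma_x\cdot e_l^\perp$ on the overlap of the two $e_l^\perp$-intervals, getting two Lipschitz functions $s\mapsto f_{\bar\gamma}(s)$, $s\mapsto f_\gamma(s)$ (the $e_l$-coordinates). Conditions (3)–(4) say $f_\gamma<f_{\bar\gamma}$ at one level and $f_\gamma>f_{\bar\gamma}$ at another; by the intermediate value theorem there is a first crossing level $s^*$ where $f_\gamma(s^*)=f_{\bar\gamma}(s^*)$, i.e. the two curves pass through the same point $x^*=\gamma_x(t^*)=\bar\gamma_x(\bar t^*)$ with $\gamma$ crossing $\bar\gamma$ from below (in the $e_l$ direction) to above. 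At this point I invoke the good-curve selection: for $\omega_h$-a.e.\ $\bar\gamma$ and $\L^1$-a.e.\ $\bar t$, $\gamma_a(\bar t^*)<\phi(x^*)$ with $x^*$ a Lebesgue point of $\phi$, while for $\omega_e$-a.e.\ $\gamma$ and $\L^1$-a.e.\ $t$, $\gamma_a(t^*)>\phi(x^*)$. Hence $\gamma_a(t^*)>\phi(x^*)>\bar\gamma_a(\bar t^*)$, i.e.\ at the crossing point the epigraph curve sits strictly above the hypograph curve in the $a$-variable.

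The contradiction then comes from comparing the \emph{angular order} forced by the geometry with the order forced by the characteristic equation. Since $\gamma_a(t^*)>\bar\gamma_a(\bar t^*)$ and both lie in $I_l$, the angle $\gamma_a(t^*)-l\tfrac\pi2-\tfrac\pi4$ exceeds $\bar\gamma_a(\bar t^*)-l\tfrac\pi2-\tfrac\pi4$, both in $(-\tfrac{3\pi}{8},\tfrac{3\pi}{8})$, so $\dot\gamma_x(t^*)\cdot e_l^\perp=\sin(\gamma_a(t^*)-l\tfrac\pi2-\tfrac\pi4)>\sin(\bar\gamma_a(\bar t^*)-l\tfrac\pi2-\tfrac\pi4)=\dot{\bar\gamma}_x(\bar t^*)\cdot e_l^\perp$; relative to the increasing $e_l$-coordinate, this means $\gamma$ has strictly larger slope $\frac{d(x\cdot e_l)}{d(x\cdot e_l^\perp)}$ is smaller — wait, one must track signs carefully, but the upshot is that the \emph{graph of $\gamma$ can only cross the graph of $\bar\gamma$ in one direction}, and that direction is incompatible with going from "$f_\gamma<f_{\bar\gamma}$" to "$f_\gamma>f_{\bar\gamma}$" as $s$ increases (this is the standard fact that characteristics of a convex scalar conservation law — here $\sin\circ\cos^{-1}$ on $I_l$ plays the role of the convex flux — fan out so that an entropy-admissible state on the right cannot overtake one on the left). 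Making this precise: at the first crossing $s^*$, one compares one-sided derivatives of $f_\gamma-f_{\bar\gamma}$ and finds the sign is wrong for a crossing from below to above; since $s^*$ was chosen as the \emph{first} such crossing (infimum of the closed set $\{f_\gamma=f_{\bar\gamma}\}$ to the right of the initial level), the function $f_\gamma-f_{\bar\gamma}$ was negative just before $s^*$ and must become positive after, contradicting the derivative sign. This argument is carried out for $\omega_e$-a.e.\ $\gamma\in G_{\mathrm{cr}}$ using that the good-curve properties and the characteristic ODE hold a.e., and Fubini lets us pick, for a positive-$\omega_e$-measure set of $\gamma$, a time $t^*$ at which all the a.e.\ statements simultaneously hold; hence $G_{\mathrm{cr}}$ is $\omega_e$-null.

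The step I expect to be the main obstacle is the rigorous reduction to the single-crossing comparison — specifically, handling the possibility that $\gamma_a$ or $\bar\gamma_a$ have jumps at or near $t^*$ and making sure the "first crossing" $s^*$ is well-defined and attained with the curves genuinely crossing (as opposed to merely touching) transversally enough to read off the derivative sign. This is exactly the delicate point in \cite{M_Burgers}, and the restriction to the window where $\gamma_a,\bar\gamma_a\in I_l$ (so that the effective flux is uniformly convex and velocities stay in an open half-plane) is what makes the graph reparametrization and the one-sided derivative comparison work.
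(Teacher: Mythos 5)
Your approach is genuinely different from the paper's, and it has a gap at exactly the spot you flag as ``the main obstacle'' --- a gap that is not a technicality but the substance of the proof.

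The paper does not argue by contradiction at a first crossing. Instead it introduces a time-dependent functional $\Psi_\delta(t)=\int f(\gamma,t)\,d\omega_e(\gamma)$, where $f(\gamma,t)=\psi_\delta(\cdot)$ is a Lipschitz (smoothed-Heaviside) indicator of ``how far $\gamma_x(t)$ has progressed across the strip around $\bar\gamma_x$.'' Since $f$ depends only on the (Lipschitz) position $\gamma_x(t)$, it is absolutely continuous in $t$, so $\Psi_\delta'(t)$ exists for a.e.\ $t$ and is computed via $\dot\gamma_x(t)=ie^{i\gamma_a(t)}$ and the chain rule. The key estimate bounds $\Psi_\delta'(t)$, for a.e.\ $t$, by $\delta^{-1}$ times an integral of $(\bar\gamma_a-\gamma_a)^+$ over the curves sitting inside a $\delta$-strip $S_\delta$ around $\bar\gamma_x$; because $\bar\gamma\in\Gamma_h$ has $\bar\gamma_a<\phi(\bar\gamma_x)$ at a.e.\ Lebesgue point of $\phi$ along $\bar\gamma_x$, while the epigraph curves satisfy $\gamma_a>\phi(\gamma_x)$, and the marginal bound $(e_t)_\sharp\omega_e\le\L^3\llcorner E_\phi$ controls how much mass can sit in $S_\delta$, this derivative is $o(1)$ as $\delta\to0$. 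Integrating and letting $\delta\to0$ gives $\omega_e(\Gamma_{\mathrm{cr}})\le\liminf_\delta\Psi_\delta(1)=0$. The whole point of this structure is that every estimate is made \emph{for a.e.\ $t$} and then integrated in $t$, so the good-curve properties from the Tonelli lemma --- which are only a.e.-in-$t$ statements --- can be used.

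Your reduction to a single first-crossing time $t^*$ cannot use those same inputs. The time $t^*=t^*(\gamma)$ is \emph{selected by the dynamics}, so Fubini does not let you assume the a.e.\ statements hold there: the set $\{(\gamma,t):t\text{ is bad for }\gamma\}$ is $\omega_e\otimes\L^1$-null, but the graph $\{(\gamma,t^*(\gamma))\}$ is itself $\omega_e\otimes\L^1$-null, so nothing prevents the two from coinciding. Concretely, $\gamma_a$ is only BV, so $\gamma_a$ may have a jump exactly at $t^*$, in which case $\dot\gamma_x(t^*)$ is not determined by a single value of $\gamma_a$ and the one-sided ``derivative comparison'' you want to read off at $s^*$ is not available; and even when there is no jump, the inequalities $\gamma_a(t^*)>\phi(x^*)>\bar\gamma_a(\bar t^*)$ and the Lebesgue-point property of $\phi$ at $x^*$ are only known a.e.\ in time, not at the constrained time $t^*$. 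You name this obstacle yourself but do not resolve it; the paper's time-averaged functional is precisely the device that resolves it, and it is not recoverable from the pointwise argument by a perturbation. A secondary but real issue: you never actually pin down the sign in ``the direction is incompatible'' (your own text says ``wait, one must track signs carefully''). With the stated definition $e_l=ie^{i(l\pi/2+\pi/4)}$ the slope $d(\gamma_x\cdot e_l^\perp)/d(\gamma_x\cdot e_l)=\tan(\gamma_a-l\tfrac\pi2-\tfrac\pi4)$ is \emph{increasing} in $\gamma_a$, so $\gamma_a>\bar\gamma_a$ at the crossing is \emph{consistent} with the epigraph graph crossing the hypograph graph from below to above, and the contradiction you announce does not materialize without further structure. (The paper's own statement of conditions (3)--(4) and of $e_l$ appear to carry an $e_l\leftrightarrow e_l^\perp$ typo; but a correct proof has to fix the orientation unambiguously, and yours leaves it open at exactly the decisive moment.)
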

\begin{proof}
Let
\begin{equation*}
s^-:= \bar \gamma_x(\tilde t^-_{\bar \gamma})\cdot e_l^\perp \qquad \mbox{and} \qquad s^+:=\bar\gamma_x(\tilde t^+_{\bar \gamma})\cdot e_l^\perp. 
\end{equation*}
Since $\bar \gamma_a((\tilde t^-_{\bar \gamma},\tilde t^+_{\bar \gamma}))\subset I_l$ and $\dot{\overline{\gamma}}_x(t) = e^{i\bar \gamma_a(t)}$ for $\L^1$-a.e. $t \in (\tilde t^-_{\bar \gamma},\tilde t^+_{\bar \gamma})$, then the map
\begin{equation*}
\begin{split}
h_{\bar \gamma}: (\tilde t^-_{\bar \gamma},\tilde t^+_{\bar \gamma}) &\to (s^-,s^+) \\
t & \mapsto \bar \gamma_x(t)\cdot e_l^\perp
\end{split}
\end{equation*}
is bi-Lipschitz. For every $s \in (s^-,s^+)$ we set $g_{\bar \gamma}(s)= \gamma_x(h_{\bar \gamma}^{-1}(t))\cdot e_l$.
Let $\delta>0$ and $\psi_\delta: \R\to \R$ be the Lipschitz approximation of the Heaviside function defined by $\psi_\delta(v) = 0 \vee (v/\delta \wedge 1)$. Let us consider a measurable selection of $t_1,t_2$ in $ \Gamma_\mathrm{cr}(\bar \gamma, \tilde t^-_{\bar \gamma},\tilde t^+_{\bar \gamma})$ and let us denote by
\begin{equation*}
 \Gamma_\mathrm{cr}(\bar \gamma, \tilde t^-_{\bar \gamma},\tilde t^+_{\bar \gamma},\delta):=\left\{ (\gamma,t^-_\gamma,t^+_\gamma)\in  \Gamma_\mathrm{cr}(\bar \gamma, \tilde t^-_{\bar \gamma},\tilde t^+_{\bar \gamma}) : \gamma_x(t_{1,\gamma})\cdot e_l - g_{\bar \gamma}(h_{\bar \gamma}(\gamma_x(t)\cdot e_l^\perp))>\delta\right\}.
\end{equation*}
For every $t \in (0,1)$ and $\gamma \in  \Gamma_\mathrm{cr}(\bar \gamma, \tilde t^-_{\bar \gamma},\tilde t^+_{\bar \gamma},\delta)$ set
\begin{equation*}
f(\gamma,t):=
\begin{cases}
0 & \mbox{if }t<t_{1,\gamma}; \\
\psi_\delta(\gamma_x(t)\cdot e_l - g_{\bar \gamma}(\gamma_x(t)\cdot e_l^\perp)) & \mbox{if }t \in (t_{1,\gamma},t_{2,\gamma}); \\
1 & \mbox{if }t>t_{2,\gamma}.
\end{cases}
\end{equation*}
Finally we consider the functional
\begin{equation*}
\Psi_\delta(t):= \int_{ \Gamma_\mathrm{cr}(\bar \gamma, \tilde t^-_{\bar \gamma},\tilde t^+_{\bar \gamma},\delta)}f(\gamma,t)d\omega_e(\gamma).
\end{equation*}
A straightforward computation shows that 
\begin{equation}\label{E_psi_delta}
\Psi_\delta'(t) \le \frac{C}{\delta}\int_{G(\delta,t)}\left(\bar \gamma_a(h_{\bar \gamma}(\gamma_x(t)\cdot e_l^\perp))-\gamma_a(t)\right)^+ d\omega_e(\gamma),
\end{equation}
where
\begin{equation*}
\begin{split}
G(\delta,t) = \big\{  (\gamma, t^-_\gamma,t^+_\gamma) \in   \Gamma_\mathrm{cr}(\bar \gamma, \tilde t^-_{\bar \gamma},\tilde t^+_{\bar \gamma},\delta) :\, & t\in (t_{1,\gamma},t_{2,\gamma}) \mbox{ and }  \\ &\gamma_x(t)\cdot e_l \in (\bar \gamma(h_{\bar \gamma}(\gamma_x(t)\cdot e_l^\perp)), \bar \gamma(h_{\bar \gamma}(\gamma_x(t)\cdot e_l^\perp))+\delta)\big\}.
\end{split}
\end{equation*}
Let us denote by
\begin{equation*}
S_\delta := \{ x \in B_R : x\cdot e_l \in (g_{\bar \gamma}(x\cdot e_l^\perp),g_{\bar \gamma}(x\cdot e_l^\perp)+\delta) \}.
\end{equation*}
Since $ ( e_t)_\sharp \omega_e \llcorner G(\delta,t) \le E_\phi\cap (S_{\delta}\times [0,M])$ and for $\L^1$-a.e. $t \in  (\tilde t^-_{\bar \gamma},\tilde t^+_{\bar \gamma})$ the point $\bar \gamma_x(t)$ is a Lebesgue point of $\phi$ with value larger than $\bar \gamma_a(t)$ 
we obtain from \eqref{E_psi_delta} that $\Psi_\delta'(t) \le o(1)$ as $\delta \to 0$.
By definition of the functional $\Psi_\delta$ it holds
\begin{equation*}
\omega_e (  \Gamma_\mathrm{cr}(\bar \gamma, \tilde t^-_{\bar \gamma},\tilde t^+_{\bar \gamma}))\le \liminf_{\delta \to 0} \Psi_\delta(1) =0. \qedhere
\end{equation*} 
\end{proof}

\begin{corollary}\label{C_no_crossing}
Let $\bar x \in B_R$ and denote by $\Gamma^+_l(\bar x)$ the set of curves $(\gamma,t^-_\gamma,t^+_\gamma) \in \Gamma_h$ 
for which there exists $t_1 \in (t^-_\gamma, t^+_\gamma)$ such that
\begin{equation*}
 \gamma_x(t_1)\cdot e_l^\perp = \bar x \cdot e_l^\perp \qquad \mbox{and} \qquad  \gamma_x(t_1)\cdot e_l> \bar x \cdot e_l.
\end{equation*}
Similarly let $\Gamma^-_l(\bar x)$ the set of curves $(\gamma,t^-_\gamma,t^+_\gamma) \in \Gamma_e$ 
for which there exists $t_1' \in (t^-_\gamma, t^+_\gamma)$ such that
\begin{equation*}
 \gamma_x(t_1)\cdot e_l^\perp = \bar x \cdot e_l^\perp \qquad \mbox{and} \qquad  \gamma_x(t_1)\cdot e_l< \bar x \cdot e_l.
\end{equation*}
Then there exists a Lipschitz function $f_{\bar x,l}:[\bar x\cdot e_l^\perp,+\infty) \to \R$ such that 
\begin{equation}\label{E_cross}
\begin{split}
&\omega_h(\{(\gamma,t^-_\gamma,t^+_\gamma) \in \Gamma^+_l(\bar x) : \exists t_2 \in (t_1,t^+_\gamma) \mbox{ s.t. }\gamma_a((t_1,t_2))\subset I_l \mbox{ and }\gamma_x(t_2)\cdot e_l < f_{\bar x,l}(\gamma_x(t_2)\cdot e_l^\perp)  \})=0, \\
&\omega_e(\{(\gamma,t^-_\gamma,t^+_\gamma) \in \Gamma^-_l(\bar x) : \exists t_2' \in (t_1',t^+_\gamma) \mbox{ s.t. }\gamma_a((t_1',t_2'))\subset I_l \mbox{ and }\gamma_x(t_2')\cdot e_l > f_{\bar x,l}(\gamma_x(t_2')\cdot e_l^\perp)  \})=0,
\end{split}
\end{equation}
where $t_1,t_1'$ are as above.
\end{corollary}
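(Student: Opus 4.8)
The plan is to realise $f_{\bar x,l}$ as a Lipschitz curve that separates the part of $B_R$ reached by hypograph characteristics passing above $\bar x$ from the part reached by epigraph characteristics passing below $\bar x$, the separation persisting as long as the vertical components stay in $I_l$. The second identity in \eqref{E_cross} will then be a direct consequence of Proposition~\ref{P_no_crossing}, and the first one will be essentially tautological. I would first isolate the elementary geometric fact already used inside the proof of Proposition~\ref{P_no_crossing}: if $\gamma_a$ takes values in $I_l$ on a time interval $I$, then by~\eqref{E_characteristic} the map $t\mapsto\gamma_x(t)\cdot e_l^\perp$ is a strictly monotone bi-Lipschitz reparametrisation of $I$, and in the variable $s=\gamma_x(t)\cdot e_l^\perp$ (the transverse variable) the function $s\mapsto\gamma_x(t(s))\cdot e_l$ is $\Lambda$-Lipschitz with $\Lambda:=\tan\frac{3\pi}{8}$ (the bound coming from the fact that $I_l$ has half-width $\frac{3\pi}{8}$).

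Consequently, for $(\gamma,t^-_\gamma,t^+_\gamma)\in\Gamma^+_l(\bar x)$ that admits some $t_2\in(t_1,t^+_\gamma)$ with $\gamma_a((t_1,t_2))\subset I_l$, the arc of $\gamma$ run between $t_1$ and the first time $\gamma_a$ exits $I_l$ is the graph $\{x\cdot e_l=g_\gamma(x\cdot e_l^\perp)\}$ of a $\Lambda$-Lipschitz function $g_\gamma$ on an interval $[\bar x\cdot e_l^\perp,b_\gamma)$, with $g_\gamma(\bar x\cdot e_l^\perp)=\gamma_x(t_1)\cdot e_l>\bar x\cdot e_l$. If $\omega_h$ gives no mass to such curves I let $f_{\bar x,l}$ be any fixed $\Lambda$-Lipschitz function; otherwise I set
\[
f_{\bar x,l}(s):=\inf\Big\{\, g_\gamma(\sigma)+\Lambda(s-\sigma)\ :\ \gamma\in\Gamma^+_l(\bar x),\ \bar x\cdot e_l^\perp\le\sigma\le\min\{s,b_\gamma\}\,\Big\},\qquad s\ge\bar x\cdot e_l^\perp.
\]
The inf-convolution structure makes $f_{\bar x,l}$ globally $\Lambda$-Lipschitz on $[\bar x\cdot e_l^\perp,+\infty)$ and finite (using $g_\gamma(\bar x\cdot e_l^\perp)>\bar x\cdot e_l$ together with the slope bound), and $f_{\bar x,l}\le g_\gamma$ on the domain of each $g_\gamma$. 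The first identity in \eqref{E_cross} is then immediate: for any admissible $t_2$ one has $\gamma_x(t_2)\cdot e_l=g_\gamma(\gamma_x(t_2)\cdot e_l^\perp)\ge f_{\bar x,l}(\gamma_x(t_2)\cdot e_l^\perp)$, so the exceptional set in the first line is empty.

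For the second identity I would first replace the infimum above by one over a countable subfamily $\{\gamma^{(k)}\}_{k\in\N}\subset\Gamma^+_l(\bar x)$ without changing $f_{\bar x,l}$ (realise the infimum at each rational $s$; both the full and the countable infimum are $\Lambda$-Lipschitz, hence they agree everywhere once they agree on a dense set). Now suppose $(\gamma',t^-_{\gamma'},t^+_{\gamma'})\in\Gamma^-_l(\bar x)$ and $t_2'\in(t_1',t^+_{\gamma'})$ satisfy $\gamma'_a((t_1',t_2'))\subset I_l$ and $\gamma'_x(t_2')\cdot e_l>f_{\bar x,l}(\gamma'_x(t_2')\cdot e_l^\perp)$. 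On $(t_1',t_2')$ the curve $\gamma'$ is also a $\Lambda$-Lipschitz graph over the transverse variable, starting at $(\bar x\cdot e_l^\perp,\gamma'_x(t_1')\cdot e_l)$ with $\gamma'_x(t_1')\cdot e_l<\bar x\cdot e_l$; write $s_2':=\gamma'_x(t_2')\cdot e_l^\perp>\bar x\cdot e_l^\perp$ (strict, since the transverse variable is strictly monotone there). Pick $k$ and $\sigma\in[\bar x\cdot e_l^\perp,\min\{s_2',b_{\gamma^{(k)}}\})$ with $g_{\gamma^{(k)}}(\sigma)+\Lambda(s_2'-\sigma)<\gamma'_x(t_2')\cdot e_l$. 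Combining the slope bounds for $\gamma'$ and $\gamma^{(k)}$ with the inequalities $\gamma^{(k)}_x(t_1)\cdot e_l>\bar x\cdot e_l>\gamma'_x(t_1')\cdot e_l$ forces $\sigma>\bar x\cdot e_l^\perp$ and shows that at transverse level $\bar x\cdot e_l^\perp$ the epigraph curve $\gamma'$ lies strictly below $\gamma^{(k)}$, while at transverse level $\sigma$ it lies strictly above it. This is exactly the situation ruled out by Proposition~\ref{P_no_crossing}, applied with $\bar\gamma=\gamma^{(k)}$, with $(\tilde t^-_{\bar\gamma},\tilde t^+_{\bar\gamma})$ the maximal time interval on which $\gamma^{(k)}_a\in I_l$ that contains the instant at which $\gamma^{(k)}$ reaches transverse level $\bar x\cdot e_l^\perp$, and with the four instants of conditions (1)--(4) being those at which $\gamma^{(k)}$ and $\gamma'$ reach the transverse levels $\bar x\cdot e_l^\perp$ and $\sigma$; thus $\gamma'\in G_{\mathrm{cr}}(\gamma^{(k)},\tilde t^-_{\gamma^{(k)}},\tilde t^+_{\gamma^{(k)}})$. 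Hence the exceptional set in the second line of \eqref{E_cross} is contained in $\bigcup_k G_{\mathrm{cr}}(\gamma^{(k)},\tilde t^-_{\gamma^{(k)}},\tilde t^+_{\gamma^{(k)}})$, which is $\omega_e$-negligible by Proposition~\ref{P_no_crossing}.

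The routine parts are the slope/Lipschitz estimate and the emptiness of the first exceptional set. The \emph{main obstacle} is the second part: one must design the envelope so that it is simultaneously a pointwise lower bound for the graphs $g_\gamma$ and globally Lipschitz, despite distinct characteristics remaining in $I_l$ for different lengths of time (this forces the inf-convolution), and one must pass to a countable subfamily in order to apply the single-curve Proposition~\ref{P_no_crossing}. The delicate points in the crossing step are to guarantee that the crossing occurs at strictly separated transverse levels---so that the two pairs of times in Proposition~\ref{P_no_crossing} are genuinely ordered---and that the chosen $\sigma$ lies inside the interval on which $\gamma^{(k)}$ is a graph; both follow from the slope bound together with the strict inequalities on the $e_l$-heights at transverse level $\bar x\cdot e_l^\perp$.
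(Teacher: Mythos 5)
Your proof is correct and follows essentially the same approach as the paper: you build $f_{\bar x,l}$ as the largest $\Lambda$-Lipschitz function (with $\Lambda=\tan(3\pi/8)$, coming from $\gamma_a\in I_l$) sitting below the $e_l$-graphs of all hypograph characteristics that pass over $\bar x$, realized explicitly via a one-sided inf-convolution rather than the paper's verbal ``biggest $C$-Lipschitz function below $\tilde f_{\bar x,l}$.'' The first identity is then tautological, and the second follows, as in the paper, from a reduction of the envelope to a countable subfamily together with Proposition~\ref{P_no_crossing}.
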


\begin{proof}
Let $I\subset [\bar x,+\infty)$ be the set of values $y$ for which there exists $\gamma\in \Gamma^+_{l}(\bar x)$ and $t\in (t_1,t_2)$ such that $\gamma_x(t)\cdot e_l^\perp = y$, where $t_1$ and $t_2$ are as in the statement. Let $\tilde f_{\bar x,l}$ be defined on $I$ by
\begin{equation*}
\tilde f_{\bar x,l}(y):= \inf \left\{ \gamma_x(t)\cdot e_l : \gamma\in \Gamma^+_{l}(\bar x), t\in (t_1,t_2),  \gamma_x(t)\cdot e_l^\perp = y \right\}.
\end{equation*}
The function $f_{\bar x,l}$ is defined as the biggest $C$-Lipschitz function such that $f_{\bar x,l}\le\tilde f_{\bar x,l}$ on $I$ and $f_{\bar x,l}=\bar x \cdot e_l$, where $C>\tan(3\pi/8)$. 
The first equality in \eqref{E_cross} follows from the fact that $f_{\bar x,l}\le\tilde f_{\bar x,l}$ on $I$.
The second equality in \eqref{E_cross} follows from Proposition \ref{P_no_crossing} since the infimum in the definition of $\tilde f_{\bar x,l}$ can be realized taking only countably many curves in $\Gamma^+_{l}(\bar x)$ and for every $a \in I_l$ it holds 
\begin{equation*}
ie^{ia}\cdot e_l^\perp \ge \cos \left(\frac{3\pi}{8}\right). \qedhere
\end{equation*}
\end{proof}

The following elementary lemma is about functions of bounded variation of one variable: we refer to \cite{AFP_book} for the theory of BV functions.
\begin{lemma}\label{L_BV}
Let $v: (a,b)\to \R$ be a $\BV$ function and denote by $D^-v$ the negative part of the measure $Dv$. Then for $\tilde D^-v$-a.e. $\bar x \in (a,b)$ there exists $\delta >0$ such that
\begin{equation*}
\bar v(x) > \bar v(\bar x) \quad \forall x \in (\bar x - \delta, \bar x) \qquad \mbox{and} \qquad \bar v(x) < \bar v(\bar x) \quad \forall x \in (\bar x, \bar x +  \delta).
\end{equation*}
\end{lemma}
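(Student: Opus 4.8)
The plan is to prove this as a standard pointwise differentiation statement for BV functions of one variable. Recall that for $v \in \BV((a,b))$ the measure $Dv$ decomposes as $\tilde Dv + D^j v$, and the negative part $D^-v = \tilde D^- v + (D^j v)^-$; here we look only at the diffuse negative part $\tilde D^- v$, which carries no atoms. Write $\mu := |\tilde D^- v|$ and $\nu := |\tilde D^+ v| = |\tilde D v| - \mu$, both finite nonnegative measures on $(a,b)$ that are nonatomic.

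First I would normalize to the good representative $\bar v$ (the one that is both left- and right-continuous off the jump set, say $\bar v(x) = \frac12(v(x+)+v(x-))$ away from jumps; at $\tilde D^- v$-a.e.\ point $\bar x$ there is no jump, so $\bar v$ is continuous there). The key quantitative input is the Radon–Nikodym/Besicovitch differentiation theorem together with the mutual singularity of $\tilde D^- v$ and $\tilde D^+ v$: for $\tilde D^- v$-a.e.\ $\bar x$ we have
\begin{equation*}
\lim_{\delta \to 0} \frac{\nu((\bar x - \delta, \bar x + \delta))}{\mu((\bar x - \delta, \bar x + \delta))} = 0 ,
\qquad \mu((\bar x-\delta,\bar x+\delta)) > 0 \ \ \forall \delta>0,
\end{equation*}
and moreover $\bar x$ is not an atom of $Dv$ and the jump part $D^j v$ has density zero there in the same sense. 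I would fix such a point $\bar x$, continuity point of $\bar v$, and aim to produce the required one-sided strict monotonicity on a small interval.

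The heart of the argument: for $x$ in a right neighborhood of $\bar x$, $\bar v(x) - \bar v(\bar x) = Dv((\bar x, x])$ (using that $\bar x$ is not an atom), and this equals $\tilde D v((\bar x,x]) + D^j v((\bar x,x])$. Split $\tilde D v = \tilde D^+ v - \tilde D^- v$. The claim $\bar v(x) < \bar v(\bar x)$ for $x \in (\bar x, \bar x+\delta)$ will follow if I can show $\tilde D^- v((\bar x, x]) > \tilde D^+ v((\bar x,x]) + |D^j v|((\bar x,x])$ for all such $x$. This is where I expect the main obstacle: a ratio-of-measures bound like the displayed limit controls the \emph{dyadic-symmetric} behavior but does not by itself prevent, say, $\tilde D^- v$ from vanishing on a sequence of tiny intervals $(\bar x, \bar x + \delta_k]$ while being concentrated just to the left. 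The right tool is a density/Lebesgue-point statement for the \emph{positive} part relative to the total variation: at $\tilde D^-v$-a.e.\ $\bar x$,
\begin{equation*}
\lim_{\delta\to 0}\frac{\tilde D^+ v((\bar x,\bar x+\delta]) + |D^j v|((\bar x,\bar x+\delta])}{\tilde D^- v((\bar x,\bar x+\delta])} = 0,
\end{equation*}
which is a consequence of Besicovitch differentiation applied to the one-sided intervals $(\bar x,\bar x+\delta]$ — these form a Vitali/fine covering family, so the differentiation theorem applies and mutual singularity of $\tilde D^+ v$, $|D^j v|$ with $\tilde D^- v$ gives the zero limit for a.e.\ $\bar x$ with respect to $\tilde D^- v$, \emph{provided} $\tilde D^- v((\bar x,\bar x+\delta]) > 0$ for all $\delta$. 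The latter positivity, for $\tilde D^- v$-a.e.\ $\bar x$, is exactly the statement that $\tilde D^- v$-a.e.\ point is in the (essential) closure of $\mathrm{supp}(\tilde D^- v)$ from the right, which again follows from differentiation (the set of points where $\tilde D^- v$ has an empty right-neighborhood trace is $\tilde D^- v$-null). The symmetric statement on left intervals $[\bar x - \delta, \bar x)$ handles the other inequality.

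Putting it together: at such a $\bar x$, pick $\delta>0$ small enough that on $(0,\delta]$ the above ratios are, say, $< 1$; then for every $x \in (\bar x, \bar x+\delta)$,
\begin{equation*}
\bar v(x) - \bar v(\bar x) = \tilde D^+ v((\bar x,x]) - \tilde D^- v((\bar x,x]) + D^j v((\bar x,x]) < 0,
\end{equation*}
and symmetrically $\bar v(x) - \bar v(\bar x) = -\tilde D^+ v([x,\bar x)) + \tilde D^- v([x,\bar x)) - D^j v([x,\bar x)) > 0$ for $x \in (\bar x-\delta,\bar x)$, which is the assertion. The main work is the differentiation-theory lemma with one-sided interval families and the verification that the "bad" set of points lacking right/left support is null for $\tilde D^- v$; everything else is bookkeeping with the decomposition of $Dv$. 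I would either cite the relevant Besicovitch-type statement from \cite{AFP_book} or give the one-paragraph covering argument directly.
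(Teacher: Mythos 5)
The paper gives no proof of this lemma (it is labeled ``elementary'' with a pointer to \cite{AFP_book}), so I'll assess your argument on its own terms. Your overall plan is correct and, I believe, is the intended one: reduce to a one-sided differentiation statement for the mutually singular measures $\tilde D^- v$ and $\tilde D^+ v + (D^j v)^+$ and conclude that $Dv((\bar x, x])<0$ for $x$ in a small right-neighborhood of $\bar x$ (and symmetrically on the left). You also correctly identify that the symmetric (ball-centered) differentiation theorem is \emph{not} enough and that the crux is a one-sided statement of the form
\begin{equation*}
\lim_{\delta\to 0^+}\frac{\big(\tilde D^+ v + |D^j v|\big)\big((\bar x,\bar x+\delta]\big)}{\tilde D^- v\big((\bar x,\bar x+\delta]\big)}=0\qquad\text{for $\tilde D^- v$-a.e.\ }\bar x.
\end{equation*}

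The one weak point is your justification of this one-sided differentiation step. You invoke ``Besicovitch differentiation applied to one-sided intervals'' and say that the intervals $(\bar x,\bar x+\delta]$ ``form a Vitali/fine covering family, so the differentiation theorem applies.'' That is not a correct citation: the Besicovitch covering/differentiation theorem concerns balls \emph{centered} at the points, and the Vitali covering theorem for a general (non-doubling) Radon measure likewise requires centered balls. One-sided intervals are not centered, so neither theorem applies verbatim, and a separate argument is needed. The cleanest route in one dimension is a change of variables via the distribution function $G(y)=\tilde D^- v\big((a,y]\big)$: since $G$ is continuous and nondecreasing, $G_\sharp \tilde D^- v$ is Lebesgue measure on $(0,\|\tilde D^- v\|)$, one checks $\mathscr L^1(G(E))\le \tilde D^- v(E)$ for Borel $E$ so that $G_\sharp\big(\tilde D^+ v + |D^j v|\big)$ is still singular with respect to $\mathscr L^1$, and then the one-sided Lebesgue point theorem for singular measures against $\mathscr L^1$ (a direct consequence of a.e.\ differentiability of monotone functions) transfers back. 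Alternatively, cite the classical theory of Dini derivates of monotone functions. Either way, you should replace the appeal to Besicovitch/Vitali by an actual argument or a precise reference.

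Two minor remarks. First, the positivity $\tilde D^- v\big((\bar x,\bar x+\delta]\big)>0$ for all $\delta>0$ at $\tilde D^- v$-a.e.\ $\bar x$ is best justified directly (the right-isolated points of $\mathrm{supp}(\tilde D^- v)$ form a countable set, hence are $\tilde D^- v$-null since this measure is nonatomic); your ``follows from differentiation'' is a bit circular, as you are using it to make the differentiation statement meaningful in the first place. Second, in the display giving $\bar v(x)-\bar v(\bar x)$ you should bound the jump contribution by $(D^j v)^+$, not by $D^j v$ itself, before invoking mutual singularity with $\tilde D^- v$; the rest of the bookkeeping (choice of representative, continuity of $\bar v$ at $\tilde D^- v$-a.e.\ $\bar x$) is fine.
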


We are now in position to prove the rectifiability of $\nu^-_l$.
\begin{proposition}\label{P_l}
The measure $ \nu_l^-$ is concentrated on the set
\begin{equation*}
\bigcup_{\bar x \in \Q^2 \cap B_R} C_{f_{\bar x,l}}, \qquad \mbox{where} \qquad C_{f_{\bar x,l}}:=B_R \cap \bigcup_{s> \bar x \cdot e_l^\perp} \left\{se_l^\perp + f_{\bar x, l}(s) e_l \right\} 
\end{equation*}
\end{proposition}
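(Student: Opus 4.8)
The plan is to show that for $\pi^-_l$-almost every pair
\[ z = \big((\gamma,t^-_\gamma,t^+_\gamma,t,x,a),(\gamma',{t^-_\gamma}',{t^+_\gamma}',t',x',a')\big) \]
there is a rational point $\bar x\in\Q^2\cap B_R$ with $x\in C_{f_{\bar x,l}}$; since $\nu^-_l=(p^1_x)_\sharp\pi^-_l$ this is exactly the assertion. First I would put $z$ into a normal form. By the selection lemma for good curves we may assume $\gamma\in\Gamma_h$ and $\gamma'\in\Gamma_e$; by the definitions of $\G$, $\G^-_{h,l}$, $\G^+_{e,l}$ and of $\pi^-_l$ we have $t=t'$, $\gamma_x(t)=x=x'=\gamma'_x(t')$, $a=a'$, and $\gamma_a(t\pm),\gamma'_a(t\pm)$ all belong to the open interval $I_l$. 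Recalling that $\mu^-_\gamma$ is concentrated where $\gamma_a$ has a diffuse decrease, together with the open jump intervals $E^-_\gamma$, Lemma \ref{L_BV} applied to $\gamma_a$ (the jump part being elementary) lets me assume that there is $\delta>0$ with $\gamma_a(s)>a$ on $(t-\delta,t)$ and $\gamma_a(s)<a$ on $(t,t+\delta)$; symmetrically, since $\mu^+_{\gamma'}$ lives where $\gamma'_a$ has a diffuse increase together with the intervals $E^+_{\gamma'}$, I may assume $\gamma'_a(s)<a$ on $(t-\delta,t)$ and $\gamma'_a(s)>a$ on $(t,t+\delta)$. Finally, since $I_l$ is open and convex and the one-sided limits at $t$ lie in it, shrinking $\delta$ I may also assume $\gamma_a(s),\gamma'_a(s)\in I_l$ for all $s\in(t-\delta,t+\delta)\setminus\{t\}$, with the jump intervals of $\gamma_a,\gamma'_a$ at $t$ contained in $I_l$ as well. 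This last reduction is precisely what the definition of $\pi^-_l$ makes available, and it is what replaces the hypothesis ``oscillation of $\phi$ less than $\pi$'' of the Burgers-type argument.

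From here I would proceed as in \cite{M_Burgers}, working in the frame with orthonormal axes $e_l^\perp,e_l$. Since $\gamma_a,\gamma'_a\in I_l$ keeps the characteristic direction in an arc shorter than a half-turn, along any curve whose $a$-component stays in $I_l$ one of the two coordinates is a strictly monotone bi-Lipschitz parameter and the curve is a $\tan(3\pi/8)$-Lipschitz graph; moreover, by the good-curve lemma $\phi>a$ along the branch of $\gamma$ on $(t-\delta,t)$ and $\phi<a$ along that of $\gamma'$, so these branches lie on opposite sides of the shock through $x$. I would then pick $\eta\in(0,\delta)$ small and a rational $\bar x\in\Q^2\cap B_R$ close to $\gamma'_x(t-\eta)$, slightly displaced so that the backward branch of $\gamma'$ exhibits $\gamma'\in\Gamma^-_l(\bar x)$, the branch of $\gamma$ — lying on the other side of the shock — exhibits $\gamma\in\Gamma^+_l(\bar x)$, and $x\cdot e_l^\perp$ lies in the domain $[\bar x\cdot e_l^\perp,+\infty)$ of $f_{\bar x,l}$. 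Letting $s\to t^-$ along $\gamma$ and using the continuity of $\gamma_x$, the definition of $\tilde f_{\bar x,l}$, and the Lipschitz continuity of $f_{\bar x,l}\le\tilde f_{\bar x,l}$, I obtain the lower bound $x\cdot e_l\ge f_{\bar x,l}(x\cdot e_l^\perp)$.

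For the reverse inequality I would argue by contradiction: suppose $x\cdot e_l>f_{\bar x,l}(x\cdot e_l^\perp)$. Picking a time $t_1'\in({t^-_\gamma}',t)$ at which $\gamma'$ is at the $e_l^\perp$-coordinate of $\bar x$ and strictly below it — this exists by the monotonicity above and witnesses $\gamma'\in\Gamma^-_l(\bar x)$ — the normalization gives $\gamma'_a\in I_l$ on $(t_1',t)$, while $\gamma'_x(t)\cdot e_l=x\cdot e_l>f_{\bar x,l}(x\cdot e_l^\perp)$. Hence $\gamma'$ lies in the set appearing in the second identity of \eqref{E_cross}, which Corollary \ref{C_no_crossing} declares $\omega_e$-negligible. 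Taking the union over $\bar x\in\Q^2\cap B_R$ of these sets produces an $\omega_e$-negligible set $N$; since the second marginal of $\pi^-_l$ is dominated by $\omega_e\otimes\mu^+_\gamma$, the set of pairs $z$ with $\gamma'\in N$ is $\pi^-_l$-negligible, and for every $z$ outside it the choice $\bar x=\bar x(z)$ forces $x\cdot e_l=f_{\bar x,l}(x\cdot e_l^\perp)$, i.e. $x\in C_{f_{\bar x,l}}$. Therefore $\nu^-_l$ is concentrated on $\bigcup_{\bar x\in\Q^2\cap B_R}C_{f_{\bar x,l}}$.

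The step I expect to be the main obstacle is the normal form combined with the side-bookkeeping in the rotated frame: the no-crossing statements of Proposition \ref{P_no_crossing} and Corollary \ref{C_no_crossing} genuinely require the characteristic directions to span less than a half-turn, so everything has to be localized inside a single $I_l$, and one must upgrade ``$\gamma_a(t\pm),\gamma'_a(t\pm)\in I_l$'' to ``$\gamma_a,\gamma'_a\in I_l$ on a punctured neighbourhood of $t$'' — where the $\BV$ one-sided-limit structure and Lemma \ref{L_BV} do the work — and keep track, branch by branch, of which side of the shock each curve lies on. Apart from this localization the argument reproduces the one for Burgers' equation in \cite{M_Burgers}.
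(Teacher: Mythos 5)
Your proposal follows the same route as the paper's proof: a normal form for a $\pi^-_l$-typical pair obtained via Lemma~\ref{L_BV} (so that on a left neighbourhood of $t$ both $\gamma_a$ and $\gamma'_a$ stay in $I_l$ and on opposite sides of $a$), a rational $\bar x$ wedged between the two backward branches so that $\gamma\in\Gamma^+_l(\bar x)$ and $\gamma'\in\Gamma^-_l(\bar x)$, and the no-crossing Corollary~\ref{C_no_crossing} to pin $x$ to the graph $C_{f_{\bar x,l}}$ --- this is precisely the paper's Steps~1--4, merely reorganized so that the fixed $\omega_h$- and $\omega_e$-negligible sets (unions over $\Q^2\cap B_R$) are removed pointwise rather than after isolating $\bar x$. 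The one place where your write-up is looser than the paper is the reason the two backward branches separate: ``$\phi>a$ along $\gamma$ and $\phi<a$ along $\gamma'$, hence opposite sides of the shock'' is suggestive but is not what produces a positive gap in which to place a rational $\bar x$; the paper instead uses the characteristic equation together with the sign conditions $\gamma_a>a>\gamma'_a$ supplied by the normal form to derive the quantitative separation estimates \eqref{E_gamma}--\eqref{E_gamma'}, giving $\gamma'_x(s')\cdot e_l<\gamma_x(s)\cdot e_l$ whenever the two points share the same $e_l^\perp$-level near $x$, so that a rational $\bar x$ can be chosen strictly between them. With that substitution your argument coincides with the paper's.
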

\begin{proof}
Step 1. For every $\bar x \in B_R\cap \Q^2$ and every $(\gamma,t^-_\gamma,t^+_\gamma)\in \Gamma_h$ we consider the open set $I^+_{\bar x,l,\gamma}\subset (t^-_\gamma,t^+_\gamma)$ defined by the following property: we say that $t\in I^+_{\bar x,l,\gamma}$ if there exists $t'\in (t^-_\gamma,t)$ such that 
\begin{equation*}
\gamma_a((t',t))\subset I_l, \qquad \gamma_x(t')\cdot e_l^\perp=\bar x \cdot e_l^\perp, \qquad \gamma_x(t')\cdot e_l > \bar x \cdot e_l.
\end{equation*}
We moreover set
\begin{equation*}
\G^>_{\bar x,l}:= \left\{ (\gamma,t^-_\gamma,t^+_\gamma,t,x,a) \in \Gamma_h\times (0,1)\times B_R \times [0,M] : t \in I^+_{\bar x, l ,\gamma} \right\}.
\end{equation*}
Similarly for every $(\gamma,t^-_\gamma,t^+_\gamma) \in \Gamma_e$ we let $I^-_{\bar x,l,\gamma}\subset (t^-_\gamma,t^+_\gamma)$ be the set of $t$ for which $\exists t'\in (t^-_\gamma,t)$ such that 
\begin{equation*}
\gamma_a((t',t))\subset I_l, \qquad \gamma_x(t')\cdot e_l^\perp=\bar x \cdot e_l^\perp, \qquad \gamma_x(t')\cdot e_l < \bar x \cdot e_l
\end{equation*}
and we set
\begin{equation*}
\G^<_{\bar x,l}:= \left\{ (\gamma,t^-_\gamma,t^+_\gamma,t,x,a) \in \Gamma_e\times (0,1)\times B_R \times [0,M] : t \in I^-_{\bar x, l ,\gamma} \right\}.
\end{equation*}
%
%
%\begin{equation**}
%\begin{split}
%\mathcal G^>_{\bar x,l}:= &~ G^>_{\bar x,l} \times (0,1) \times \{x \in B_R: x\cdot e_l\ge \bar x \cdot e_l\} \times I_l, \\ 
%\mathcal G^<_{\bar x,l}:= &~ G^<_{\bar x,l} \times (0,1) \times \{x \in B_R: x\cdot e_l\ge \bar x \cdot e_l\} \times I_l
%\end{split}
%\end{equation**} 
We consider
\begin{equation*}
\pi^-_{\bar x,l}:= \pi^- \llcorner \left( \mathcal G^>_{\bar x,l} \times \mathcal G^<_{\bar x,l}\right)
\end{equation*}
and we prove that $(p^1_x)_\sharp \pi^-_{\bar x,l}$ is concentrated on $C_{f_{\bar x,l}}$, 
where 
\begin{equation*}
\begin{split}
p^1_x: ( \Gamma\times (0,1) \times B_R \times [0,M])^2 & \to B_R \\
(\gamma,t^-_\gamma,t^+_\gamma,t,x,a,\gamma',{t^-_\gamma}',{t^+_\gamma}', t',x',a') & \mapsto x.
\end{split}
\end{equation*}

%Step 1. For every $(\bar t, \bar x)\in [0,T)\times \R$ we consider the measure
%\begin{equation*}
%\pi^-_{\bar t, \bar x} := \pi^- \llcorner \big( G_{\bar t, \bar x}^l \times [\bar t, T] \times \R \times [0,1])\times (G_{\bar t, \bar x}^r \times [\bar t, T] \times \R \times [0,1]) \big),
%\end{equation*}
%where $G_{\bar t, \bar x}^l, G_{\bar t, \bar x}^r$ are defined in \eqref{E_def_G}
%and we set
%\begin{equation*}
%\begin{split}
%p^1_{t,x}: (\Gamma\times [0,T]\times \R\times [0,1])^2 & \to [0,T]\times \R. \\
%(\gamma,t,x,v,\gamma',t',x',v') & \mapsto (t,x)
%\end{split}
%\end{equation*}
%We prove that the measure $(p^1_{t,x})_\sharp \pi^-_{\bar t, \bar x}$ is concentrated on the graph of $f_{\bar t,\bar x}$, namely $C_{f_{\bar t,\bar x}}$.
Trivially it holds 
\begin{equation}\label{E_est_Omega+}
(p^1_x)_\sharp \pi^-_{\bar x,l} \le  
(p^1_x)_\sharp \big[\pi^- \llcorner \big( \mathcal G^>_{\bar x, l} \times ( \Gamma \times (0,1) \times B_R \times [0,M]\big) \big].
\end{equation}
From Corollary \ref{C_no_crossing} it follows that for $\omega_{h}$-a.e. $(\gamma,t^-_\gamma,t^+_\gamma) \in \Gamma_h$ it holds 
\begin{equation*}
\gamma_x(t)\cdot e_l^\perp > \bar x \cdot e_l^\perp \quad \mbox{and} \quad \gamma_x(t)\cdot e_l\ge f_{\bar x,l}(\gamma_x(t)\cdot e_l^\perp) \qquad \forall t \in I_{\bar x,l,\gamma},
\end{equation*}
therefore
\begin{equation}\label{E_null+}
(p^1_x)_\sharp \pi^-_{\bar x,l}\left( \left\{ x \in B_R: x\cdot e_l^\perp \le \bar x \cdot e_l^\perp\right\} \cup \left\{ x\in B_R: x\cdot e_l^\perp > \bar x \cdot e_l^\perp\mbox{ and } x\cdot e_l< f_{\bar x,l}(x\cdot e_l^\perp)\right\}\right)=0.
\end{equation}
%
%\begin{equation*}
%(p^1_x)_\sharp \pi^-_{\bar x,l} \big( \{x\in B_R : x\cdot e_l \ge \bar x \cdot e_l \mbox{ and }x\cdot e_l^\perp < f_{\bar x,l}(x\cdot e_l) \}\big) = 0.
%\end{equation*}
%
%In the same way we get
%\begin{equation}\label{E_null-}
%(p^2_x)_\sharp \pi^-_{\bar x,l} \big( \{x\in B_R : x\cdot e_l \ge \bar x \cdot e_l \mbox{ and }x\cdot e_l^\perp > f_{\bar x,l}(x\cdot e_l) \}\big) = 0,
%\end{equation}
In the same way we get 
\begin{equation}\label{E_null-}
(p^2_x)_\sharp \pi^-_{\bar x,l}\left(  \left\{ x \in B_R: x\cdot e_l^\perp \le \bar x \cdot e_l^\perp\right\} \cup \left\{ x\in B_R: x\cdot e_l^\perp > \bar x \cdot e_l^\perp\mbox{ and } x\cdot e_l> f_{\bar x,l}(x\cdot e_l^\perp)\right\}\right)=0,
\end{equation}
where
\begin{equation*}
\begin{split}
p^2_x: ( \Gamma\times (0,1) \times B_R \times [0,M])^2 & \to B_R \\
(\gamma,t^-_\gamma,t^+_\gamma,t,x,a,\gamma',{t^-_\gamma}',{t^+_\gamma}', t',x',a') & \mapsto x'.
\end{split}
\end{equation*}.

Finally, since $\pi^-$ is concentrated on $\G$, then
\begin{equation*}
(p^1_{x} \otimes p^2_{x})_\sharp \pi^- \in \M(([0,T]\times \R)^2)
\end{equation*}
is concentrated on the graph of the identity on $B_R$ and in particular $(p^1_{x})_\sharp \pi^-_{\bar x,l}= (p^2_{x})_\sharp \pi^-_{\bar x,l}$.
Therefore it follows from \eqref{E_null+} and \eqref{E_null-} that $(p^1_{x})_\sharp \pi^-_{ \bar x,l}$ is concentrated on
\begin{equation*}
\left\{x\in B_R:  x\cdot e_l > \bar x \cdot e_l \mbox{ and }x\cdot e_l^\perp = f_{\bar x,l}(x\cdot e_l) \right\}= C_{f_{\bar x,l}}.
\end{equation*}
Step 2. We prove that for $\pi^-_l$-a.e. $\mathcal Z =  (\gamma,t^-_\gamma,t^+_\gamma,t,x,a,\gamma',{t^-_\gamma}',{t^+_\gamma}',t',x',a') \in ( \Gamma\times (0,1) \times B_R \times [0,M])^2$ there exists $\delta >0$ such that for every $s \in (t-\delta,t)$ and $s'\in (t'-\delta,t')$ the following properties hold:
\begin{enumerate}
%\item $\bar x \cdot e_l^\perp < \gamma_x(s) \cdot e_l^\perp$;
\item $\gamma_a(s)\in I_l$ and $\gamma_a(s) > a$;
%\item $\bar x \cdot e_l^\perp < \gamma'_x(s') \cdot e_l^\perp$;
\item $\gamma'_a(s')\in I_l$ and $\gamma'_a(s') < a'$.
\end{enumerate}
It is sufficient to prove the properties in (1), being the ones in (2) analogous. 
The statement is trivial for elements $\mathcal Z$ for which $\gamma_a(t-)>a$ and it follows immediately by Lemma \ref{L_BV} applied to $\gamma_a$ if $\gamma_a$ is continuous at $t$. Being $\pi^-$ concentrated on points $\mathcal Z$ for which $\gamma_a(t+)\le \gamma_a(t-)$ it is therefore sufficient to check that
\begin{equation*}
\pi^-_l \left( \left\{ \mathcal Z \in ( \Gamma\times (0,1) \times B_R \times [0,M])^2 : \gamma_a(t-)=a>\gamma_a(t+) \right\} \right)=0.
\end{equation*}
This follows immediately from the fact that for $\omega_h$-a.e. $\gamma$ the measure $\mu^-_\gamma$ has no atoms and the set
$(t,x,a) \in (0,1)\times B_R\times [0,M]$ for which $\gamma_x(t)=x$ and $\gamma_a(t-)=a>\gamma_a(t+)$ is at most countable.
\newline
Step 3. We prove that for $\pi^-_l$-a.e. $\mathcal Z \in ( \Gamma\times (0,1) \times B_R \times [0,M])^2$ there exists $\bar x \in \Q^2 \cap B_R$ such that 
\begin{equation*}
\mathcal Z \in \mathcal G^>_{\bar x,l} \times \mathcal G^<_{\bar x,l}.
\end{equation*}
Let us consider $\delta>0$ from Step 2. From Property (1) and \eqref{E_characteristic} it follows that for every $s \in (t-\delta,t)$ it holds
\begin{equation}\label{E_gamma}
 \gamma_x(s)\cdot e_l > \gamma_x(t)\cdot e_l - ie^{ia}\cdot e_l (\gamma_x(t)\cdot e_l^\perp- \gamma_x(s)\cdot e_l^\perp)
\end{equation}
and similarly for every $s'\in (t'-\delta,t')$
\begin{equation}\label{E_gamma'}
 \gamma'_x(s')\cdot e_l < \gamma'_x(t')\cdot e_l - ie^{ia'}\cdot e_l (\gamma'_x(t')\cdot e_l^\perp- \gamma'_x(s')\cdot e_l^\perp).
\end{equation}
Being $\pi^-$ concentrated on $\G$, for $\pi^-_l$-a.e. $\mathcal Z \in ( \Gamma\times (0,1) \times B_R \times [0,M])^2$ it also holds
$a=a'$ and $\gamma_x(t)=x=\gamma'_x(t')$. 
Let us consider 
\begin{equation*}
y\in \left(\gamma_x(t)\cdot e_l^\perp-\frac{\delta}{100}, \gamma_x(t)\cdot e_l^\perp\right) \cap \sqrt 2\Q.
\end{equation*}
Then there exist $s\in (t-\delta,t)$ and $s'\in (t'-\delta,t')$ such that $\gamma_x(s)\cdot e_l^\perp = y = \gamma'_x(s')\cdot e_l^\perp$.
It follows from \eqref{E_gamma} and \eqref{E_gamma'} that 
\begin{equation*}
\begin{split}
\gamma'_x(s')\cdot e_l < &~ \gamma'_x(t')\cdot e_l - ie^{ia'}\cdot e_l (\gamma'_x(t')\cdot e_l^\perp- \gamma'_x(s')\cdot e_l^\perp) \\
=&~ x \cdot e_l - ie^{ia'}\cdot e_l (x\cdot e_l^\perp - y) \\
= &~ \gamma_x(t)\cdot e_l - ie^{ia}\cdot e_l (\gamma_x(t)\cdot e_l^\perp- \gamma_x(s)\cdot e_l^\perp) \\
< &~ \gamma_x(s)\cdot e_l.
\end{split}
\end{equation*}
Let $z \in (\gamma'_x(s')\cdot e_l ,  \gamma_x(s)\cdot e_l) \cap \sqrt 2 \Q$ and set $\bar x = ze_l + y e_l^\perp$.
By construction it holds
\begin{equation*}
\mathcal Z \in \mathcal G^>_{\bar x,l} \times \mathcal G^<_{\bar x,l}
\end{equation*}
and since $e_l, e_l^\perp \in (\sqrt 2 \Q)^2$, then $\bar x \in \Q^2$.
\newline
Step 4. It follows by Step 3 that
\begin{equation}\label{E_inclusion}
\pi^-_l \le \pi^-_\llcorner \left( \bigcup_{\bar x \in \Q^2 \cap B_R}  \mathcal G^>_{\bar x,l} \times \mathcal G^<_{\bar x,l} \right).
\end{equation}
Since by Step 1 we have that $(p^1_x)_\sharp \pi^-_{\bar x,l}$ is concentrated on $C_{f_{\bar x,l}}$, then the statement of the proposition follows from \eqref{E_inclusion}.
\end{proof}

\subsection{Rectifiability of $\nu^-_{\mathrm{jump}}$}\label{Ss_nu_j}
In the next lemma we prove a regularity density estimate at a point $\bar x$ provided that the entropy dissipation measure decays faster than in a shock point. 
\begin{lemma}\label{L_density}
Let $(\gamma, t^-_\gamma,t^+_\gamma) \in  \Gamma_h$, $\bar t \in (t^-_\gamma,t^+_\gamma)$ and set $\bar x = \gamma_x(\bar t)$ and $\bar a = \gamma_a(\bar t-) \vee \gamma_a(\bar t+)$. Then there exists an absolute constant $c>0$ such that for every $\delta \in (0,\pi/2)$ at least one of the following holds true:
\begin{enumerate}
\item \begin{equation*}
\liminf_{r\to 0} \frac{\{x \in B_r(\bar x): \phi(x) \ge \bar a -\delta\}}{r^2}\ge c\delta;
\end{equation*}
\item
\begin{equation}\label{E_diss_shock}
\liminf_{r\to 0}\frac{\nu(B_r(\bar x))}{r}\ge c \delta^3.
\end{equation}
\end{enumerate}
\end{lemma}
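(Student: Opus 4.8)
The plan is to argue by contradiction, assuming that neither (1) nor (2) holds, and to exploit that failure of (2) furnishes arbitrarily small scales $r$ with $\nu(B_{r}(\bar x))<c\delta^{3}r$, at which the ``regular'' picture is forced. After reducing to the case $\bar a=\gamma_a(\bar t-)$ (the case $\bar a=\gamma_a(\bar t+)$ being treated identically by running the same argument from the other side of $\bar t$), the first step is a localization. Since $\gamma_a\in\BV$, its left limit at $\bar t$ is exactly $\bar a$, so for a small parameter $\eta\in(0,\delta/20)$ there is $\sigma>0$ with $|\gamma_a(t)-\bar a|<\eta$ for all $t\in(\bar t-\sigma,\bar t)$. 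By the characteristic equation \eqref{E_characteristic}, for every $r<\sigma$ the arc $\gamma_x|_{(\bar t-r,\bar t)}$ stays within $r\eta$ of the segment $\{\bar x-s\,ie^{i\bar a}:s\in(0,r)\}$, it joins a point of $\partial B_r(\bar x)$ to $\bar x$, and, since $\gamma\in\Gamma_h$, one has $\phi(\gamma_x(t))>\gamma_a(t)>\bar a-\eta$ for a.e.\ $t$ along it; in particular $\chi(\cdot,\bar a-\delta)\equiv 1$ on this incoming arc.

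\textbf{The dichotomy.} Near $\bar x$, the function $\phi$ must pass from values $\geq\bar a-\eta$ (held along the incoming arc) to values $<\bar a-\delta$, the latter filling most of $B_r(\bar x)$ precisely because (1) fails. I would realize this transition through a fan of hypograph characteristics carrying levels $b$ in the window $(\bar a-\delta,\bar a-\eta)$: using the representation formula \eqref{E_repr_formula} one finds, for a.e.\ $b$ in this window, a curve in $\Gamma_h$ passing at time $t_*:=\bar t-r/2$ through $(\gamma_x(t_*),b)$, since $b<\phi(\gamma_x(t_*))$. If a definite fraction of these characteristics keeps its vertical component within, say, $\eta$ of $b$ until it has travelled distance $\sim r$ past $\bar x$, then by \eqref{E_characteristic} their traces sweep an angular sector of opening $\sim\delta$ issuing from $\bar x$, hence a set of measure $\gtrsim\delta r^{2}$ contained in $\{\phi\geq\bar a-\delta\}\cap B_{Cr}(\bar x)$ --- which is (1), with a universal constant, and (by monotonicity in the scale) at all smaller radii as well, hence as a genuine $\liminf$. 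In the opposite case the fan must lose a fraction $\gtrsim\delta$ of its value inside $B_{Cr}(\bar x)$; the point is then that the no-crossing property of Corollary \ref{C_no_crossing} prevents this loss from happening ``for free'': a hypograph characteristic whose level drops below $\bar a-\delta$ is pushed against the Lipschitz barrier $f_{\bar x,l}$ for a suitable $l$ with $\bar a\in I_l$, so the loss is coherent and is recorded, via the decomposition $\int_{\Gamma}|\mu_\gamma|\,d\omega_h=\L^{1}\times|U_\phi|$ of \eqref{E_dec_2} and the fact that a drop of size $s$ of $\gamma_a$ contributes $|\mu_\gamma|$-mass $\geq s$ localized where it occurs, as a concentration of $|U_\phi|$ around $\bar x$ of extent $\gtrsim r$ whose density is cubic in $\delta$ --- the cubic exponent being the entropy-dissipation cost of a shock of strength $\delta$, tied to the Rankine--Hugoniot relation $\mathbf n_J\cdot(e^{i\phi^{+}}-e^{i\phi^{-}})=0$ from Theorem \ref{T_intro}. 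This gives $\nu(B_{Cr}(\bar x))\gtrsim\delta^{3}r$, contradicting the failure of (2) once $r$ is chosen among the small scales where $\nu(B_{Cr}(\bar x))<c\delta^{3}r$; hence (1) holds.

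\textbf{Main obstacle.} The delicate point is the second paragraph: producing the \emph{sharp} cubic exponent rather than a larger power of $\delta$, and carefully separating the rarefaction-like transition (which yields (1)) from the shock-like one (which yields (2)). This amounts to a new quantitative regularity estimate that has to combine the no-crossing property of Corollary \ref{C_no_crossing}, the $L^{1}$-transport estimate of Corollary \ref{C_T} underlying \eqref{E_dec_2}, and the structure of the jump set in Theorem \ref{T_intro}; getting the powers of $\delta$ to match, and controlling the $\omega_h$-mass of the relevant bundle of characteristics, is where the real work lies. Secondary care is needed in the orientation/jump case analysis at $\bar t$ and in the passage from ``fan survives at scale $r$'' to the corresponding lower density bound at all scales $<r$.
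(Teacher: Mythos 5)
Your high-level picture is right --- a dichotomy between a rarefaction-like transition (giving~(1)) and a shock-like one (giving~(2)), with the cubic power arising from a quantity of order $\delta^{2}r$ combined with a per-curve total-variation drop of order $\delta$, recorded through $\int_{\Gamma}|\mu_\gamma|\,d\omega_h=\L^1\times|U_\phi|$ --- but the quantitative engine that makes this work is precisely the step you leave open, and the tools you reach for to supply it are not the ones that do the job. First, the ``fan'' issuing from the single point $(\gamma_x(t_*),b)$ as $b$ varies lives over a set of $\L^{3}$-measure zero, so the representation formula \eqref{E_repr_formula} gives no information about it; one has to integrate over a thick tube $S_{\e,\bar r}:=\gamma_x((t_r,\bar t))+B_\e(0)$ around the incoming arc, on which $\L^2(\{\phi\ge\bar a-\delta/5\})\ge\e\,\bar r$ because $\gamma\in\Gamma_h$. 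Second, and more importantly, the lower bound of order $\delta^{2}r$ on the $\omega_h$-mass of the relevant bundle does not come from the no-crossing barriers of Corollary~\ref{C_no_crossing}, nor from the Rankine--Hugoniot relation; neither plays any role here. It comes from a transversality estimate: one restricts each curve to the connected components of $\gamma_a^{-1}\big((\bar a-\delta,\bar a-\tfrac25\delta)\big)$ that meet $S_{\e,r}\times(\bar a-\tfrac45\delta,\bar a-\tfrac35\delta)$, and observes that on this level window the velocity $ie^{i\gamma_a}$ makes an angle of order $\delta$ with the tube direction, so each restricted piece spends time at most $\tilde c\,\e/\delta$ inside the tube. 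Combined with \eqref{E_repr_formula} this forces the restricted bundle $\tilde\omega_h$ to have mass at least $\tfrac{\delta^{2}}{5\tilde c}\,r$.

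The dichotomy is then run on the restricted pieces themselves, split by whether their time-length is $\ge r$ or $<r$. Short pieces stay inside $B_{2r}(\bar x)$ and each carries $\TV\,\gamma_a\ge\tfrac25\delta$, so the decomposition \eqref{E_dec_2} turns $\tilde\omega_h(\Gamma_2)\gtrsim\delta^{2}r$ directly into $\nu(B_{2r}(\bar x))\gtrsim\delta^{3}r$; long pieces, via $(e_t)_\sharp\tilde\omega_h\le\chi\L^{3}$ and Fubini, produce a set of area $\gtrsim\delta r^{2}$ inside $\{\phi\ge\bar a-\delta\}\cap B_{2r}(\bar x)$. There is no contradiction argument and no appeal to the barriers $f_{\bar x,l}$ or the jump-set structure: the proof is a direct counting on the restricted characteristics. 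Your ``main obstacle'' paragraph correctly locates where the real work lies, but the transversality/restriction construction --- not a no-crossing or Rankine--Hugoniot input --- is the missing idea that carries that load.
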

\begin{proof}
We assume without loss of generality that $\bar a = \gamma_a(\bar t-)$ and we let $\delta_1>0$ be such that for every 
$t \in (\bar t - \delta_1, \bar t)$ it holds $\gamma_a(t) \in (\bar a -\delta/5, \bar a + \delta/5)$. We moreover set $\bar r = \delta_1/2$
so that for every $r \in (0,\bar r)$ there exists $t_r \in (\bar t - \delta_1,\bar t)$ such that $\gamma_x(t_r) \in \partial B_r(\bar x)$ and
$\gamma_x(t)\in B_r(\bar x)$ for every $t\in (t_r,\bar t)$.
Since $\gamma \in  \Gamma_h$ and $\gamma_a (t)\ge \bar a - \delta/5$ for every $t \in (t_r,\bar t)$, then there exists $\e>0$ such that 
\begin{equation*}
\L^2(\{ x \in S_{\e,\bar r} : \phi(x) \ge \bar a -\delta/5\}) \ge \e \bar r, \qquad \mbox{where} \qquad S_{\e,\bar r}:=\gamma_x((t_r,\bar t)) + B_\e(0).
\end{equation*}

For every $(\gamma,t^-_\gamma,t^+_\gamma) \in  \Gamma$ we consider the nontrivial interiors
$(t^-_{\gamma,i},t^+_{\gamma,i})_{i=1}^{N_{\gamma}}$ of the  connected components of $\gamma_a^{-1}((\bar a - \delta), \bar a - \frac{2}{5}\delta)$ which intersect $\gamma^{-1}(S_{\e,r}\times (\bar a - \frac{4}{5}\delta, \bar a - \frac{3}{5}\delta))$.
Notice that we have the estimate
\begin{equation*}
N_\gamma \le 1 + \frac{5}{\delta}\TV \gamma_a.
\end{equation*}

For every $i\in \N$ we consider
\begin{equation*}
 \Gamma_i:= \{(\gamma, t^-_\gamma,t^+_\gamma)\in  \Gamma : N_{\gamma}\ge i\}
\end{equation*}
and the measurable restriction map
\begin{equation*}
\begin{split}
R_{i}:  \Gamma_{i} & \to  \Gamma \\
(\gamma,t^-_\gamma,t^+_\gamma) & \mapsto (\gamma, t^-_{\gamma,i},t^+_{\gamma,i})
\end{split}
\end{equation*}
We finally consider the measure 
\begin{equation*}
\tilde \omega_{h}:= \sum_{i=1}^\infty (R_{i})_\sharp \left(\omega_h\llcorner  \Gamma_i\right).
\end{equation*}
Notice that $\tilde \omega_{h}\in \M_+( \Gamma)$ since for every $N>0$
\begin{equation*}
\left\| \sum_{i=1}^N (R_{i})_\sharp \left(\omega_h\llcorner  \Gamma_i\right) \right\| \le \int_{ \Gamma} N_{\gamma} d\omega_h \le  \int_{ \Gamma} \left(1 + \frac{5}{\delta}\TV \gamma_a\right) d\omega_h(\gamma) < \infty.
\end{equation*}
The advantage of using the restrictions introduced above is in the following estimate:
by an elementary transversality argument there exists an absolute constant $\tilde c>0$ such that for $\tilde \omega_h$-a.e. 
$(\gamma,t^-_\gamma,t^+_\gamma) \in  \Gamma$ it holds
\begin{equation}\label{E_trans}
\L^1\left(\left\{t \in (t^-_\gamma,t^+_\gamma) \in  \Gamma: \gamma(t) \in S_{\e,r}\times \left(\bar a - \frac{4}{5}\delta, \bar a - \frac{3}{5}\delta\right)\right\} \right) \le \tilde c \frac{\e}{\delta}.
\end{equation}
By construction we have that for every $t\in (0,1)$ it holds 
\begin{equation}\label{E_trans2}
( e_t)_\sharp \tilde \omega_h \ge \L^3 \llcorner \left\{(x,a) \in S_{\e,r}\times \left(\bar a - \frac{4}{5}\delta, \bar a - \frac{3}{5}\delta\right): \phi(x) \ge a \right\}.
\end{equation}
Since the measure of this set is at least $\e r \delta/5$, then it follows by \eqref{E_trans} and \eqref{E_trans2} that   
\begin{equation}\label{E_lower}
\tilde \omega_h ( \Gamma) \ge  \e r \frac{\delta}{5}\cdot \frac{\delta}{\tilde c \e}= \frac{\delta^2}{5 \tilde c}r.
\end{equation}
We consider $ \Gamma =  \Gamma_1 \cup  \Gamma_2$, where 
\begin{equation*}
  \Gamma_1 : = \{ (\gamma,t^-_\gamma,t^+_\gamma) \in  \Gamma : t^+_\gamma - t^-_\gamma \ge r\} \quad \mbox{and} \quad  
  \Gamma_2 : = \{ (\gamma,t^-_\gamma,t^+_\gamma) \in  \Gamma : t^+_\gamma - t^-_\gamma < r\}.
\end{equation*}

For $\tilde \omega_h$-a.e. $(\gamma,t^-_\gamma,t^+_\gamma) \in  \Gamma_1$ it holds
\begin{equation*}
\L^1\left( \left\{t \in (t^-_\gamma, t^+_\gamma): \gamma (t) \in B_{2r}(\bar x) \times \left(\bar a - \delta, \bar a - \frac{2}{5}\delta \right) \right\} \right) \ge r,
\end{equation*}
while for  $\tilde \omega_h$-a.e. $(\gamma,t^-_\gamma,t^+_\gamma) \in  \Gamma_2$ we have
\begin{equation*}
\gamma_x (t^-_\gamma,t^+_\gamma) \subset B_{2r}(\bar x) \qquad \mbox{and} \qquad \TV \gamma_a \ge \frac{2}{5}\delta.
\end{equation*}
It follows from \eqref{E_lower} that at least one of the following holds:
\begin{equation}\label{E_dicothomy}
\tilde \omega_h ( \Gamma_1) \ge  \frac{\delta^2}{10 \tilde c}r \qquad \mbox{or} \qquad \tilde \omega_h ( \Gamma_2) \ge  \frac{\delta^2}{10 \tilde c}r.
\end{equation}
If the second condition holds then we have that 
\begin{equation*}
\nu (B_{2r}(\bar x)) \ge |U_\phi|(B_{2r}(\bar x)\times (\bar a -\delta, \bar a)) \ge \frac{\delta^3}{25 \tilde c}r
\end{equation*}
so that the second condition in the statement is satisfied.
Otherwise we assume that the first condition in \eqref{E_dicothomy} holds:
since for every $t\in (0,1)$ 
\begin{equation*}
( e_t)_\sharp \tilde \omega_h \le \chi \L^3
\end{equation*}
it follows from \eqref{E_dicothomy} and Fubini theorem that 
\begin{equation*}
\L^2\left(\left\{x \in B_{2r}(\bar x): \phi (x)\ge \bar a -\delta\right\}\right) \ge \frac{\delta^2}{10 \tilde c}r \cdot \frac{5r}{3\delta} = \frac{\delta}{6\tilde c}r^2
\end{equation*}
so that the first condition in the statement holds true.
\end{proof}
\begin{remark}
We observe that the third power in  \eqref{E_diss_shock} is optimal; this is related to the fact that the optimal regularity of $\phi$ is 
$B^{1/3,3}_{\infty,loc}(\Omega)$, see \cite{GL_eikonal}.
\end{remark}

We also state the same result for curves in $ \Gamma_e$, whose proof is analogous to the one of Lemma \ref{L_density}.
\begin{lemma}\label{L_density2}
Let $(\gamma, t^-_\gamma,t^+_\gamma) \in  \Gamma_e$, $t \in (t^-_\gamma,t^+_\gamma)$ and set $\bar x = \gamma_x(t)$ and $\bar a = \gamma_a(t-) \wedge \gamma_a(t+)$. Then there exists an absolute constant $c>0$ such that for every $\delta \in (0,\pi/2)$ at least one of the following holds true:
\begin{enumerate}
\item \begin{equation*}
\liminf_{r\to 0} \frac{\{x \in B_r(\bar x): \phi(x) \le \bar a + \delta\}}{r^2}\ge c\delta;
\end{equation*}
\item
\begin{equation*}
\liminf_{r\to 0}\frac{\nu(B_r(\bar x))}{r}\ge c \delta^3.
\end{equation*}
\end{enumerate}
\end{lemma}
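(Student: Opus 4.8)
The plan is to repeat the proof of Lemma~\ref{L_density} \emph{verbatim}, exchanging the roles of hypograph and epigraph: one fixes a Lagrangian representation $\omega_e$ of the epigraph of $\phi$ on $B_R$ (available by Theorem~\ref{T_Lagrangian}), uses that for $\omega_e$-a.e.\ $\gamma\in\Gamma_e$ one has $\gamma_a(s)>\phi(\gamma_x(s))$ for $\L^1$-a.e.\ $s$, and reverses every inequality involving the vertical variable $a$. The three structural facts exploited in Lemma~\ref{L_density} — the integrability bound \eqref{E_reg}, the representation identity \eqref{E_repr_e}, and the mass identity \eqref{E_dec_2} — all hold for $\omega_e$ with the same constants, so no new estimate is required; only signs change.

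First I would set up the local picture, assuming without loss of generality $\bar a=\gamma_a(t-)$: pick $\delta_1>0$ with $\gamma_a(s)\in(\bar a-\delta/5,\bar a+\delta/5)$ for $s\in(t-\delta_1,t)$, set $\bar r=\delta_1/2$, and for $r\in(0,\bar r)$ let $t_r\in(t-\delta_1,t)$ be the last time at which $\gamma_x$ meets $\partial B_r(\bar x)$, so that $\gamma_x((t_r,t))\subset B_r(\bar x)$. Since $\gamma\in\Gamma_e$ and $\gamma_a\le\bar a+\delta/5$ on $(t_r,t)$, one obtains $\e>0$ with $\L^2(\{x\in S_{\e,\bar r}:\phi(x)\le\bar a+\delta/5\})\ge\e\bar r$, where $S_{\e,\bar r}:=\gamma_x((t_r,t))+B_\e(0)$; this is the only place the hypothesis $\gamma\in\Gamma_e$ is used, and the only inequality that flips.

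Then I would run the restriction-and-transversality step on the connected components of $\gamma_a^{-1}((\bar a+\tfrac25\delta,\bar a+\delta))$ meeting $\gamma^{-1}(S_{\e,r}\times(\bar a+\tfrac35\delta,\bar a+\tfrac45\delta))$ — their number is $\le 1+\tfrac5\delta\TV\gamma_a$, the resulting restriction measure $\tilde\omega_e$ is finite by \eqref{E_reg}, the transversality bound gives time $\le\tilde c\,\e/\delta$ spent by each restricted curve in $S_{\e,r}\times(\bar a+\tfrac35\delta,\bar a+\tfrac45\delta)$, and \eqref{E_repr_e} with the choice of $S_{\e,r}$ gives $(e_s)_\sharp\tilde\omega_e\ge\L^3\llcorner\{(x,a)\in S_{\e,r}\times(\bar a+\tfrac35\delta,\bar a+\tfrac45\delta):a\ge\phi(x)\}$, a set of measure $\ge\e r\delta/5$; hence $\tilde\omega_e(\Gamma)\ge\delta^2 r/(5\tilde c)$. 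Splitting $\Gamma=\Gamma_1\cup\Gamma_2$ according to whether a restricted component has duration $\ge r$ or $<r$, one of the two pieces carries $\tilde\omega_e$-mass $\ge\delta^2 r/(10\tilde c)$. If it is $\Gamma_2$, each such curve stays in $B_{2r}(\bar x)$ with $\TV\gamma_a\ge\tfrac25\delta$, so \eqref{E_dec_2} for $\omega_e$ gives $\nu(B_{2r}(\bar x))\ge|U_\phi|(B_{2r}(\bar x)\times(\bar a,\bar a+\delta))\ge\delta^3 r/(25\tilde c)$, i.e.\ alternative~(2) after letting $r\to0$ and rescaling the radius. If it is $\Gamma_1$, each such curve spends time $\ge r$ in $B_{2r}(\bar x)\times(\bar a+\tfrac25\delta,\bar a+\delta)$ by the unit characteristic speed (exactly as in Lemma~\ref{L_density}), and since $(e_s)_\sharp\tilde\omega_e\le\L^3\llcorner E_\phi$, Fubini's theorem gives $\L^2(\{x\in B_{2r}(\bar x):\phi(x)\le\bar a+\delta\})\ge\delta r^2/(6\tilde c)$, i.e.\ alternative~(1).

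I do not expect a real obstacle: the whole scheme is symmetric under $a\mapsto-a$ because \eqref{E_reg}, \eqref{E_repr_e} and \eqref{E_dec_2} are, and the single asymmetric input — the defining inequality of $\Gamma_e$ versus $\Gamma_h$ — enters only in the second step, where $\gamma_a>\phi\circ\gamma_x$ plays precisely the role that $\gamma_a<\phi\circ\gamma_x$ plays there. The only mildly delicate point, as already in Lemma~\ref{L_density}, is keeping track of the nested fractions $\tfrac15,\tfrac25,\tfrac35,\tfrac45$ of $\delta$ so that the transversality estimate and the volume lower bound on the relevant slab of $E_\phi$ are simultaneously in force; this is routine bookkeeping.
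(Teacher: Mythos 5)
Your proposal is correct and coincides with the argument the paper intends: the paper explicitly remarks that the proof of Lemma~\ref{L_density2} is ``analogous to the one of Lemma~\ref{L_density},'' and your write-up is precisely that mirror-image argument, reversing only the inequalities on the vertical variable $a$ (using $\gamma_a>\phi\circ\gamma_x$ for $\Gamma_e$ in place of $\gamma_a<\phi\circ\gamma_x$, the slab $(\bar a+\tfrac25\delta,\bar a+\delta)$ in place of $(\bar a-\delta,\bar a-\tfrac25\delta)$, and $(e_t)_\sharp\omega_e=\L^3\llcorner E_\phi$ in place of $(e_t)_\sharp\omega_h=\L^3\llcorner H_\phi$). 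The bookkeeping of the $\tfrac15,\tfrac25,\tfrac35,\tfrac45$ fractions of $\delta$ and the transversality/dichotomy steps are all carried over correctly.
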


The main result of this section is the following:
\begin{proposition}\label{P_j}
For $\nu^-_{\mathrm{jump}}$-a.e. $x \in B_R$ 
\begin{equation}\label{E_beinJ}
\limsup_{r\to 0} \frac{\nu (B_r(x))}{r}>0.
\end{equation}
\end{proposition}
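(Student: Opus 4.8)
The plan is to show that the set $N:=\{x\in B_R:\ \limsup_{r\to 0}\nu(B_r(x))/r=0\}$ is $\nu^-_{\mathrm{jump}}$-negligible. Since $\nu^-_{\mathrm{jump}}=(p^1_x)_\sharp\pi^-_{\mathrm{jump}}$ and $\pi^-_{\mathrm{jump}}\le\pi^-\llcorner(\G^-_{h,\mathrm{jump}}\times\G^+_e)+\pi^-\llcorner(\G^-_h\times\G^+_{e,\mathrm{jump}})$, it is enough to check that neither of the latter two measures charges $(p^1_x)^{-1}(N)$; the two cases being symmetric, I only discuss $\pi^-\llcorner(\G^-_{h,\mathrm{jump}}\times\G^+_e)$. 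The first step would be to record the structure of a typical point: since $\pi^-$ is concentrated on $\G$ (Lemma \ref{L_pairing}) and on pairs of curves in $\Gamma_h\times\Gamma_e$, and since the downward-jump part of $\mu^-_\gamma$ is carried by the \emph{open} vertical segments $E^-_\gamma$, for $\pi^-\llcorner(\G^-_{h,\mathrm{jump}}\times\G^+_e)$-a.e.\ point the hypograph curve $\gamma\in\Gamma_h$ and the epigraph curve $\gamma'\in\Gamma_e$ pass through a common $(\bar x,a)\in B_R\times[0,M]$ at a common time $\bar t$, with $\gamma_a(\bar t+)<a<\gamma_a(\bar t-)=:\bar a$ and $\underline b:=\gamma'_a(\bar t-)\wedge\gamma'_a(\bar t+)\le a$; in particular $\underline b\le a<\bar a$, with a \emph{strict} gap $\bar a-a>0$.

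Next I would fix $\delta_*:=\min(\tfrac{\bar a-a}{5},\tfrac{\pi}{4})\in(0,\pi/2)$ and apply Lemma \ref{L_density} to $(\gamma,\bar t)$ and Lemma \ref{L_density2} to $(\gamma',\bar t)$, in both cases with $\delta=\delta_*$. If alternative (2) of either lemma holds at $\bar x$, then $\liminf_{r\to 0}\nu(B_r(\bar x))/r\ge c\,\delta_*^{3}>0$, so $\bar x\notin N$ and that point is harmless. Otherwise alternative (1) holds in both, i.e.\ the slabs $\{\phi\ge\bar a-\delta_*\}$ and $\{\phi\le\underline b+\delta_*\}$ each have lower Lebesgue density at least $c\,\delta_*$ at $\bar x$. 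Since $\bar a-\underline b\ge\bar a-a\ge 5\delta_*$ (in both cases defining $\delta_*$), these two slabs are disjoint and separated by a gap of size at least $3\delta_*$, so for all sufficiently small $r$ an elementary estimate — splitting according to whether $\phi_r(\bar x)$ lies closer to the high slab or to the low one — gives $r^{-2}\int_{B_r(\bar x)}|\phi-\phi_r(\bar x)|\ge c'$ for some $c'>0$ depending only on $\bar a-a$. Hence $\liminf_{r\to 0}r^{-2}\int_{B_r(\bar x)}|\phi-\phi_r(\bar x)|>0$, so $\bar x$ is not a vanishing mean oscillation point of $\phi$; by Theorem \ref{T_intro}(2) this forces $\bar x\in\Sigma$, contradicting $\bar x\in N$. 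Thus $\bar x\notin N$ for $\pi^-\llcorner(\G^-_{h,\mathrm{jump}}\times\G^+_e)$-a.e.\ point. The component $\pi^-\llcorner(\G^-_h\times\G^+_{e,\mathrm{jump}})$ is treated identically, exchanging the roles of $\gamma$ and $\gamma'$ and of Lemmas \ref{L_density} and \ref{L_density2}. Altogether $\pi^-_{\mathrm{jump}}$ does not charge $(p^1_x)^{-1}(N)$, which is exactly the claim \eqref{E_beinJ}.

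I expect the delicate point to be the very first one: producing, for a $\pi^-_{\mathrm{jump}}$-typical pair, a hypograph curve and an epigraph curve through the \emph{same} point carrying a definite \emph{strict} separation of levels. Strictness comes from the jump part of $\mu^-_\gamma$ living on the \emph{open} segments $E^-_\gamma$, so that $a$ never equals the upper endpoint $\gamma_a(\bar t-)$; this has to be combined carefully with the concentration of $\pi^-$ on $\G$. Once this is in place, Lemmas \ref{L_density}--\ref{L_density2} furnish two one-sided density lower bounds at $\bar x$ that are incompatible with the VMO property of Theorem \ref{T_intro}(2), and the remaining verifications (the choice of $\delta_*$, the elementary oscillation estimate) are routine.
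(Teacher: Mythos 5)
Your proposal is correct and follows essentially the same argument as the paper's: extract from $\pi^-_{\mathrm{jump}}$ a hypograph curve and an epigraph curve through $\bar x$ at a common time with a strict gap in levels, apply Lemmas \ref{L_density} and \ref{L_density2}, and when the dissipation alternative fails in both, use the two disjoint slabs with positive lower density to exclude the vanishing mean oscillation property at $\bar x$ and invoke Theorem \ref{T_intro}(2). The only (minor) refinement is your choice $\delta_*=\min\bigl(\tfrac{\bar a-a}{5},\tfrac{\pi}{4}\bigr)$, which guarantees $\delta_*\in(0,\pi/2)$ as required in both lemmas, whereas the paper's $\delta=(\gamma_a(t-)-a)/3$ could in principle exceed $\pi/2$ when the jump is large.
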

\begin{proof}
For $\nu^-_{\mathrm{jump}}$-a.e. $\bar x \in B_R$ one of the following holds:
\begin{enumerate}
\item there exist $(\gamma,t^-_\gamma,t^+_\gamma, t,x,a) \in \G^-_{h, \mathrm{jump}}$ and  $(\gamma',{t^-_\gamma}',{t^+_\gamma}', t',x',a') \in  \Gamma_e$ such that 
\begin{equation*}
x= x'= \bar x \qquad  \mbox{and} \qquad \gamma'_a(t'+)\le a'=a < \gamma_a(t-).
\end{equation*}
\item there exist $(\gamma',{t^-_\gamma}',{t^+_\gamma}', t',x',a') \in \G^+_{e, \mathrm{jump}}$ and $(\gamma,t^-_\gamma,t^+_\gamma, t,x,a) \in  \Gamma_e$ such that 
\begin{equation*}
x= x'= \bar x \qquad  \mbox{and} \qquad \gamma'_a(t'+)< a'=a \le \gamma_a(t-).
\end{equation*}
\end{enumerate}
Being the two cases equivalent we consider only the first one.
We apply Lemma \ref{L_density} to the curve $\gamma$ and Lemma \ref{L_density2} to the curve $\gamma'$ with 
$\delta =(\gamma_a(t-) - a )/3$. 
If condition (2) holds in at least one of the two cases then the statement follows, otherwise both the following inequalities are satisfied:
\begin{equation*}
\liminf_{r\to 0} \frac{\{x \in B_r(\bar x): \phi(x) \ge \gamma_a(t-) - \delta\}}{r^2}\ge c\delta^2, \qquad 
\liminf_{r\to 0} \frac{\{x \in B_r(\bar x): \phi(x) \le \gamma_a(t-) - 2\delta\}}{r^2}\ge c\delta^2.
\end{equation*}
This condition excludes that $\bar x$ is a point of vanishing mean oscillation of $\phi$, therefore $\bar x \in \Sigma$ by Theorem \ref{T_intro}, namely \eqref{E_beinJ} holds true.
\end{proof}

\subsection{Conclusion}
Collecting the results in Sections \ref{Ss_nu_l} and \ref{Ss_nu_j} we obtain the rectifiability of the measure $(p_x)_\sharp U^-_\phi$.
\begin{proposition}
The measure $(p_x)_\sharp U^-_\phi$ is 1-rectifiable.
\end{proposition}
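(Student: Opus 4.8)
The plan is to combine Propositions \ref{P_l} and \ref{P_j} through the pairing $\pi^-$, after checking that $\pi^-$ is concentrated, up to null sets, on the finitely many pieces $\G^-_{h,\mathrm{jump}}\times\G^+_e$, $\G^-_h\times\G^+_{e,\mathrm{jump}}$ and $\G^-_{h,l}\times\G^+_{e,l}$ ($l=0,\dots,L$) that define $\pi^-_{\mathrm{jump}}$ and the $\pi^-_l$'s. First I would record the identity
\begin{equation*}
(p^1_x)_\sharp\pi^- = (p_x)_\sharp\big(U^-_\phi\llcorner(B_R\times[0,M])\big),
\end{equation*}
which follows from the first marginal in \eqref{E_marginals2}, the identity $(p_{2,3})_\sharp(\omega_h\otimes\mu^-_\gamma)=\L^1\times U^-_\phi$ established in the proof of Lemma \ref{L_pairing}, and $\L^1((0,1))=1$. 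Since $|U_\phi|$ is finite and $\Omega$ is covered by countably many balls satisfying the hypotheses of Theorem \ref{T_Lagrangian} — for every $y\in\Omega$ the sphere $\partial B_\rho(y)$ meets the non-Lebesgue set of $\phi$ in an $\H^1$-negligible set for $\L^1$-a.e.\ $\rho<\dist(y,\partial\Omega)$, because that set has Hausdorff dimension at most $1$ by \cite{M_Lebesgue} — it suffices to show that $(p^1_x)_\sharp\pi^-$ is concentrated on a countably $\H^1$-rectifiable subset of $B_R$.

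For the covering of $\pi^-$: since $\pi^-$ is concentrated on $\G$, at $\pi^-$-a.e.\ point $\mathcal Z$ one has $a=a'$ and $\gamma_x(t)=\gamma'_x(t')$, while its marginals are concentrated on $\G^-_h$ and $\G^+_e$ respectively. If $\gamma_a(t+)<\gamma_a(t-)$ then $\mathcal Z\in\G^-_{h,\mathrm{jump}}\times\G^+_e$; if $\gamma'_a(t'-)<\gamma'_a(t'+)$ then $\mathcal Z\in\G^-_h\times\G^+_{e,\mathrm{jump}}$; otherwise $\gamma_a$ and $\gamma'_a$ are continuous at $t=t'$ with common value $a=a'\in[0,M]=\bigcup_l I_l$, so $\mathcal Z\in\G^-_{h,l}\times\G^+_{e,l}$ for some $l$. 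Hence $\pi^-\le\pi^-_{\mathrm{jump}}+\sum_{l=0}^L\pi^-_l$ as measures, and consequently
\begin{equation*}
(p^1_x)_\sharp\pi^- \le \nu^-_{\mathrm{jump}}+\sum_{l=0}^L\nu^-_l.
\end{equation*}

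Finally I would invoke the structure of the summands. By Proposition \ref{P_l} each $\nu^-_l$ is concentrated on $\bigcup_{\bar x\in\Q^2\cap B_R}C_{f_{\bar x,l}}$, a countable union of Lipschitz graphs and thus countably $\H^1$-rectifiable; by Proposition \ref{P_j}, together with \eqref{E_Sigma} and Theorem \ref{T_intro}(1), the measure $\nu^-_{\mathrm{jump}}$ is concentrated on the countably $\H^1$-rectifiable set $\Sigma$. A finite sum of measures each concentrated on a countably $\H^1$-rectifiable set is concentrated on the union of those sets, which is again countably $\H^1$-rectifiable; hence $(p^1_x)_\sharp\pi^-$, and with it $(p_x)_\sharp(U^-_\phi\llcorner(B_R\times[0,M]))$, is $1$-rectifiable, and the covering reduction above upgrades this to $(p_x)_\sharp U^-_\phi$ on $\Omega$. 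The only delicate point is the bookkeeping in the middle paragraph: one must verify that the diffuse negative part of $\mu^-_\gamma$, not just its atoms, is absorbed by the listed pieces of $\G$. This is immediate from the definition of $\mu_\gamma$, since $\mu^-_\gamma$ charges only triples $(t,\gamma_x(t),a)$ at which $\gamma_a$ jumps down (so $a\in(\gamma_a(t+),\gamma_a(t-))$, placing $\mathcal Z$ in the jump piece) or decreases diffusely (so $\gamma_a(t-)=\gamma_a(t+)$, a continuity point whose value $a$ lies in some $I_l$).
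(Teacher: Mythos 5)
Your proof is correct and takes essentially the same route as the paper: the core step is the covering $\G \subset (\G^-_{h,\mathrm{jump}} \times \G^+_e) \cup (\G^-_h \times \G^+_{e,\mathrm{jump}}) \cup \bigcup_{l}(\G^-_{h,l} \times \G^+_{e,l})$, yielding $\pi^- \le \pi^-_{\mathrm{jump}} + \sum_l \pi^-_l$, after which Propositions \ref{P_l} and \ref{P_j} finish. Your additional bookkeeping — the explicit identity $(p^1_x)_\sharp\pi^- = (p_x)_\sharp(U^-_\phi\llcorner(B_R\times[0,M]))$, the case analysis verifying the covering of $\G$, and the reduction from $\Omega$ to countably many good balls $B_R$ satisfying the hypotheses of Theorem \ref{T_Lagrangian} — is all sound and makes explicit steps that the paper leaves tacit.
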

\begin{proof}
We first observe that since $\pi^-$ is concentrated on $\G$ and 
\begin{equation*}
\G \subset  \big(\G^-_{h,\mathrm{jump}} \times \G^+_e\big) \cup \big(\G^-_h \times \G^+_{e,\mathrm{jump}}\big) \cup 
\left( \bigcup_{l=0}^L  \left(  \G^-_{h,l} \times \G^+_{e,l} \right) \right),
\end{equation*}
then it follows from the definitions of $\pi^-_l$ and $\pi^-_{\mathrm{jump}}$ that 
\begin{equation*}
\pi^- \le \pi^-_{\mathrm{jump}}+ \sum_{l=0}^L \pi^-_l.
\end{equation*}
In particular
\begin{equation*}
(p_x)_\sharp U^-_\phi = (p^1_x)_\sharp \pi^- \le (p^1_x)_\sharp  \pi^-_{\mathrm{jump}} +  \sum_{l=0}^L  (p^1_x)_\sharp\pi^-_l.
\end{equation*}
Since $(p^1_x)_\sharp \pi^-_l$ is 1-rectifiable for every $l=0,\ldots,L$ by Proposition \ref{P_l} and $(p^1_x)_\sharp  \pi^-_{\mathrm{jump}}$ is 1-rectifiable by Proposition \ref{P_j} and Theorem \ref{T_intro}, then also $(p_x)_\sharp U^-_\phi$ is 1-rectifiable.
\end{proof}

As mentioned at the beginning of this section, the rectifiability of the positive part $(p_x)_\sharp U^+_\phi$ can be proven following the same 
procedure. Therefore this concludes the proof of Theorem \ref{T_main}.

%
%
%
%
%\bibliographystyle{alpha}
%
%\bibliography{Biblioteca}
%

\end{document}